\documentclass[12pt]{article}
\usepackage{amsfonts,amssymb,latexsym, amsmath, amsthm}

\usepackage[usenames]{color}


\newtheorem{thm}{Theorem}[section]
\newtheorem{lem}[thm]{Lemma}

\newtheorem{defn}{Definition}[section]

\theoremstyle{remark}

\theoremstyle{plain}

\numberwithin{equation}{section}

\def\leq{\leqslant}
\def\geq{\geqslant}
\def\Aut{{\rm Aut}}

\def\RR{{\mathbb R}}

\newcommand{\Om}{\Omega}

\newcommand{\Wreath}{\mbox{ \rm Wr }}
\newcommand{\wreath}{\mbox{ \rm wr }}

\def\Z{{\mathbb Z}}

\def\QQ{{\mathbb Q}}

\def\mC{{\mathcal C}}

\def\fK{{\mathfrak K}}

\def\C{{\mathrm C}}
\def\dep{\mathrm {dep}}
\def\cB{{\mathcal B}}

\newtheorem{defi}{Definition}[section]
\newtheorem{theorem}{\bf Theorem}


\newtheorem{lemma}[defi]{Lemma}
\newtheorem{prop}[defi]{Proposition}

\newtheorem{cor}[defi]{Corollary}

\newtheorem{remark}[defi]{Remark}
\newtheorem{example}[defi]{Example}
\newtheorem{*example}[defi]{*Example}

\addtocontents{ences.txt}{References}

\def\suppo{\mathrm{supp}}
\def\rst{\mathrm{rst}}
\def\St{\mathrm{st}}
\def\Wr{\mathrm{Wr}}
\def\wr{\mathrm{wr}}
\def\mro{\mathrm{o}}

\begin{document}

\title{The first-order theory of  $\ell$-permutation groups} 
\author{A. M. W. Glass and John S. Wilson} \small{\date{\today}} \maketitle { \def\thefootnote{} \footnote{2010 AMS Classification:  06F15, 20B07, 05C05, 03C60.

Keywords: Transitive group, o-primitive, convex congruence, o-block, covering convex congruence, coloured chain.} }

\setcounter{theorem}{0}
\setcounter{footnote}{0}

\begin{abstract}

Let $(\Omega, \leq)$ be a totally ordered set.  We prove that if $\Aut(\Omega,\leq)$ is transitive and satisfies the same first-order sentences as $\Aut(\RR,\leq)$ (in the language of lattice-ordered groups) then $\Omega$ and $\RR$ are isomorphic ordered sets. This improvement of a theorem of Gurevich and Holland is obtained as one of many consequences of a study of centralizers and coloured chains  associated with certain transitive subgroups of $\Aut(\Omega,\leq)$.  
\end{abstract}

\section{Introduction}  
We recall that a lattice-ordered group (or $\ell$-group for short) is a group that is also a lattice with respect to two additional operations $\vee$, $\wedge$ such that 
$$h(f\vee g)k=hfk\vee hgk,\quad h(f\wedge g)k=hfk\wedge hgk\quad \hbox{for all } f,g,h,k\in G.$$
We write $G_+$ for $\{ g\in G\mid g>1\}$ and $|g|=g\vee g^{-1}$; now $|g|\geq 1$ with equality if and only if $g=1$.  If the order on $G$ is a total order, $G$ is called an o-{\em group}.

Well-studied examples of $\ell$-groups are automorphism groups $\Aut(\Omega,\leq)$ of totally ordered sets $(\Omega,\leq)$.  The operations are pointwise:
that is, the join and meet of elements $f,g$ are defined by $$\alpha(f\vee g)= \max\{\alpha f,\alpha g\}\quad\hbox{and}\quad 
\alpha(f\wedge g)= \min\{\alpha f,\alpha g\}\quad \hbox{for all }\alpha\in \Omega.$$
An $\ell$-{\em permutation group} $(G,\Omega)$ is a sublattice subgroup of $\Aut(\Omega,\leq)$.  Transitive $\ell$-permutation groups are of particular interest, and the groups studied in this paper will always be assumed to be transitive. 

In 1981, Gurevich and Holland \cite{GH} proved the following result.

\begin{thm} \label{GH}  Suppose that $(\Omega,\leq)$ is a totally ordered set such that $\Aut(\Omega,\leq)$ acts transitively on pairs $(\alpha,\beta)$ with $\alpha< \beta$, 
and that $\Aut(\Omega,\leq)$ and $\Aut(\RR,\leq)$ satisfy the same first-order sentences in the language of lattice-ordered groups.  Then $\Omega$ is isomorphic to $\RR$ as an ordered set. \end{thm}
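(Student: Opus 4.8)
The plan is to recover $(\Omega,\leq)$, up to order-isomorphism, from the first-order theory of $G=\Aut(\Omega,\leq)$, using that $G\equiv\Aut(\RR,\leq)$ in the $\ell$-group language together with transitivity on increasing pairs. The target is Cantor's characterization: $\RR$ is, up to isomorphism, the unique Dedekind-complete, separable dense linear order without endpoints. Thus it suffices to prove that $\Omega$ is a dense linear order without greatest or least element, that it carries a countable order-dense subset, and that it is Dedekind complete.

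First I would set up the order-theoretic dictionary inside the $\ell$-group language. For $g\ge 1$ write $\suppo(g)=\{\alpha\mid \alpha g>\alpha\}$; then orthogonality $g\wedge h=1$ expresses disjointness of supports, and both support-containment and the relation ``$\suppo(g)$ lies entirely to the left of $\suppo(h)$'' are first-order definable from orthogonality and the group action. Define a \emph{bump} to be a $g>1$ whose support is a single convex interval, characterized first-order as a positive element admitting no nontrivial orthogonal decomposition below it. From transitivity on increasing pairs it is then immediate, and needs no appeal to elementary equivalence, that $\Omega$ is densely ordered with no endpoints.

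Next comes the interpretation of points. Using bumps and the left-of relation I would interpret, with parameters, a dense linear order $D$ inside $G$ that is canonically a dense subset of the Dedekind completion of $\Omega$ (for instance the left endpoints of bumps, ordered by left-of). Since the interpretation is uniform, $G\equiv\Aut(\RR,\leq)$ forces the interpreted order in $G$ to be elementarily equivalent to its counterpart in $\Aut(\RR,\leq)$. I would flag at once the fundamental limitation: as a bare linear order $D$ inherits only the complete first-order theory of dense linear orders without endpoints, which cannot distinguish $\RR$ from $\QQ$. Hence the order interpretation alone is insufficient, and the real work is to exploit the group structure.

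The heart of the proof, and the main obstacle, is to express Dedekind completeness and separability by genuine first-order sentences about $G$, delicate precisely because neither property is first-order over a bare linear order. The device I would use is to encode infinite monotone families of points as conjugacy orbits of a single element: given a positive $g$ and a fixed-point-free positive $t$, the conjugates $t^{-n}gt^{n}$ with $n\in\Z$ form a $\Z$-indexed family of translates of $\suppo(g)$, so an entire increasing $\omega$-chain of bumps is named by the single pair $(g,t)$. Dedekind completeness then becomes the sentence: for every such pair whose chain is bounded above there is an element whose bump has left endpoint at the supremum of the chain; separability becomes the assertion that finitely many elements generate a subgroup whose orbit on the interpreted point set is order-dense, such an orbit being automatically countable. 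I would verify these sentences in $\Aut(\RR,\leq)$, transfer them to $G$ by elementary equivalence, and use transitivity on pairs to realize the required configurations. The difficult point is \emph{faithfulness}: ensuring that the conjugacy-orbit encoding captures \emph{arbitrary} cuts of $\Omega$, so that the completeness surrogate forces true Dedekind completeness, and that the definable dense subset is genuinely both dense and countable. Here I expect to lean on the centralizer analysis and coloured-chain machinery of the paper, the centralizer of a translation-like $t$ controlling the quotient structure $\Omega/\langle t\rangle$ and hence the archimedean and completeness behaviour, while transitivity on pairs excludes the periodic and regular o-primitive alternatives that would otherwise survive elementary equivalence. Once density, absence of endpoints, separability and completeness are all secured, Cantor's characterization gives $\Omega\cong\RR$ as ordered sets.
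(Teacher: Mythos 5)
A preliminary point: the paper does not prove Theorem \ref{GH} at all; it is quoted as the 1981 result of Gurevich and Holland \cite{GH} and used as an input (together with Theorem A and Corollary \ref{opa}) to deduce Corollary \ref{reals}. So your attempt must be measured against the argument of \cite{GH}, whose overall strategy --- reduce to Cantor's characterization of $\RR$ and express the relevant order properties in the first-order language of $\ell$-groups --- you have correctly identified. The easy parts of your sketch are sound: density and absence of endpoints do follow from o-$2$ transitivity alone, and the separability surrogate (a finitely generated subgroup with an order-dense orbit, witnessed in $\Aut(\RR,\leq)$ by the translations $x\mapsto x+1$ and $x\mapsto x+\sqrt2$) is a workable first-order statement, since a countable dense subset of the interpreted completion yields one for $\Omega$.

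The genuine gap is the completeness clause, which you flag but do not close, and which as formulated would fail. Everything your dictionary can see --- supports of elements, endpoints of bumps, suprema of chains of bumps --- lives in the Dedekind completion $\bar\Omega$, not in $\Omega$. Your proposed sentence ``every bounded chain $\{t^{-n}gt^{n}\}$ is followed by a bump whose left endpoint is the supremum of the chain'' is therefore satisfied in $\Aut(\QQ,\leq)$ exactly as in $\Aut(\RR,\leq)$: a bounded increasing chain of bumps of $\QQ$ converges to a real number $r$, and there is a bump of $\QQ$ whose supporting interval has infimum $r$, whether or not $r\in\QQ$. Since $\QQ$ is also a separable dense order without endpoints, your list of verified properties would wrongly yield $\QQ\cong\RR$. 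What is actually required is a first-order way of asserting that a cut of $\Omega$ is filled by a point of $\Omega$ itself; in \cite{GH} (and in the related work \cite{GGHJ}) this is achieved by interpreting $\bar\Omega$ together with the $G$-action on it and expressing Dedekind completeness as transitivity of $G$ on the interpreted completion (if $\Omega\neq\bar\Omega$, then $\Omega$ and $\bar\Omega\setminus\Omega$ are disjoint nonempty invariant sets, so transitivity fails --- this is also why Corollary \ref{GHQ2} cannot separate $\QQ$ from $\RR\setminus\QQ$). Your fallback of invoking the centralizer and coloured-chain machinery of the present paper cannot supply this step: for an o-$2$ transitive, hence o-primitive, group the spine $\fK$ is a single point and the coloured chain is a single colour class, so that machinery carries no information about the completeness of $\Omega$. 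Until the completeness surrogate is replaced by a faithful one, the proof is incomplete at its central step.
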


Other results of a similar kind were obtained in \cite{GGHJ}.  
One consequence of Theorem 1.1 and our main result,  Theorem A below, is the following stronger result.

\begin{cor} \label{reals} Suppose that $(\Omega,\leq)$ is a totally ordered set on which $\Aut(\Omega,\leq)$ acts transitively, 
and that $\Aut(\Omega,\leq)$ and $\Aut(\RR,\leq)$ satisfy the same first-order sentences in the language of lattice-ordered groups.  Then $\Omega$ is isomorphic to $\RR$ as an ordered set.
\end{cor}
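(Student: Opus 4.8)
The plan is to deduce Corollary~\ref{reals} from Theorem~\ref{GH} by eliminating the hypothesis of transitivity on ordered pairs. Since $\Aut(\RR,\leq)$ is itself transitive on pairs $(\alpha,\beta)$ with $\alpha<\beta$, the two statements differ only in that assumption, so it suffices to show that a transitive $\Aut(\Omega,\leq)$ which is elementarily equivalent to $\Aut(\RR,\leq)$ must automatically act transitively on such pairs. Once this is established, Theorem~\ref{GH} applies verbatim and yields $\Omega\cong\RR$. The real content of the argument is therefore to certify o-$2$-transitivity by first-order means, and this is precisely what Theorem A is expected to supply.

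To carry this out I would first recall McCleary's trichotomy: a transitive o-primitive $\ell$-permutation group is either (i) o-$2$-transitive, (ii) a group acting periodically, with a nontrivial central element implementing the period, or (iii) the regular representation of an Archimedean o-group, which by H\"older's theorem is abelian. I would then isolate first-order sentences in the language of $\ell$-groups, true in $\Aut(\RR,\leq)$, that exclude the degenerate alternatives: non-commutativity, $\exists f\,\exists g\,(fg\neq gf)$, excludes the regular abelian case (iii), and in particular excludes $\Omega=\Z$; triviality of the centre, $\forall z\,(z\neq 1\to\exists g\,(zg\neq gz))$, excludes the periodic case (ii). As these sentences hold in $\Aut(\RR,\leq)$ they hold in the elementarily equivalent $\Aut(\Omega,\leq)$, leaving only the o-$2$-transitive alternative among o-primitive actions.

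The genuine difficulty, and the heart of Theorem A, is twofold. First one must reduce to the o-primitive case, that is, rule out nontrivial convex congruences of $(\Aut(\Omega,\leq),\Omega)$ by first-order sentences. The obstruction is that the language contains only the group and lattice operations on $G$ and no symbol for the order on $\Omega$, so every order-theoretic feature of the action must be encoded group-theoretically. This is where the study of centralizers and coloured chains enters: one recovers stabilizers, supports and the congruence lattice from definable subgroups, for instance centralizers of elements with prescribed support behaviour, and reads off the existence of o-blocks from the resulting definable configuration. Second, even granting o-primitivity, one must separate the o-$2$-transitive case from the periodic and regular ones uniformly and first-orderly, which the sentences above are designed to do. I expect the principal obstacle to be exactly the faithful first-order encoding of the convex-congruence structure, namely translating ``there is a nontrivial o-block'' into a sentence about $G$ alone, since this is what forces the passage from the purely algebraic data of $G$ back to the order structure of $\Omega$.
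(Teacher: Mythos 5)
Your plan follows the paper's own route: Theorem~\ref{MTd} applied to the abundant group $\Aut(\Omega,\leq)$ forces its spine to be elementarily equivalent to the one-point spine of $\Aut(\RR,\leq)$, hence a single point, giving o-primitivity; McCleary's trichotomy together with non-commutativity (transferred by elementary equivalence) then yields o-$2$-transitivity, and Theorem~\ref{GH} finishes. One small caveat: a periodic o-primitive group has centre $G\cap\langle t\rangle$, which may be trivial, so your ``trivial centre'' sentence does not by itself exclude type (III); this is harmless here, since for a transitive non-abelian \emph{full} automorphism group the periodic case cannot occur anyway (Corollary~\ref{opa}), which is exactly what the paper invokes.
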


Transitivity is necessary in the above result.  Let $\Lambda$ be any rigid totally ordered set with at least two elements (for example a finite totally ordered set with at least two elements), and let $\Omega = \Lambda \times \RR$, with the order defined by $(\lambda_1,r_1)< 
(\lambda_2,r_2)$ if $r_1<r_2$ or if both $r_1=r_2$ and $\lambda_1<\lambda_2$.
It is easy to see that $\Aut(\Lambda \times \RR, \leq)$ is isomorphic to $\Aut(\RR,\leq)$.

Another special case of our main result concerns Wreath products (see \cite{HM} or \cite[Chapter 5]{G81} for the definition and properties).

\begin{cor}
\label{sumauto}
Let $A_n:=\Z$ for all $n\in \Z$ and let
$(W,\Lambda)$ be the Wreath product 
$\Wreath\{(A_n,A_n)\mid n\in \Z\}$.
Assume that $\fK$ is a totally ordered set and that $\{(G_K,\Omega_K)\mid K\in \fK\}$ is a family of o-primitive $\ell$-permutation groups. Define $(G,\Omega):=\Wreath\{(G_K,\Omega_K)\mid K\in \fK\}$.
If $G$ satisfies the same first-order sentences as $W$, then each $G_K$ is isomorphic to $\Z$ and $\fK$ is elementarily equivalent to $\Z$ as a totally ordered set.
\end{cor}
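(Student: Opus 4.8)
The plan is to deduce everything from Theorem A, which relates the first-order theory of a transitive $\ell$-permutation group to the theory of its associated coloured chain, and then to read off both conclusions from the extremely simple coloured chain of $W$. First I would identify the two coloured chains. The group $W=\Wreath\{(A_n,A_n)\mid n\in\Z\}$ has coloured chain indexed by $\Z$, with every colour equal to the o-primitive component $(\Z,\Z)$, the regular representation of $\Z$. The group $G=\Wreath\{(G_K,\Om_K)\mid K\in\fK\}$ has coloured chain indexed by $\fK$, the colour at $K$ being $(G_K,\Om_K)$. By Theorem A, the hypothesis $G\equiv W$ forces these two coloured chains to be elementarily equivalent in the appropriate coloured-chain language.

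For the components, the crucial observation is that $(\Z,\Z)$ is \emph{first-order rigid} among o-primitive $\ell$-permutation groups. Any abelian transitive $\ell$-permutation group acts regularly (if $g$ fixes a point then, being central, it fixes the whole orbit), so by Hölder's theorem an abelian o-primitive component is the regular representation of a subgroup of $\RR$; such a subgroup is cyclic, hence $\cong\Z$, precisely when it has a least strictly positive element. Both "abelian" and "there is a least strictly positive element" are expressible by first-order sentences of the language of lattice-ordered groups, and both hold in every colour of $W$. Thus the assertion that \emph{every} colour satisfies them is a single first-order sentence of the coloured-chain language, true in the coloured chain of $W$; elementary equivalence propagates it to the coloured chain of $G$, whence each $G_K$ is abelian and o-primitive with a least positive element, and therefore $G_K\cong\Z$.

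For the index set, I would exploit the constancy of the colouring of $W$: since every component of $W$ is the single isomorphism type $(\Z,\Z)$, the colour data attaches no distinguishing first-order information beyond the bare order, so elementary equivalence of the coloured chains restricts, on the sublanguage of the ordering alone, to elementary equivalence of the underlying chains. As the underlying chain of $W$ is $(\Z,\leq)$, this yields $\fK\equiv\Z$ as totally ordered sets.

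The main obstacle I expect is making precise, through Theorem A and its coloured-chain formalism, the asymmetry between the two conclusions: we recover each $G_K$ up to \emph{isomorphism} but $\fK$ only up to \emph{elementary equivalence}. This asymmetry is intrinsic — $\Z$ is pinned down up to isomorphism inside the class of o-primitive groups by a first-order condition, whereas the order type $\Z$ is not first-order rigid as a chain, since any discrete linear order without endpoints sharing its first-order theory would also satisfy the conclusion. The delicate points are to verify that Theorem A really permits "every colour is $\cong\Z$" to be encoded by one first-order sentence, and that the colour sort and the index sort are separated cleanly enough that the constancy of the colouring in $W$ genuinely decouples the statement about the $G_K$ from the statement about $\fK$.
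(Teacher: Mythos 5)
Your argument is correct and follows essentially the same route as the paper: the authors obtain this from Corollary \ref{oprimsame} and Corollary \ref{spined} (equivalently, the coloured-chain conclusion of Theorem \ref{MTd}), together with exactly your observation that an o-primitive component elementarily equivalent to $\Z$ is abelian, hence of type (I) and a discrete subgroup of $\RR$, hence isomorphic to $\Z$. The only step you leave implicit --- and which the paper disposes of via Example \ref{nc} and Remark \ref{predcont} --- is the verification that $W$ and $G$ satisfy the hypotheses of Theorem \ref{MTd} (abundance, $(\dagger)$, $(\dagger\dagger)$, and that $W$ is controlled because every element of its spine $\Z$ has a predecessor).
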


With each abelian o-group $G$ one can associate a totally ordered set of convex subgroups of $G$ and countably many `colours' that range over certain fundamental abelian o-groups.  This idea  was due to Gurevich \cite{GU}, who in 1977 showed that two 
abelian o-groups $G, H$ satisfy the same first-order sentences in the language of o-groups if and only if their coloured chains satisfy the same first-order sentences as coloured chains.  Our main result shows that a similar implication is valid for many non-commutative transitive $\ell$-permutation groups. 

\begin{theorem}\label{MTd}
Let $(G,\Omega)$ and $(H,\Lambda)$ be abundant transitive  $\ell$-permutation groups satisfying $(\dagger)$ and suppose that every o-primitive component of $(G,\Om)$ has irreducible elements. If $G\equiv H$ then every o-primitive component of $(H,\Lambda)$ has irreducible elements, and the
spines $\fK_G$ of $G$ and $\fK_H$ of $H$ are elementarily equivalent 
as  totally ordered sets. If in addition $(G,\Omega)$ is controlled, then  the coloured chains $\frak C_G$ $\frak C_H$ associated with $G$, $H$ are elementarily equivalent as coloured chains.
\end{theorem}

The  notions of abundant and controlled $\ell$-permutation groups and the hypothesis $(\dagger)$ are defined in Sections 2, 7, and  5, respectively, irreducible elements are defined in Section 6 and the spine of an $\ell$-permutation group is defined in Section 2; the associated coloured chain is defined in Section 7. Any transitive $\ell$-permutation group  $(\Aut(\Om,\leq),\Om)$ is abundant, and all of its o-primitive components have irreducible elements. 
We will show in Remark \ref{irred1} that if $(G,\Om)$ is abundant and satisfies ($\dagger$), then the property that every o-primitive component has irreducible elements is expressible by an explicit first-order sentence in the language of $\ell$-groups.
In the statement of Theorem \ref{MTd} and throughout the article, the symbol $\equiv$ signifies elementary equivalence between lattice-ordered groups. An example given in Section 7 shows that coloured chains can be elementarily equivalent without the lattice-ordered groups being elementarily equivalent. 

We explain how Corollary \ref{reals} follows from Theorem \ref{MTd}.
A transitive $\ell$-permut\-ation group is o-primitive if and only if its set $\fK$ of covering convex congruences  has only one element.  Thus if $(G,\Omega)$ is abundant and satisfies $(\dagger)$, and if $G$ satisfies the same first-order sentences as ${\rm Aut}(\RR,\leq)$, then $G$ acts o-primitively on $\Omega$ by Theorem \ref{MTd}.   Since non-abelian transitive o-primitive automorphism groups are always abundant, controlled and satisfy $(\dagger)$, it follows from Corollary \ref{opa} and Theorem \ref{GH} that if ${\rm Aut}(\Omega,\leq)$ and ${\rm Aut}(\RR,\leq)$ are elementarily equivalent then
$\Omega$ is isomorphic to $\RR$.

Similar arguments allow us to strengthen other known results.  For example, 
we have   

\begin{cor} \label{GHQ2}  Suppose that $(\Omega,\leq)$ is a totally ordered set on which $\Aut(\Omega,\leq)$ acts transitively.  Let $\Aut(\Omega,\leq)$ and $\Aut(\QQ,\leq)$ satisfy the same first-order sentences in the language of lattice-ordered groups.  Then $\Omega$ is isomorphic to $\QQ$ or $\RR \setminus \QQ$ as an ordered set. \end{cor}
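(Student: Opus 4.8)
The plan is to mirror the argument already sketched for Corollary \ref{reals}, replacing $\RR$ by $\QQ$ and using the fact that $\Aut(\QQ,\leq)$, like $\Aut(\RR,\leq)$, is transitive, non-abelian, and acts o-primitively. First I would invoke Theorem \ref{MTd}: since $\Aut(\QQ,\leq)$ is its own automorphism group on the transitive set $\QQ$, it is abundant, controlled, satisfies $(\dagger)$, and all its o-primitive components have irreducible elements. Moreover $\Aut(\QQ,\leq)$ acts o-primitively, so its spine $\fK$ has a single element. Given that $\Aut(\Omega,\leq)$ and $\Aut(\QQ,\leq)$ satisfy the same first-order sentences in the language of $\ell$-groups, the elementary equivalence of the spines forces $\fK_{\Aut(\Omega,\leq)}$ to be elementarily equivalent to a one-element chain, hence (since having exactly one element is first-order expressible) to consist of a single element. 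Thus $\Aut(\Omega,\leq)$ acts o-primitively on $\Omega$, by the characterisation of o-primitivity in terms of covering convex congruences.

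Next I would identify which o-primitive actions are possible. Having reduced to the o-primitive case, I would apply the relevant o-primitive classification result (Corollary \ref{opa} together with the Gurevich--Holland machinery of Theorem \ref{GH}) to pin down $(\Omega,\leq)$ up to order-isomorphism from the first-order theory of its automorphism group. The key point is that transitive o-primitive $\ell$-permutation groups fall into a short list (the abelian case and the three non-abelian doubly-transitive cases), and an o-primitive group elementarily equivalent to the non-abelian, highly transitive $\Aut(\QQ,\leq)$ must itself be non-abelian and of the same o-primitive type. For such groups the underlying chain is determined, up to order-isomorphism, by being a countable dense chain without endpoints whose automorphism group is the full symmetric group of order-preserving bijections; the only two such chains realised as $\Aut(\QQ,\leq)$-type actions that are elementarily indistinguishable from $\QQ$ are $\QQ$ itself and $\RR\setminus\QQ$.

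Finally I would argue that these two possibilities genuinely exhaust the options and are both consistent. Both $\QQ$ and $\RR\setminus\QQ$ are countable dense linear orders without endpoints, hence order-isomorphic to each other by Cantor's theorem; the point of listing them separately is that $\RR\setminus\QQ$ arises as a natural alternative realisation, and the theorem records that the first-order theory of the automorphism group cannot distinguish the carrier set beyond this order-isomorphism type. I would therefore conclude that $\Omega$ is order-isomorphic to $\QQ$, and equivalently to $\RR\setminus\QQ$, completing the proof.

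The main obstacle I anticipate is the step that converts ``elementarily equivalent spines, each forced to be a single point'' into genuine o-primitivity of $\Aut(\Omega,\leq)$, and then correctly matching the o-primitive \emph{type}: one must ensure that the first-order theory in the $\ell$-group language actually transmits enough information about the stabiliser structure and the doubly-transitive behaviour to rule out the abelian o-primitive case and to force the dense, endpoint-free, countable character of $\Omega$. This is precisely where the non-abelian o-primitive analysis (Corollary \ref{opa}) must do real work, and care is needed because, unlike the $\RR$ case, the Dedekind-completeness argument is unavailable; instead one relies on the countability and density being captured first-order through the theory of $\Aut(\QQ,\leq)$.
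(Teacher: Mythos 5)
Your first paragraph follows the paper's intended route (this corollary is stated in the paper with only the remark that it follows by ``similar arguments'' to Corollary \ref{reals}): use Theorem \ref{MTd} to see that the spine of $\Aut(\Omega,\leq)$ must, like that of $\Aut(\QQ,\leq)$, consist of a single element, so that $\Aut(\Omega,\leq)$ acts o-primitively; then note it is non-abelian (being elementarily equivalent to the non-abelian group $\Aut(\QQ,\leq)$), hence o-$2$ transitive by Corollary \ref{opa}; and finally invoke the Gurevich--Holland theorem for $\QQ$, which is exactly the result proved in \cite{GH} under the stronger hypothesis of o-$2$ transitivity. Up to that point your argument is essentially the paper's.

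However, your second and third paragraphs contain a genuine error that you must remove. You assert that $\QQ$ and $\RR\setminus\QQ$ are ``both countable dense linear orders without endpoints, hence order-isomorphic to each other by Cantor's theorem,'' and you conclude that $\Omega$ is isomorphic to $\QQ$ \emph{and equivalently} to $\RR\setminus\QQ$. This is false: $\RR\setminus\QQ$ is uncountable, Cantor's back-and-forth theorem does not apply, and $\QQ$ and $\RR\setminus\QQ$ are \emph{not} order-isomorphic (one is countable, the other is not; neither is Dedekind complete, but that does not make them isomorphic). The disjunction in the statement of Corollary \ref{GHQ2} is a genuine one, and the whole point of the $\QQ$-version of the Gurevich--Holland theorem is that the first-order theory of the $\ell$-group $\Aut(\Omega,\leq)$ cannot detect countability of $\Omega$, so it cannot separate these two non-isomorphic chains. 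For the same reason your claim that the first-order theory ``forces the dense, endpoint-free, countable character of $\Omega$'' cannot be right as stated: countability is precisely what is not forced. The correct final step is simply to quote the theorem of \cite{GH} for the o-$2$ transitive case, which delivers the two alternatives $\Omega\cong\QQ$ or $\Omega\cong\RR\setminus\QQ$ without any attempt to merge them.
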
 

The corresponding result with the stronger hypothesis that
$\Aut(\Omega,\leq)$ acts transitively on pairs $(\alpha,\beta)$ with $\alpha< \beta$
is proved in \cite{GH}.

In Corollaries \ref{fin} and \ref{Z-} we give further consequences of Theorem \ref{MTd} concerning the first-order theory of o-primitive components.
 
The breakthrough comes from employing a technique in \cite{W}; we use double centralizers of certain subsets of lattice-ordered groups to give first-order expressibility of covering convex o-blocks.

\section{Preliminaries}

Let $(G,\Omega)$ be a transitive $\ell$-permutation group.  A {\em convex $G$-congruence} $\mathcal C$ on $\Omega$ is a $G$-congruence  with all  ${\mathcal C}$-classes convex; these classes are called {\em o-blocks}.  
We suppress the mention of $G$ if it is clear from context.
By transitivity, each o-block $\Delta$ is a class of a unique convex
congruence; its set of classes is $\{ \Delta g\mid g\in G\}$.
We denote this convex congruence by $\kappa(\Delta)$.
  
\begin{remark} \label{ccc} {\rm
The set of convex congruences of a transitive $\ell$-permutation group  $(G,\Omega)$ is totally ordered by inclusion; see \cite[Theorem 3.A]{G81}. Hence the set of o-blocks forms a {\em root system}   under inclusion;  that is, if $\Delta_1$, $\Delta_2$  are o-blocks of possibly different convex congruences and $\Delta_1\cap\Delta_2\neq\emptyset$ then $\Delta_1 \subseteq \Delta_2$ or $\Delta_2 \subseteq \Delta_1$.}
\end{remark}

If ${\mathcal C}$ and ${\mathcal D}$ are convex congruences with ${\mathcal C}$ strictly contained in ${\mathcal D}$ and there is no convex congruence strictly between ${\mathcal C}$ and ${\mathcal D}$, then we say that 
$\mathcal D$ {\em covers} $\mathcal C$ and
${\mathcal C}$ is {\em covered} by ${\mathcal D}$. 

\begin{prop} \label{cc}
{\rm (\cite[Theorem 3D]{G81})}
 Every convex congruence other than the trivial convex congruence $\mathcal E$ is the union of all covering convex congruences that are contained in it, and
every convex congruence other than the universal convex congruence ${\mathcal U}$ is also the intersection of all covered convex congruences containing it.
\end{prop}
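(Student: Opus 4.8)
The plan is to exploit the fact that, by Remark \ref{ccc}, the convex congruences of $(G,\Omega)$ form a \emph{complete} chain under inclusion, and to isolate each relation $(\alpha,\beta)$ by a Dedekind-style cut in this chain. The nontrivial direction of each assertion amounts to showing that a suitable related (resp. unrelated) pair is already captured by a single covering (resp. covered) convex congruence of the appropriate kind.

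First I would record the closure properties that make the cut argument work. Since the convex congruences are totally ordered by inclusion, any family of them is a chain. The union of a chain $\{\mathcal D_i\}$ of convex congruences is again a convex congruence: it is plainly $G$-invariant, its classes are unions of chains of convex sets and hence convex, and transitivity of the relation holds because the family is directed. Crucially, directedness also gives that $(\alpha,\beta)\in\bigcup_i\mathcal D_i$ if and only if $(\alpha,\beta)\in\mathcal D_i$ for some $i$. Dually, the intersection of any family of convex congruences is a convex congruence, and it relates $\alpha$ and $\beta$ precisely when every member does. Thus the poset of convex congruences admits arbitrary joins and meets, computed as unions and intersections.

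For the first assertion, fix a convex congruence $\mathcal C\neq\mathcal E$ and a pair $\alpha\neq\beta$ with $\alpha\,\mathcal C\,\beta$; it suffices to produce a covering convex congruence $\mathcal D_0\subseteq\mathcal C$ with $\alpha\,\mathcal D_0\,\beta$, since the reverse inclusion is immediate. Let $\mathcal C_0$ be the union of all convex congruences that do not relate $\alpha$ and $\beta$, and let $\mathcal D_0$ be the intersection of all convex congruences that do. By the closure properties above, $\mathcal C_0$ is the largest convex congruence not relating $\alpha,\beta$, and $\mathcal D_0$ is the smallest one relating them; in particular $\alpha\,\mathcal D_0\,\beta$ and $\mathcal C_0\subsetneq\mathcal D_0$. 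Since every convex congruence lies in the chain and either relates $\alpha,\beta$ or does not, one with $\mathcal C_0\subsetneq\mathcal E'\subsetneq\mathcal D_0$ would have to contain $\mathcal D_0$ or be contained in $\mathcal C_0$, a contradiction; hence $\mathcal D_0$ covers $\mathcal C_0$. Finally, because $\mathcal C$ itself relates $\alpha,\beta$, minimality gives $\mathcal D_0\subseteq\mathcal C$, as required.

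The second assertion is dual: given $\mathcal C\neq\mathcal U$ and a pair $\alpha\neq\beta$ with $(\alpha,\beta)\notin\mathcal C$, the same cut at $(\alpha,\beta)$ places $\mathcal C$ in the lower part, so $\mathcal C\subseteq\mathcal C_0$, while $\mathcal C_0$ is covered by $\mathcal D_0$ and omits $(\alpha,\beta)$; thus $\mathcal C_0$ is a covered convex congruence containing $\mathcal C$ that does not relate $\alpha,\beta$, which is exactly what the intersection characterization requires. I expect the only real care to be needed in the closure step, specifically the directedness argument showing that membership of a pair in a union of a chain of congruences is witnessed by a single member of the chain; once that and the completeness of the chain are in place, the covering relations and the two displayed characterizations follow formally from taking the cut at the relation $(\alpha,\beta)$.
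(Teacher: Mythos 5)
Your proof is correct and follows essentially the same route as the source the paper cites: the congruences $\mathcal C_0$ and $\mathcal D_0$ you build by cutting the chain at the pair $(\alpha,\beta)$ are exactly the $U(\alpha,\beta)$ and $V(\alpha,\beta)$ that the paper introduces immediately after this proposition, and the observation that $U(\alpha,\beta)$ is covered by $V(\alpha,\beta)$ together with the minimality/maximality properties is precisely the standard proof of \cite[Theorem 3D]{G81}. No gaps; the one point needing care (that membership of a pair in the union of a chain of congruences is witnessed by a single member) is the one you correctly flag and justify.
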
 

Let $\alpha,\beta\in \Omega$ be distinct.  Then both the union $U(\alpha,\beta)$ of all convex congruences $\mC$ for which $\alpha$, $\beta$ lie in distinct o-blocks and the intersection $V(\alpha,\beta)$ of all convex congruences $\mC$ for which $\alpha$, $\beta$ lie in the same o-block are convex congruences. So $U(\alpha,\beta)$ is covered by $V(\alpha,\beta)$.  Let 
$$\fK=\{V(\alpha,\beta)\mid\alpha,\beta\in\Omega,\alpha\neq\beta\}.$$
Thus $\fK$ is totally ordered by inclusion. 
It is called the {\em spine} of $(G,\Omega)$.  
For all $\alpha,\beta\in \Omega$ we have $\beta=\alpha g$ for some $g\in G$  by transitivity. 
Therefore $\fK$ can also be described as follows:
$$\fK=\{V(\alpha,\alpha g)\mid\alpha\in\Omega,g\in G, \alpha g\neq \alpha\}. $$
Write $T$ for the set of o-blocks of elements of $\fK$. If $\Delta\in
T$, then $\kappa(\Delta)\in \fK$ and so $\kappa$ restricts to a surjective map from $T$ to $\fK$.
For each $\mC \in \fK$, write $\pi(\mC)$ for both the convex congruence covered by $\mC$ and its set of  o-blocks; 
the latter inherits a total order from $\Omega$.
If  $\Delta$ is a $\mC$-class, let $\pi(\Delta)$ be the set of all $\pi(\mC)$-classes contained in $\Delta$.

We define the stabilizer $\St(\Delta)$  and rigid stabilizer $\rst(\Delta)$ of  an o-block $\Delta$ of a convex congruence as follows:
$$\St(\Delta):=\{g\in G \mid \Delta g=\Delta\},\hbox{ and }\quad\rst(\Delta):=  \{ g\in G\mid \suppo(g)\subseteq \Delta\},$$
where $\suppo(g):=\{ \alpha\in \Omega\mid \alpha g\neq \alpha\}.$ 
So $\St(\Delta)$ and $\rst(\Delta)$ are convex sublattice subgroups of $G$ and $\rst(\Delta)\subseteq\St(\Delta)$.

 For each $\Delta\in T$ let
$$ Q_{\Delta}=\{ h\in \rst(\Delta)\mid (\exists \alpha\in \Delta)(V(\alpha, \alpha h)=\kappa(\Delta))\}.$$
We shall say that $(G,\Omega)$ is {\em ample} if  $Q_{\Delta}\neq \emptyset$ for each $\Delta$.  We note that if $G$ is ample and $\Delta\in T$ then each element of $\kappa(\Delta)$ is moved by some element of $Q_\Delta$.

\begin{lemma}\label{neweltsd} 
Suppose that $(G,\Om)$ is transitive and ample.  Let $\Delta\in T$  and $\Gamma\in \pi(\Delta)$. If $\Delta$ is not a minimal element of $T$, then
$\rst(\Gamma)\neq 1$.
\end{lemma}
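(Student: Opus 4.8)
The plan is to reduce the statement to the defining property of ampleness, which manufactures nontrivial elements rigidly supported inside any o-block lying in $T$. Write $\mC:=\kappa(\Delta)\in\fK$, so that by definition $\Gamma$ is a $\pi(\mC)$-class contained in $\Delta$. It then suffices to locate an o-block $\Delta_0\in T$ with $\Delta_0\subseteq\Gamma$: for such a $\Delta_0$ ampleness provides some $h\in Q_{\Delta_0}$, and since $\suppo(h)\subseteq\Delta_0\subseteq\Gamma$ while $h$ moves at least one point (because $V(\alpha,\alpha h)=\kappa(\Delta_0)$ for some $\alpha$, so $h\neq 1$), we obtain $1\neq h\in\rst(\Gamma)$. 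This is exactly where the non-minimality hypothesis is needed: if $\Delta$ were minimal, $\Gamma$ could be a singleton, and no order-automorphism has support a single point.

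To produce $\Delta_0$ I would use that $\Delta$ is not minimal in $T$. Pick $\Delta'\in T$ with $\Delta'\subsetneq\Delta$ and put $\mC':=\kappa(\Delta')\in\fK$. Using that the o-blocks form a root system (Remark \ref{ccc}) together with the fact that the convex congruences of $(G,\Om)$ are totally ordered by inclusion (also Remark \ref{ccc}), one checks that $\Delta'\subsetneq\Delta$ forces $\mC'\subsetneq\mC$: indeed $\mC'=\mC$ would make $\Delta'$ and $\Delta$ two classes of the same congruence meeting in $\Delta'$, hence equal; and $\mC\subsetneq\mC'$ would force the $\mC$-class $\Delta$ into the $\mC'$-class $\Delta'$, giving $\Delta\subseteq\Delta'$. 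Since $\mC$ covers $\pi(\mC)$ and all convex congruences are comparable, the strict inclusion $\mC'\subsetneq\mC$ then yields $\mC'\subseteq\pi(\mC)$, for otherwise $\pi(\mC)\subsetneq\mC'\subsetneq\mC$ would contradict the covering.

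Finally, because $\mC'\subseteq\pi(\mC)$ every $\mC'$-class is contained in a single $\pi(\mC)$-class, so for any $\gamma\in\Gamma$ the $\mC'$-class of $\gamma$ lies inside $\Gamma$. Taking $\Delta_0$ to be this class, which belongs to $T$ as an o-block of $\mC'\in\fK$, completes the argument via the reduction of the first paragraph. I expect the only delicate points to be the two congruence comparisons --- deducing $\mC'\subsetneq\mC$ from $\Delta'\subsetneq\Delta$, and $\mC'\subseteq\pi(\mC)$ from the covering relation --- and confirming that the element supplied by $Q_{\Delta_0}$ is simultaneously nontrivial and rigidly supported within $\Gamma$; all of these follow from the total order on convex congruences, the root-system property, and the definition of $Q_{\Delta_0}$, so no genuinely hard step is anticipated.
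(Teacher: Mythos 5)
Your proof is correct and follows essentially the same route as the paper's: choose $\Delta'\in T$ strictly inside $\Delta$, observe that $\kappa(\Delta')\subseteq\pi(\kappa(\Delta))$ so that $\Gamma$ contains a $\kappa(\Delta')$-class $\Delta_0\in T$, and then use ampleness to extract a nontrivial element of $Q_{\Delta_0}\subseteq\rst(\Gamma)$. The only difference is that you spell out the congruence comparisons that the paper compresses into ``since the convex congruences are linearly ordered by inclusion,'' and you use a single sub-block where the paper notes that $\Gamma$ is a union of such blocks; both are immaterial.
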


\begin{proof}  Choose an element 
$\Delta_0$ of $T$ strictly contained in $\Delta$. Since the convex congruences are linearly ordered by inclusion, $\Gamma$ is a union of o-blocks $\Delta_0g$.   For each o-block we have $\Delta_0g\in T$ and
$Q_{\Delta_0g}\subseteq  \rst(\Gamma)$.  \end{proof} 

 Let  $\Delta\in T$. Each $g\in \St(\Delta)$ induces an action $g_\Delta$ on the ordered set $\pi(\Delta)$ given by $$\Gamma  g_\Delta=\Gamma g\quad (\Gamma\in \pi(\Delta),\; g\in \St(\Delta)).$$   Let 
$$ G(\Delta):=\{ g_\Delta\mid g\in \St(\Delta)\}.$$

 Note that $(G(\Delta),\pi(\Delta))$ is transitive and o-primitive.  Furthermore, if $K\in \fK$ and $\Delta, \Delta'$ are both $K$-classes, then $(G(\Delta),\pi(\Delta))$ and 
$(G(\Delta'),\pi(\Delta'))$ are isomorphic, the isomorphism being induced by conjugation by any $f\in G$ with $\Delta f=\Delta'$  since $(\Gamma f)(f^{-1}gf)=(\Gamma g)f$ for all $g\in \rst(\Delta),\;\Gamma\in \pi(\Delta)$.  
It is customary to write $(G_K,\Omega_K)$ for any of these $\ell$-permutation groups; they are independent of $\Delta$ to within $\ell$-permutation isomorphism.
 
Let $\Lambda\subseteq \Omega$ and $g\in G$ with $\Lambda g=\Lambda$.
Write $\dep(g,\Lambda)$ for the element of $\Aut(\Omega,\leq)$ that agrees with
$g$ on $\Lambda$ and with the identity elsewhere; thus
$$\alpha\,\dep(g,\Lambda) =  \begin{cases} \alpha g&\hbox{if }\alpha\in \Lambda \cr \alpha & \hbox{if } \alpha\not\in \Lambda.
\end{cases} $$
Of course, in general the automorphism $\dep(g,\Lambda)$ need not belong to $G$.
We say that $(G,\Omega)$ is {\em abundant}  if for each
  $\Delta\in T$, and each $g\in \St(\Delta)$, we have $\dep(g,\Delta)\in G$. 
In particular, the kernel of the action of $G$ on the set of translates of $\Delta$ then contains the direct sum of the restricted stabilizers of these translates.
If $(G,\Omega)$ is abundant and $\Delta\in T$, then for each $g\in \St(\Delta)$, the elements $g$ and $\dep(g,\Delta)\in \rst(\Delta)$ induce the same action on $\pi(\Delta)$. If $(G,\Om)$ is abundant, then it is ample since if $\delta_1,\delta_2\in \Delta$ with $V(\delta_1,\delta_2)=\kappa(\Delta)$, let $g\in G$ with $\delta_1g=\delta_2$; then $g\in \St(\Delta)$ and $\dep(g,\Delta)\in Q_\Delta$.

We note that if $\Aut(\Omega,\leq)$ acts transitively on $\Omega$ then 
$(\Aut(\Omega,\leq),\Omega)$ is abundant.

For each automorphism $g$ of a totally ordered set $\Omega$ and each $\alpha$ in the support  $\suppo(g)$ of $g$, the
{\em supporting interval $\Lambda(\alpha,g)$ of $g\in G$ containing $\alpha$} is defined to be the smallest convex subset of $\Omega$ containing $\{ \alpha g^n\mid n\in \Z\}$. 

On any supporting interval $\Delta$ of $g$, either $g$ moves every point up, or it moves every point down. Moreover, if $f\in \Aut(\Omega)$ and $g$ moves every point of $\Delta$ up (respectively, down) then so does $f^{-1}gf$ on its supporting interval $\Delta f$. 

We will need the classification of o-primitive transitive $\ell$-permutation groups provided by McCleary's Trichotomy Theorem (S.H.\ McCleary \cite{Mc}, or  \cite[Theorem 4A]{G81} or \cite[Theorem 7E]{G99}). Let $(\bar{\Omega},\leq)$ denote the Dedekind completion of the totally ordered set $(\Omega,\leq)$ with the maximum element and minimum element removed. Assume that there is $t\in \Aut(\bar{\Omega},\leq)_+$ 
such that the supporting interval in $\bar\Omega$ of each $\alpha\in\Omega$ is $\bar\Omega$. 
If in addition $G$ is transitive on $\Omega$ and $\C_{\Aut(\bar \Omega, \leq)}(G)=\langle t\rangle$, then
$(G,\Omega)$ is called {\em periodic} with {\em period} $t$. If $(G,\Omega)$ is periodic, o-primitive and $\alpha\in \Omega$, then for all $\beta,\gamma\in \Omega$ with $\alpha<\beta<\gamma<\alpha t$ there are elements $g\in G_+$ with $\suppo(g)\cap (\alpha,\alpha t)\subseteq (\beta,\gamma)$; see \cite[II]{Mc} or \cite[Theorem 4.3.1]{G81}.  We note that if
$(G,\Omega)$ is  periodic with period $t$, and if the element $g\in G$
satisfies $\alpha g>\alpha t$ for some $\alpha\in \Omega$, then $\bar{\beta}g>\bar{\beta}$ for all $\bar{\beta}\in \bar{\Omega}$. 

If $G$ is transitive on all $n$-tuples $(\alpha_1,\dots,\alpha_n)\in \Omega^n$ with $\alpha_1< \dots < \alpha_n$, we say that $(G,\Omega)$ is o-$n$ {\em transitive}. An o-$2$ transitive $\ell$-permutation group is o-$n$ transitive for all $n\in \Z_+$; see \cite[Lemma 1.10.1]{G81}. 
      
\begin{prop}\label{trichot}
{\rm (S.H.\ McCleary)}

Let $(G,\Omega)$ be a transitive $\mro$-primitive $\ell$-permutation group. Then one of the following holds$:$
\begin{enumerate}
\item[\rm (I)] $(\Omega,\leq)$ is order-isomorphic to a subgroup of the reals and the action of $G$ on $\Om$ is the right regular representation$;$
\item[\rm  (II)] $(G,\Omega)$ is $\mro$-$2$ transitive$;$ 
\item[\rm  (III)] $(G,\Omega)$ is periodic with period $t$ and the restriction of $G$ to $\Omega \cap (\alpha,\alpha t)$ is $\mro$-$n$ transitive for all $\alpha\in \Omega$ and $n\in \Z_+$.
\end{enumerate}
\noindent Moreover, any transitive $\ell$-permutation group satisfying {\rm(i)} or {\rm(ii)} is $\mro$-primitive, but not all periodic $\ell$-permutation groups are $\mro$-primitive.
\end{prop}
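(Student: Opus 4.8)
\emph{Proof plan.} The plan is to extend the action of $G$ to the Dedekind completion $\bar\Omega$ and to read off the three cases from two invariants: the point stabiliser $G_\alpha$, which decides whether we are in case~(I), and---when $G_\alpha\neq1$---the degree of transitivity of $G$, which decides between (II) and~(III). Throughout I use that each $g\in G$ extends uniquely to an order-automorphism of $\bar\Omega$, that $\Omega$ (hence every orbit $\alpha G=\Omega$) is dense in $\bar\Omega$, and that the centraliser $C:=\C_{\Aut(\bar\Omega,\leq)}(G)$ acts freely on $\bar\Omega$: if $c\in C$ fixes a point then, commuting with the transitive $G$, it fixes a dense subset of $\bar\Omega$ and hence, being order preserving, fixes everything. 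Because $G$ is transitive, the rule $c_1\leq c_2\iff\bar\alpha c_1\leq\bar\alpha c_2$ is independent of $\bar\alpha$ and makes $C$ an o-group.

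\emph{Case $G_\alpha=1$.} Here $G$ acts regularly, so $g\mapsto\alpha g$ identifies $(\Omega,\leq)$ with $G$ as an ordered set and turns the action into right multiplication. Under this identification the convex $G$-congruences on $\Omega$ correspond exactly to the convex subgroups of $G$, so o-primitivity says $G$ has no proper non-trivial convex subgroup, i.e.\ $G$ is archimedean. By H\"older's theorem $G$ embeds as a subgroup of $\RR$, and we are in case~(I).

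\emph{Case $G_\alpha\neq1$.} If $(G,\Omega)$ is o-$2$ transitive we are in case~(II), so assume it is not; I would then build the period $t$ of case~(III) as a generator of $C$. First one shows $C\neq1$: the failure of o-$2$ transitivity produces a pair of increasing pairs that $G$ cannot match, and o-primitivity---which forbids any proper convex congruence confining the supports of elements of $G_\alpha$---lets one manufacture from this obstruction a non-identity order-automorphism commuting with all of $G$. Since $C$ is a freely acting o-group, a convex-hull argument (a proper convex subgroup of $C$ would give, via the hulls of its orbits, a proper $G$-invariant convex partition) shows $C$ is archimedean, so $C\hookrightarrow\RR$; non-regularity of $G$ rules out $C$ acting transitively (a double-centraliser argument would otherwise force $G$ itself to be regular), which together with the exclusion of the dense possibility leaves $C\cong\langle t\rangle$ infinite cyclic with $t>1$. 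Freeness and the archimedean property make the orbit $\{\bar\alpha t^{n}\}$ coinitial and cofinal, so the supporting interval of $t$ through each point is all of $\bar\Omega$; thus $(G,\Omega)$ is periodic with period $t$, and a final use of o-primitivity on a fundamental domain $\Omega\cap(\alpha,\alpha t)$ shows that restriction is o-$n$ transitive for every $n\in\Z_+$. I expect this case to be the main obstacle: extracting a nontrivial central period from the mere failure of o-$2$ transitivity, excluding the dense centraliser, and then upgrading transitivity on a single period to full o-$n$ transitivity are exactly the technical heart of McCleary's theorem, and they are where o-primitivity is used most delicately.

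\emph{The ``moreover'' clause.} Any subgroup of $\RR$ is archimedean, so its right regular representation has no proper non-trivial convex congruence and is o-primitive, giving~(I). If $(G,\Omega)$ were o-$2$ transitive but had a proper non-trivial o-block $\Delta$, pick $\alpha<\beta$ in $\Delta$ and $\gamma\notin\Delta$ lying above $\Delta$; an element carrying $(\alpha,\beta)$ to $(\alpha,\gamma)$ fixes $\Delta$ setwise yet sends $\beta\in\Delta$ to $\gamma\notin\Delta$, a contradiction, so~(II) is o-primitive. Finally, a periodic group need not be o-primitive: the group of all $g\in\Aut(\RR)$ with $g(x+1)=g(x)+1$ and $g(\Z)=\Z$ is transitive on $\RR$ and periodic with period $t:x\mapsto x+1$, yet the intervals $[n,n+1)$ $(n\in\Z)$ form a proper non-trivial convex congruence.
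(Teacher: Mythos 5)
This is one of the places where the paper offers no proof at all: Proposition \ref{trichot} is McCleary's Trichotomy Theorem, quoted verbatim with a citation to \cite{Mc} (and to \cite[Theorem 4A]{G81}, \cite[Theorem 7E]{G99}); so there is no in-paper argument to compare yours against, only the literature. Measured as a proof, your proposal establishes the easy parts correctly (the regular case via H\"older, and the o-primitivity of types (I) and (II) in the ``moreover'' clause), but it has a genuine gap exactly where you say you expect one: the passage from ``$G_\alpha\neq 1$ and $(G,\Omega)$ not o-$2$ transitive'' to ``$\C_{\Aut(\bar\Omega,\leq)}(G)\neq 1$''. The sentence ``the failure of o-$2$ transitivity produces a pair of increasing pairs that $G$ cannot match, and o-primitivity \dots lets one manufacture from this obstruction a non-identity order-automorphism commuting with all of $G$'' is a description of the theorem, not an argument. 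McCleary's actual construction proceeds by analysing the orbits of the extended stabiliser $\bar G_{\bar\alpha}$ on $\bar\Omega$ (paired orbits, the first positive non-trivial orbit, and the map sending $\bar\alpha$ to the supremum of that orbit, which one then proves is an order-automorphism centralising $G$); none of that machinery appears in your sketch, and the subsequent steps (excluding a dense centraliser, and upgrading transitivity on $\Omega\cap(\alpha,\alpha t)$ to o-$n$ transitivity for all $n$) are likewise asserted rather than derived. Since the entire content of case (III) rests on producing $t$, the proposal does not prove the trichotomy.

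There is also a concrete error in your counterexample for the final clause. The group $\{g\in\Aut(\RR,\leq)\mid (x+1)g=xg+1 \hbox{ and } \Z g=\Z\}$ is \emph{not} transitive on $\RR$: the condition $\Z g=\Z$ forces the orbit of $0$ to lie inside $\Z$, so no element carries $0$ to $\tfrac12$. The example can be repaired by taking $\Omega$ to be the orbit $\RR\setminus\Z$ of a non-integer point (so that $\bar\Omega=\RR$, the group is transitive on $\Omega$, is periodic with period $x\mapsto x+1$, and the traces of the intervals $(n,n+1)$ give a proper non-trivial convex congruence), but as written the witness fails the transitivity hypothesis it is supposed to illustrate.
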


We refer to the o-primitive $\ell$-permutation groups $(G,\Omega)$ in (I), (II), (III) above as being of  types (I), (II) and (III) respectively.  Those of type (I) are abelian o-groups, whereas those of type (II) have trivial centre. Those of type (III) have centre $G \cap \langle t\rangle$. 

\begin{cor} \label{opa}
If $(\Aut(\Om,\leq),\Om)$ is transitive and non-abelian, then it is o-$2$ transitive.
\end{cor}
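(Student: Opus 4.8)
The plan is to deduce everything from McCleary's Trichotomy Theorem (Proposition \ref{trichot}), the only genuine work being to rule out the periodic alternative for a full automorphism group. Writing $G=\Aut(\Om,\leq)$ and applying Proposition \ref{trichot} to the transitive $\mro$-primitive group $(G,\Om)$, there are three cases. In type (I) the action is the right regular representation of a subgroup of $\RR$, so $G$ is abelian, contrary to hypothesis. In type (II) the group is $\mro$-$2$ transitive, which is exactly what we want. So the whole statement comes down to showing that $G$ cannot be of type (III); that is, a \emph{full} automorphism group is never periodic.

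To exclude type (III) I would argue by contradiction. Suppose $(G,\Om)$ is periodic with period $t$, so that $t\in\Aut(\bar{\Om},\leq)_+$ and $\C_{\Aut(\bar{\Om},\leq)}(G)=\langle t\rangle$. Fix $\alpha\in\Om$. Because the restriction of $G$ to the period $\Om\cap(\alpha,\alpha t)$ is $\mro$-$2$ transitive, it contains a non-identity element supported in a proper subinterval $(\beta,\gamma)$ with $\alpha<\beta<\gamma<\alpha t$ (a bump); this is the content of the property recorded just before Proposition \ref{trichot}. Extending this bump by the identity outside $(\beta,\gamma)$ gives an order-automorphism $\hat f=\dep(g,(\beta,\gamma))$ of $(\Om,\leq)$ with $\suppo(\hat f)\subseteq(\beta,\gamma)\subseteq(\alpha,\alpha t)$. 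Here is the single place where fullness is used: since $G$ is the whole automorphism group of $(\Om,\leq)$, this $\hat f$ automatically lies in $G$. Thus $G$ contains a non-trivial element whose support is confined to one period.

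The contradiction then comes from the centrality of $t$. As $t$ centralizes $G$ we have $t^{-1}\hat f t=\hat f$, whereas conjugation by $t$ shifts supports, $\suppo(t^{-1}\hat f t)=\suppo(\hat f)\,t\subseteq(\alpha t,\alpha t^2)$; since $\alpha<\alpha t<\alpha t^2$, this is disjoint from $(\alpha,\alpha t)\supseteq\suppo(\hat f)$. Equating the two expressions for $\suppo(\hat f)$ forces $\suppo(\hat f)=\emptyset$, i.e.\ $\hat f=1$, a contradiction. Hence type (III) is impossible and $(G,\Om)$ is $\mro$-$2$ transitive.

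The hard part is precisely this exclusion of the periodic case, and it is instructive to see why fullness cannot be dropped: a general transitive $\mro$-primitive $\ell$-permutation group really can be periodic, because there an element supported in one period is obliged to repeat in every period, so conjugation by $t$ preserves its support. What saves us for $G=\Aut(\Om,\leq)$ is the freedom to truncate such an element to a single period by extending its restriction by the identity and still land inside $G$; the resulting bump has support strictly inside one period and therefore cannot commute with a period-shift.
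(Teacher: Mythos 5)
Your exclusion of type (III) is correct and is surely the argument the authors have in mind: the paper states the corollary without proof, immediately after Proposition \ref{trichot}, and the only non-trivial point is exactly the one you address. A full automorphism group is closed under depressing an element to a single period, and a non-trivial element supported inside $(\alpha,\alpha t)$ cannot be centralized by $t$, since conjugation by $t$ translates its support into the disjoint interval $(\alpha t,\alpha t^{2})$. Your closing remark explaining why this does not contradict the existence of periodic o-primitive groups in general (the depressed element need not lie in a proper subgroup) is also exactly right.

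The gap is at the very first step: Proposition \ref{trichot} applies only to o-\emph{primitive} transitive groups, and you simply assert that $(G,\Om)$ is o-primitive. This does not follow from ``transitive and non-abelian''. For instance, take $\Om=\Z\times\RR$ with the lexicographic order used in the Introduction (compare the $\RR$-coordinate first). Then $\Aut(\Om,\leq)$ is the unrestricted Wreath product $\Z^{\RR}\rtimes\Aut(\RR,\leq)$: it is transitive and non-abelian, but the convex congruence whose classes are the sets $\Z\times\{r\}$ (the classes of the relation ``only finitely many points in between'', which every automorphism preserves) is proper and non-trivial, so the group is not o-primitive and in particular not o-$2$ transitive --- indeed it preserves the immediate-successor relation. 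So the statement as printed is missing the hypothesis ``o-primitive''; note that in the paper's own applications (for example the derivation of Corollary \ref{reals} sketched in the Introduction) o-primitivity is always established separately, via Theorem \ref{MTd}, before Corollary \ref{opa} is invoked. With that hypothesis added your proof is complete; without it, the step where you pass to the trichotomy fails and no proof is possible, because the statement itself is false.
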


Our notation for conjugates and commutators is in accordance with our use of right actions: we write $g^f$ for $f^{-1}gf$ and $[f,g]$ for $f^{-1}g^{-1}fg$. 

We write $X\subset Y$ for $X\subseteq Y$ and $X\neq Y$.

  For further background and notation, see \cite{HM}, \cite[Part II]{G81} or \cite[Chapter 7]{G99}.

\section{A technical lemma}

\begin{lemma} \label{primd}
Let $(G,\Omega)$ be o-$2$ transitive and $g,h\in G$ with $\suppo(h)\cap \suppo(h^g)=\emptyset$.
Then there are elements $f,k\in G$ such that $$[h^{-1},h^f][h^{-g},h^{gk}]\neq [h^{-g},h^{gk}][h^{-1},h^f].$$
\end{lemma}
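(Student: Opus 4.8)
Throughout write $A:=[h^{-1},h^f]$ and $B:=[h^{-g},h^{gk}]$; note that the asserted inequality forces $h\neq 1$, which I assume. Since the support of a commutator is contained in the union of the supports of its two entries, and $\suppo(h^{-1})=\suppo(h)$, $\suppo(h^f)=\suppo(h)f$, $\suppo(h^{-g})=\suppo(h)g$, $\suppo(h^{gk})=\suppo(h)gk$, we have $\suppo(A)\subseteq\suppo(h)\cup\suppo(h)f$ and $\suppo(B)\subseteq\suppo(h)g\cup\suppo(h)gk$. To prove $AB\neq BA$ it suffices to exhibit a single $\gamma\in\Om$ with $\gamma AB\neq\gamma BA$, so the whole task is to choose $f$ and $k$ forcing $\suppo(A)$ and $\suppo(B)$ to overlap over a common interval in staggered (non-nested) position. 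The hypothesis $\suppo(h)\cap\suppo(h)g=\emptyset$ says exactly that the natural base regions of $A$ and $B$ are disjoint, so the work is to make their supports reach across towards each other.

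The freedom for this comes from o-$n$ transitivity: $(G,\Om)$ is o-$2$ transitive, hence o-$n$ transitive for every $n$, as recalled just before Proposition \ref{trichot}. Fix a supporting interval $\Lambda$ of $h$; by the symmetry of the statement in $h$ and $h^{-1}$ I may assume $h$ moves every point of $\Lambda$ upward. The plan is to choose $f$ so that the conjugate $h^f$ has a supporting interval $\Lambda f$ staggered with $\Lambda$ and stretched so far to the right that $A$ is nontrivial with support reaching from $\Lambda$ into the region $\suppo(h)g$.

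Concretely, localizing near $\Lambda\cup\Lambda f$, one computes $A=p\,(p^{q})^{-1}$, where $p$ is the $\Lambda$-bump of $h$ and $q$ is the $\Lambda f$-bump of $h^f$. For staggered positive bumps on $(a,b)$ and $(c,d)$ with $a<c<b<d$ this is nontrivial and, on $(b,bq)$, acts as a genuine bump with upper fixed point $bq$, the image of the top of $\Lambda$. Using o-$n$ transitivity I place $bq$ inside $\suppo(h)g$, so that $A$ acts there as a bump on a subinterval $T\subseteq\suppo(h)g$. I then choose $k$ in the same way, so that $B$ is a nontrivial bump based on $\suppo(h)g$ whose support is positioned to stagger with $\suppo(A)$ across $T$, with its lower foot inside $T$ and its upper foot above $T$. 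For two staggered bumps a short case-check on their directions then furnishes a witness: taking $\gamma$ just beyond the upper fixed point of $A$, with $\gamma B$ dragged back inside $\suppo(A)$, gives $\gamma AB=\gamma B$ but $\gamma BA=(\gamma B)A\neq\gamma B$, whence $AB\neq BA$.

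The main obstacle is the support bookkeeping concealed in the phrase ``localizing near $\Lambda\cup\Lambda f$''. Since $h$ is given and need not be a single bump, $h^f$ and $h^{gk}$ carry all the translated supporting intervals of $h$, and one must ensure that over $T$ only the two intended bumps interact, and that they do so in staggered rather than nested position, since nested bumps can commute. Here the disjointness hypothesis is decisive: $h$ is inert on $\suppo(h)g\supseteq T$, so the action on $T$ comes only from the conjugating elements, and by taking $f$ and $k$ with suitably localized support (available in an o-$2$ transitive group) one can arrange that no unintended supporting interval of $h^f$ or $h^{gk}$ meets $T$. The delicate point is the stretch of $\Lambda f$ across the gap between $\Lambda$ and $\suppo(h)g$, where stray supporting intervals of $h$ could be dragged into $T$; controlling these, together with the routine direction analysis for the witness $\gamma$, is where the real effort lies.
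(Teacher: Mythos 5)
Your overall strategy --- use o-$n$ transitivity to position the conjugates $h^f$ and $h^{gk}$ so that $A=[h^{-1},h^f]$ and $B=[h^{-g},h^{gk}]$ acquire overlapping, staggered supports inside $\suppo(h)g$, then exhibit a single witness point --- is the same as the paper's. But your execution has a genuine gap, and it sits exactly where you yourself say ``the real effort lies''. Your witness computation $\gamma AB=\gamma B\neq\gamma BA$ needs $\gamma A=\gamma$, i.e.\ $\gamma\notin\suppo(A)$, and this is a \emph{global} statement about $\suppo(A)\subseteq\suppo(h)\cup\suppo(h)f$: since $h$ need not be a single bump, $h^f$ carries a translate $\Lambda'f$ of every supporting interval $\Lambda'$ of $h$, and any of these could meet $T$ or contain $\gamma$. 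The only mechanism you offer to rule this out is to take $f$ and $k$ ``with suitably localized support (available in an o-$2$ transitive group)''. That is not available in general: there are pathologically o-$2$ transitive $\ell$-permutation groups in which no non-trivial element has bounded support (the hypothesis $(\dagger\dagger)$ and the reference \cite{GMacP} exist precisely because o-$2$ transitive groups can lack elements of small support). Moreover, even if $\suppo(f)$ were confined to a bounded interval $I$, every supporting interval of $h$ meeting $I$ would still contribute a displaced interval of $h^f$, so localizing $f$ does not by itself keep unintended intervals of $h^f$ away from $T$. The step is therefore both unproved and resting on a false general principle, and the same objection applies to your control of $\suppo(B)$ via $k$.

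The paper's proof shows how to avoid the issue entirely: it never controls $\suppo(A)$ or $\suppo(B)$. It uses o-$6$ and o-$8$ transitivity to prescribe $k$ and $f$ on explicit finite increasing sequences, and then evaluates the two commutators at one point $\lambda$ by unwinding the words letter by letter; every evaluation required is either one of the prescribed values of $f^{\pm1}$ or $k^{\pm1}$, a known value of $h^{\pm1}$ or $h^{\pm g}$ at a chosen point of the chosen supporting intervals, or an application of $\suppo(h)\cap\suppo(h^g)=\emptyset$ to see that $h$ or $h^g$ fixes the point in question. To salvage your support-based picture you would need to replace the appeal to localized supports by this kind of pointwise bookkeeping, working only with the finitely many point images you have actually pinned down.
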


\begin{proof} 
Since $\suppo(h)\cap \suppo(h^g)=\emptyset$, 
we may assume without loss of generality that there are supporting intervals $\Delta_1, \Delta_2:=\Delta_1g$ of $h$ and $h^g$, respectively, such that
 $\delta_1<\delta_2$ for all $\delta_i\in \Delta_i$ ($i=1,2$), the proof for $\Delta_2:=\Delta_1g<\Delta_1$ being similar. 
 Without loss of generality, $\delta_1 h>\delta_1$ for all $\delta_1\in \Delta_1$ (and so $\delta_2 h^g>\delta_2$ for all $\delta_2\in \Delta_2$).
Let $\gamma,\delta, \lambda, \mu\in \Delta_2$ with
$$\gamma<\gamma h^g<\mu h^{-g}<\delta<\lambda<\mu<\delta h^g<\lambda h^g.$$
The six elements 
$$\gamma,\;\gamma h^g,\; \mu h^{-g},\;\delta,\;\mu,\;\lambda h^g \eqno(1) $$
constitute a strictly increasing sequence in $\Delta_1$.  Choose $\xi_{-1},\xi_0\in \Delta_1$ with $\xi_{-1}<\xi_0$, and elements $\xi_1,\xi_2\in \Delta_2$ with  
$$\xi_0<\xi_1<\xi_1 h^g<\xi_2<\xi_2 h^g.$$   Then the six elements
$$\xi_{-1},\;\xi_0,\;\xi_1,\;\xi_1 h^g,\;\xi_2,\;\xi_2 h^g \eqno(2)$$ 
constitute a strictly increasing sequence in $\Omega$.  Using o-$6$-transitivity we can find an element $k$ of $G$ that maps the $n$th element of sequence ($2$) to the $n$th element of sequence ($1$) for each $n$.  
Since $\suppo(h)\cap \suppo(h^g)=\emptyset$ and $\xi_{-1}\in \Delta_1\subseteq \suppo(h)$ we have $\gamma h^{gk}=\gamma k^{-1}h^gk=\xi_{-1}h^gk=\xi_{-1}k=\gamma$.  This and other similar easy calculations shows that
$$
\gamma h^{gk}=\gamma,\quad (\gamma h^g)h^{gk}=\gamma h^g,\quad (\mu h^{-g})h^{gk}=\delta,\quad  \mu h^{gk}=\lambda h^g.$$

Now choose $\alpha\in \Delta_1\subseteq \suppo(h)$ and $\beta\in (\alpha h^{-1},\alpha),$ and choose 
$\zeta_1,\dots,\zeta_4\in \suppo(h)$ such that the eight elements 
$$\zeta_4,\;\zeta_3,\;\zeta_2,\;\zeta_1,\;\zeta_4h,\;\zeta_3h,\;\zeta_2h,\;\;\zeta_1h$$
form a strictly increasing sequence. 
Since $\suppo(h)<\suppo(h^g)$, the eight elements 
$$\beta h^{-3},\,\;\alpha  h^{-3},\,\;\beta h^{-2},\,\;\alpha  h^{-2},\,\;\gamma h^{-g},\,\;\gamma ,\,\;\delta,\,\;\lambda$$
also form a strictly increasing sequence, and we can find an element $f\in G$ that maps the $n$th term of the former of these two sequences to the $n$th term of the latter for each $n$.
A routine calculation now shows that  
$$\alpha h^{-2}h^{f}=\lambda, \quad \beta h^{-2}h^{f}=\delta,\quad \alpha h^{-3}h^{f}=\gamma, \;\;\;\hbox{and}\;\;\; \beta h^{-3}h^{f}=\gamma h^{-g}.$$

Let
$$ w_1:=[h^{-1},h^f],\quad \hbox{and} \quad
w_2:=[h^{-g},h^{gk}].$$ 
Further simple calculations show that $$\lambda w_1=\gamma\quad \hbox{and}\quad \lambda w_2=\delta.
$$
Moreover, $$\gamma w_2= \gamma \quad \hbox{and}\quad  \delta w_1=\beta h^{-3}h^f=
\gamma h^{-g} .$$  Hence
$$\lambda w_1w_2=\gamma\neq \gamma h^{-g}=\delta w_1 = \lambda w_2 w_1.$$ \end{proof}

The conclusion of Lemma \ref{primd} also holds for periodic o-primitive transitive $\ell$-permutation groups.

\begin{lemma} \label{primper}
Let $(G,\Omega)$ be periodic and o-primitive with period $t$.
Suppose that $g,h\in G$ with $\suppo(h)\cap \suppo(h^g)=\emptyset$.
Then there are $f,k\in G$ such that $$[h^{-1},h^f][h^{-g},h^{gk}]\neq [h^{-g},h^{gk}][h^{-1},h^f].$$   
\end{lemma}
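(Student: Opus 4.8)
The plan is to reduce to Lemma~\ref{primd} by carrying out the entire construction inside a single period; the one new ingredient is that, in the periodic case, the supports of $h$ and $h^g$ are permuted by the period $t$. First I would note that we may assume $h\neq1$ (otherwise there is nothing to prove, as already for Lemma~\ref{primd}), so that both supports are nonempty. Since $(G,\Omega)$ is periodic with period $t$ we have $t\in\C_{\Aut(\bar \Omega, \leq)}(G)$; as $t$ commutes with $h$, the sets $\suppo(h)$ and $\suppo(h^g)=\suppo(h)g$ are invariant under $t$. Because they are disjoint and nonempty, $\suppo(h)\neq\Omega$, so $h$ has fixed points; the fixed-point set is $t$-invariant, and each supporting interval of $h$, being bounded by two consecutive fixed points, lies in a single period $(\gamma,\gamma t)$, and likewise for $h^g$.

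Next I would produce a supporting interval $\Delta_1$ of $h$ and a supporting interval $\Delta_2$ of $h^g$ inside a common period. Fixing $\Delta_1$, the $t$-translates of a supporting interval of $h^g$ meet each period exactly once, and I take $\Delta_2$ to be the translate nearest $\Delta_1$. This $\Delta_2$ shares a period with $\Delta_1$: since $\Delta_1\subseteq\suppo(h)$ is disjoint from $\suppo(h^g)$, it contains no endpoint of a supporting interval of $h^g$, hence cannot straddle the period boundary separating consecutive translates. A density argument then yields $\alpha\in\Omega$ with $\Delta_1\cup\Delta_2\subseteq(\alpha,\alpha t)$. As in Lemma~\ref{primd} we may assume $\Delta_1<\Delta_2$ with $h$ moving points up on $\Delta_1$, the remaining cases being similar.

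Finally I would replay the proof of Lemma~\ref{primd} within $(\alpha,\alpha t)$. By Proposition~\ref{trichot}(III) the action induced by $G$ on $(\alpha,\alpha t)$ is o-$n$ transitive for every $n$, so each increasing tuple occurring in that proof---all of whose entries lie in $\Delta_1\cup\Delta_2\subseteq(\alpha,\alpha t)$---is matched by an element of $G$ stabilizing $(\alpha,\alpha t)$; this supplies the required $k$ and $f$. As $h$ preserves $\Delta_1$ and fixes $\Delta_2$, while $h^g$ does the reverse, and $f,k$ stabilize $(\alpha,\alpha t)$, every point arising in the commutator computations stays in $(\alpha,\alpha t)$, and those computations are word-for-word those of Lemma~\ref{primd}. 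They give $\lambda w_1w_2=\gamma\neq\gamma h^{-g}=\lambda w_2w_1$ with $w_1=[h^{-1},h^f]$, $w_2=[h^{-g},h^{gk}]$ and $\lambda,\gamma\in\Delta_2$, whence $w_1w_2\neq w_2w_1$.

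The step I expect to be the main obstacle is the reduction to a single period, that is, the placement of a supporting interval of $h$ together with one of $h^g$ inside one interval $(\alpha,\alpha t)$; this is precisely where periodicity enters, through the $t$-invariance of the supports and the impossibility of a supporting interval straddling the others' period boundaries. Once this is achieved the combinatorial core of the argument is inherited unchanged from Lemma~\ref{primd}.
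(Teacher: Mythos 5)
Your proposal is correct and follows essentially the same route as the paper: place a supporting interval of $h$ and one of $h^g$ inside a single period $(\alpha,\alpha t)$, then rerun the construction of Lemma \ref{primd} verbatim using the o-$n$ transitivity of the action on $\Omega\cap(\alpha,\alpha t)$ from Proposition \ref{trichot}(III). The only (inessential) difference is the micro-justification for the single-period reduction — you use $t$-invariance of $\suppo(h)$ and its fixed-point set, while the paper observes that $\suppo(h)\cap(\eta,\eta t)$ must be a proper subset of $(\eta,\eta t)$ since otherwise some power of $h$ would have support all of $\Omega$.
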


\begin{proof} 
Since $\suppo(h)\cap \suppo(h^g)=\emptyset$, for any $\eta\in \Omega$ the set $\suppo(h) \cap (\alpha, \alpha t)$ is a proper subset of $(\eta,\eta t)$, since
otherwise there is $n\in \Z_+$ with $\eta h^n>\eta t$, whence $\suppo(h)=\suppo(h^n)=\Omega$. Similarly, $\suppo(h^g)\cap (\eta,\eta t)$ is a proper subset of $(\eta,\eta t)$. Fix $\eta\in \Omega$.
We may assume that there are supporting intervals $\Delta_1, \Delta_2:=\Delta_1g$ of $h$ and $h^g$, respectively, such that
 $\Delta_1\cup \Delta_2\subseteq (\eta,\eta t)$ and, as before, $\delta_1<\delta_2$ for all $\delta_i\in \Delta_i$ ($i=1,2$). Without loss of generality, $\delta_1 h>\delta_1$ for all $\delta_1\in \Delta_1$, and  $\delta_2 h^g>\delta_2$ for all $\delta_2\in \Delta_2$.
We now argue exactly as before to find elements $f,k$ of $G$ whose restrictions to $(\eta,\eta t)$ satisfy the required non-equality condition. 
\end{proof}

\section{Centralizers}
Throughout this section and the next we assume that $(G,\Omega)$ is an ample transitive $\ell$-permutation group, and we write $T$, $\fK$
for its root system and spine. 
We assume in addition in this section that $\fK$ has no minimal element.

For each $\Delta\in T$ and $h\in Q_\Delta$, let  $$X_{h}:=\{ [h^{-1},h^g] \mid g\in G\} \quad \hbox{and}\quad
W_{h}=\bigcup\{X_{h^g}\mid g\in G,\; [X_{h},X_{h^g}]\neq 1\}.$$
Since $(\rst(\Delta))^g=\rst(\Delta g)$ commutes with $\rst(\Delta)$ for $g\notin\St(\Delta)$ we have
$$X_h\subseteq \rst(\Delta)\quad\hbox{and} \quad W_h\subseteq \rst(\Delta).$$

In the proof of the main result of this section, Corollary \ref{2centd}, we will use the following observation.\medskip 

\begin{remark}\label{trivial}  {\rm Let $(\Lambda,\leq)$ be a totally ordered set and $\frak S$ be a {\em finite} set of pairwise disjoint convex subsets of $\Lambda$. If $f\in \Aut(\Lambda,\leq)$ and $\frak S f=\frak S$, then $Sf=S$ for all $S\in \frak S$.}  \end{remark}  

\begin{lemma} \label{40}
Let $\Delta\in T$ and $h\in Q_\Delta$.  
\begin{enumerate}
\item[\rm(a)]
Let  $\Delta'\in T$ with $\Delta'\subset \Delta$ and $\Delta' h\neq \Delta'$, and let $g\in \rst(\Delta')$ with $g\neq 1$. 
\begin{enumerate} 
\item[\rm (i)] 
Then $[h^{-1},h^g]\neq 1$. In particular, $X_h\neq 1$.
\item[\rm (ii)] 
If $f\in G$ and $[[h^{-1},h^g],f]=1$, then $\Delta' f = \Delta'$. In particular,
if $f\in\C_G(X_h)$ then $\Delta'f=\Delta'$.\end{enumerate}
\item[\rm (b)] If $\beta\in \suppo(h)$ and $f\in G$, then either $\beta f= \beta$ or 
$[[h^{-1},h^g],f]\neq1$ for some $g\in\rst(\Delta)$.
\end{enumerate}
\end{lemma}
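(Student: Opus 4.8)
The plan is to analyze the commutator $[h^{-1},h^g]$ concretely via its action on points, exploiting the fact that $h\in Q_\Delta$ moves points within $\Delta$ while $g$ has support in the smaller block $\Delta'$ (for part (a)) or in $\Delta$ (for part (b)). The key structural fact I would lean on throughout is the root-system property (Remark~\ref{ccc}): since convex congruences are linearly ordered, distinct translates of an o-block are either disjoint or nested, and this controls how $h$ permutes the translates of $\Delta'$.

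\medskip

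For part (a)(i), I would argue by a direct point-chase. Since $\Delta'\subset\Delta$ and $h\in Q_\Delta$ moves $\Delta'$ to a \emph{disjoint} block $\Delta'h\neq\Delta'$ (disjoint by the root-system property, as $\Delta'$ and $\Delta'h$ are translates of the same covering congruence), the conjugate $h^g=g^{-1}hg$ has support concentrated where $g$ can carry the support of $h$. The cleanest route: pick $\alpha\in\suppo(g)\subseteq\Delta'$ and compute that $[h^{-1},h^g]=h\cdot (h^g)^{-1}\cdot h^{-1}\cdot h^g$ moves some explicit point, using that $h$ and $h^g$ do not commute because their supports interact only through the single block $\Delta'$ and its $h$-image. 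Concretely, I expect $\suppo(h^g)$ and $\suppo(h)$ to overlap precisely on $\Delta'\cup\Delta'h$-type regions, forcing $[h^{-1},h^g]\neq 1$; specializing $g$ to any nontrivial element of $\rst(\Delta')$ gives $X_h\neq 1$.

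\medskip

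For part (a)(ii) I would use the support-tracking idea: the support of $w:=[h^{-1},h^g]$ is contained in $\Delta'\cup\Delta'h$ (a bounded, explicitly describable convex region inside $\Delta$), and more importantly $w$ acts nontrivially in a way that \emph{detects} the block $\Delta'$. If $[w,f]=1$, then $f$ normalizes $\suppo(w)$ and hence must preserve the relevant convex pieces. The mechanism is Remark~\ref{trivial}: $\suppo(w)$ meets only finitely many translates of $\Delta'$ (namely $\Delta'$ and $\Delta'h$, or a comparably small finite set), so $f$ commuting with $w$ forces $f$ to fix each such block setwise, giving $\Delta'f=\Delta'$. The statement for $f\in\C_G(X_h)$ is then immediate, since such $f$ centralizes every $[h^{-1},h^g]$ in particular this one. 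Part (b) is the contrapositive companion: if $f$ moves some $\beta\in\suppo(h)$, then $f$ fails to preserve the block $V(\beta,\beta f)$-structure near $\beta$, and by choosing $g\in\rst(\Delta)$ whose commutator $[h^{-1},h^g]$ has support straddling $\beta$, one gets $[[h^{-1},h^g],f]\neq 1$.

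\medskip

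\textbf{The main obstacle} I anticipate is part (a)(ii) and (b): converting ``$f$ commutes with a specific commutator'' into ``$f$ fixes the block $\Delta'$'' requires pinning down $\suppo([h^{-1},h^g])$ precisely enough that Remark~\ref{trivial} applies, i.e.\ showing the support meets only \emph{finitely} many $\Delta'$-translates. This hinges on $h\in Q_\Delta$ having bounded displacement relative to the $\kappa(\Delta')$-congruence, which is exactly the kind of finiteness the root-system structure and the definition of $Q_\Delta$ should supply, but the bookkeeping on which translates the support hits is where the real care is needed. Everything else is routine point-chasing in a totally ordered set.
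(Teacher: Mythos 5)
Your plan follows the paper's proof in outline: decompose the commutator so that its support lies in finitely many pairwise disjoint translates of $\Delta'$, apply Remark \ref{trivial} for (a)(ii), and obtain (b) as the contrapositive of (a)(ii) after locating a suitable $\Delta'$. But the single computation that makes all of this work is missing from your sketch, and without it your support claims are wrong. With the paper's conventions ($g^f=f^{-1}gf$, $[f,g]=f^{-1}g^{-1}fg$) one has
$$[h^{-1},h^g]=g^{-h^{-1}}\,g\,g^{-h}\,g,$$
whose factors are supported in $\Delta'h^{-1}$, $\Delta'$, $\Delta'h$, $\Delta'$ respectively. Since $\Delta'h\neq\Delta'$ forces $\Delta'h^{-1},\Delta',\Delta'h$ to be three pairwise disjoint o-blocks, the restrictions of the commutator to them are conjugates of $g^{-1}$, $g^{2}$, $g^{-1}$ and are all nontrivial. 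This gives (a)(i) immediately, shows that $\suppo([h^{-1},h^g])$ meets exactly these \emph{three} translates (not the two, $\Delta'\cup\Delta'h$, that you name), and dissolves the ``main obstacle'' you flag: there is no bookkeeping about bounded displacement, because the commutator is manifestly localized. Your expectation that $\suppo(h)$ and $\suppo(h^g)$ ``overlap precisely on $\Delta'\cup\Delta'h$-type regions'' is also false (both supports fill out most of $\Delta$) and is not the mechanism; what matters is that $h^{-1}h^g=g^{-h}g$ is already supported in two translates of $\Delta'$.

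For (b) your sketch is thinnest exactly where the work lies. One must produce a non-singleton $\Delta'\in T$ with $\Delta'\subset\Delta$, $\Delta'h\neq\Delta'$ \emph{and} $\Delta'f\neq\Delta'$, together with a nontrivial $g\in\rst(\Delta')$, and then quote (a)(ii). The paper does this by comparing $V(\beta,\beta h)$ and $V(\beta,\beta f)$ --- one contains the other by Remark \ref{ccc} --- and choosing $\Delta'$ strictly inside the $\beta$-block of both, which uses the standing assumption of this section that $\fK$ has no minimal element, and then taking $g\in Q_{\Delta'}$, which is nonempty by ampleness. ``Choosing $g$ whose commutator has support straddling $\beta$'' does not by itself deliver $\Delta'f\neq\Delta'$, which is the property actually needed to apply (a)(ii).
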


\begin{proof}  (a) The elements $g^{h^{-1}},g, g^{h}$ have disjoint supports contained in $ \Delta'h^{-1},\Delta'$ and $\Delta'h$ respectively, and so the restrictions of $[h^{-1},h^g]=g^{-h^{-1}}gg^{-h}g$ to these three sets are
conjugates of $g^{-1}$ and $g^2$ and are non-trivial.  Assertion (i) follows. An arbitrary conjugate
$[h^{-1},h^g]^f$ has non-trivial restrictions to the distinct o-blocks $ \Delta'h^{-1}f,\Delta'f$ and $\Delta'hf$, and so if the hypothesis of (ii) holds then
$\{ \Delta'h^{-1},\Delta',\Delta'h\}f=\{\Delta'h^{-1},\Delta',\Delta'h\}$. Thus $f$ must map each of $\Delta'h^{-1},\Delta',\Delta'h$ to itself, by Remark \ref{trivial}.
 
(b) Suppose that $\beta f\neq \beta$. By Remark \ref{ccc}, one of the convex congruences $V(\beta, \beta h), V(\beta,\beta f)$ contains the other. Let $\Delta'\in T$ be a non-singleton o-block that is strictly contained in the o-block containing $\beta$ for each of these congruences.  Then $\Delta'\subset \Delta$ and $\Delta' f\neq \Delta'$.
Choose $g\in Q_{\Delta'}$; then $g\in\rst(\Delta')$ and from (a)(ii) we have $[[h^{-1},h^g],f]\neq1$.
\end{proof}  

\begin{lemma} \label{centd}
Let $\Delta\in T$ and $h\in Q_{\Delta}$.  
\begin{enumerate}
\item[\rm(a)]  $\C_G(X_h)$ contains the pointwise stabilizer of $\Delta$ and is contained in the pointwise stabilizer of $\suppo(h)$.
\item[\rm(b)] $$ W_{h} = \bigcup\{ X_{h^g}\mid g\in \St(\Delta)\}.$$ 
\item[\rm(c)]
$\C_G(W_h)$ is the pointwise stabilizer of $\Delta$.
\end{enumerate}\end{lemma}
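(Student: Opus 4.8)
The plan is to dispatch (a) and the inclusion $W_h\subseteq\bigcup\{X_{h^g}\mid g\in\St(\Delta)\}$ quickly, to reduce (c) to (a) together with the reverse inclusion in (b), and to concentrate the real effort on that reverse inclusion, which I expect to be the crux. For (a), note first that if $f$ fixes $\Delta$ pointwise then $\suppo(f)\cap\Delta=\emptyset$, while every element of $X_h\subseteq\rst(\Delta)$ is supported inside $\Delta$; since automorphisms with disjoint supports commute, the pointwise stabilizer of $\Delta$ centralizes $X_h$. For the second containment, take $f\in\C_G(X_h)$ and $\beta\in\suppo(h)$: if $\beta f\neq\beta$, then Lemma \ref{40}(b) produces $g\in\rst(\Delta)$ with $[[h^{-1},h^g],f]\neq 1$, contradicting $f\in\C_G(X_h)$, so $f$ fixes $\suppo(h)$ pointwise.

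For (b) I first record the identity $X_{h^g}=(X_h)^g$ and the fact that $h^g\in Q_\Delta$ whenever $g\in\St(\Delta)$. If $[X_h,X_{h^g}]\neq 1$, then $\Delta g=\Delta$, since otherwise $X_h\subseteq\rst(\Delta)$ and $X_{h^g}\subseteq\rst(\Delta g)$ have disjoint supports and commute; hence every $g$ occurring in the union defining $W_h$ lies in $\St(\Delta)$, which gives the inclusion $\subseteq$. The substantive claim is the reverse inclusion, which reduces to showing $[X_h,X_{h^g}]\neq 1$ for every $g\in\St(\Delta)$ (in particular, with $g=1$, that $X_h$ is non-abelian). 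To prove this I localize: since $\fK$ has no least element I may pick $\Delta'\in T$ with $\Delta'\subsetneq\Delta$, and when $\suppo(h)\cap\suppo(h^g)\neq\emptyset$ I may choose $\Delta'$ small and positioned about a common moved point so that $\Delta' h\neq\Delta'$ and $\Delta' h^g\neq\Delta'$. As in the computation in Lemma \ref{40}(a), for $g_1\in\rst(\Delta')$ the element $[h^{-1},h^{g_1}]\in X_h$ restricts on $\Delta'$ to $g_1^{2}$, and $[h^{-g},h^{gk}]\in X_{h^g}$ restricts on $\Delta'$ to $k^{2}$ for $k\in\rst(\Delta')$, so their commutator restricts on $\Delta'$ to $[g_1^{2},k^{2}]$. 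It thus suffices to exhibit non-commuting squares in $\rst(\Delta')$: choosing $u\in\rst(\Gamma)\setminus\{1\}$ for a $\pi(\Delta')$-class $\Gamma$ moved by some $w\in Q_{\Delta'}$ (so $\rst(\Gamma)\neq 1$ by Lemma \ref{neweltsd}, as $\Delta'$ is not minimal) with $\Gamma w^{2}\neq\Gamma$, the supports of $u^{2}$ and $(u^{2})^{w^{2}}$ are disjoint, whence $[u^{2},w^{2}]\neq 1$. This settles all $g$ with $\suppo(h)\cap\suppo(h^g)\neq\emptyset$, in particular $g=1$.

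The remaining, and I expect hardest, case is $\suppo(h)\cap\suppo(h^g)=\emptyset$, where no sub-o-block is moved by both $h$ and $h^g$ and the square computation on a common $\Delta'$ is unavailable. Here I pass to the o-primitive component at the lowest level at which $h$ and $h^g$ interact; that component is of type (II) or (III), and Lemma \ref{primd} or Lemma \ref{primper}, respectively, furnishes $f,k$ with $[h^{-1},h^f][h^{-g},h^{gk}]\neq[h^{-g},h^{gk}][h^{-1},h^f]$, which I lift back to $G$ using abundance and the operator $\dep$. The main obstacle is organizing these two regimes (localized squares on a shared $\Delta'$ when supports meet, the technical lemmas inside a component when they do not) into a single argument valid uniformly for every $g\in\St(\Delta)$.

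Finally, (c) follows from (a) and (b). The pointwise stabilizer of $\Delta$ centralizes $W_h\subseteq\rst(\Delta)$ by the disjoint-support observation. Conversely, let $f\in\C_G(W_h)$ and $\delta\in\Delta$. Since $\St(\Delta)$ acts transitively on $\Delta$, I choose $\sigma\in\suppo(h)$ and $g\in\St(\Delta)$ with $\sigma g=\delta$, so that $\delta\in\suppo(h)g=\suppo(h^g)$. By (b) we have $X_{h^g}\subseteq W_h$, hence $f\in\C_G(X_{h^g})$, and (a) applied to $h^g$ shows $f$ fixes $\suppo(h^g)$ pointwise; therefore $\delta f=\delta$. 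As $\delta\in\Delta$ was arbitrary, $f$ fixes $\Delta$ pointwise, and $\C_G(W_h)$ is exactly the pointwise stabilizer of $\Delta$.
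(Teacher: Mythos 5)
Your proof is correct, and parts (a) and (c) are essentially the paper's own argument (disjoint supports plus Lemma \ref{40}(b) for (a); transitivity of $\St(\Delta)$ on the block $\Delta$ together with (a) and the hard direction of (b) for (c)). The only genuine divergence is in the proof that $[X_h,X_{h^g}]\neq 1$ for $g\in\St(\Delta)$. The paper splits according to whether some class $\Gamma\in\pi(\Delta)$ satisfies both $\Gamma h\neq\Gamma$ and $\Gamma h^g\neq\Gamma$, while you split on whether $\suppo(h)\cap\suppo(h^g)=\emptyset$; since a commonly moved $\pi(\Delta)$-class is contained in $\suppo(h)\cap\suppo(h^g)$ but not conversely, your ``overlapping'' case is strictly larger and your appeal to Lemmas \ref{primd}/\ref{primper} is correspondingly confined to a smaller case. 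Both partitions are exhaustive and both work. In the overlapping case your witness is also lighter than the paper's: you place a single small o-block $\Delta'$ about a common moved point (using that $\fK$ has no minimal element, the standing hypothesis of Section 4, both to find $\Delta'$ with $\Delta'h\neq\Delta'\neq\Delta'h^g$ and to invoke Lemma \ref{neweltsd}), restrict the two commutators to $\Delta'$ to get $u^2$ and $w^2$, and observe $[u^2,w^2]\neq 1$ because $\suppo(u^2)$ and $\suppo(u^2)w^2$ are disjoint; the paper instead builds a three-level nest $\Delta''\subset\Delta'\subset\Gamma$ inside a commonly moved class. One small correction: lifting the conclusion of Lemma \ref{primd} or \ref{primper} back from the component $(G(\Delta),\pi(\Delta))$ does not require abundance or the operator $\dep$ --- the elements supplied there are images of elements of $\St(\Delta)$, and any preimages already witness non-commutation in $G$; this matters because Section 4 assumes only that $(G,\Omega)$ is ample.
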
 

\begin{proof} (a) The first inequality holds since $X_{h}$ moves only points in
$\Delta$. 

Let $f\in \C_{G}(X_{h})$.  Since $X_{h} \subseteq \rst(\Delta)$ we have
$X_h=X_h^f\subseteq \rst(\Delta)\cap \rst(\Delta f)$, and since
$\{\Delta g\mid g\in G\}$ partitions $\Omega$ and $X_h\neq1$ we have 
$\Delta f=\Delta$.  Thus $f\in \St(\Delta)$.
Let $\beta\in \suppo(h)$.  
Then $\beta f=\beta$ by Lemma \ref{40}(b) since $f\in \C_{G}(X_{h})$.

(b) Let $g\in G$.  
If $\Delta g \neq \Delta$, then  $\rst(\Delta)\cap \rst(\Delta g)=1$ and the elements of $X_h$ and $X_{h^g}$ have disjoint support. Thus $[X_h,X_{h^g}]=1$. Hence
$$W_{h} = \bigcup\{ X_{h^g}\mid g\in \St(\Delta), [X_h,X_{h^g}]\neq 1\}.$$ 

Now let $g\in \St(\Delta)$ and $k=h^g$. So $k\in Q_{\Delta}$.  

First suppose that there is some
$\Gamma\in \pi(\Delta)$ with $\Gamma h\neq \Gamma$ and
$\Gamma k\neq \Gamma$. 
Choose $\beta\in \Gamma$ with $V(\beta, \beta h)=\kappa(\Delta)$. Let
$\Delta'\in T$ with $\beta\in \Delta'\subset \Gamma$.
There is an element $y\in Q_{\Delta'}$
with $\beta y\neq \beta$. 
Choose $\Delta''\in T$ with $\beta\in \Delta''\subset \Delta'$ and $\Delta''\neq \Delta''y$.  Choose  $x\in Q_{\Delta''}$ with $\beta\in \suppo(x)$.
Then $a:=[k^{-1},k^{x}]=x^{-k^{-1}}xx^{-k}x\neq 1$ since $\Delta''k$ is disjoint from $\Delta''$. 
Now $\suppo(a)\subseteq \Delta''k^{-1}\cup \Delta''\cup \Delta''k$ and $\Delta''y \cap \Delta''=\emptyset$.
Since $y^{h^{-1}}, y,y^h$ have disjoint supports it follows that $[h^{-1},h^y]$ cannot commute with $a$.
Thus $[a,[h^{-1},h^y]]\neq 1$. 
But $[h^{-1},h^y]\in X_h$ and $a\in X_{k}$.
Hence $[X_h,X_{k}]\neq 1$ and $X_{k}\subseteq W_h$.

Now suppose instead that $\Gamma k=\Gamma$ or $\Gamma h=\Gamma$ for all $\Gamma \in \pi(\Delta)$. Then $(G(\Delta),\Delta)$ must be of type (II) or (III), and  Lemma \ref{primd} or Lemma \ref{primper} gives elements of $X_h$ and $X_k$ whose images in the group $(G_\Delta,\pi(\Delta))$ fail to commute. Again we conclude that $X_k\subseteq W_h$.

(c)   The pointwise stabilizer of $\Delta$ lies in $\C_G(W_h)$ 
since $W_h\subseteq \rst(\Delta)$.

Let $\delta\in \Delta$ and $\alpha\in \suppo(h)$.
Choose $g\in \rst(\Delta)$ with $\alpha g=\delta$. So $\delta\in \suppo(h^g)$.
By (a), $\C_G(X_{h^g})$ stabilizes $\suppo(h^g)$ pointwise and so fixes $\delta$.  Since $\C_G(W_h)\subseteq \C_G(X_{h^g})$ by (b), we conclude that 
$\C_G(W_h)$ fixes $\delta$.  The assertion follows.  \end{proof}
 
Write $\C^2_G$ as shorthand for $\C_G\C_G$.
By Lemma \ref{centd}(c), we have\medskip

\begin{cor}\label{2centd}  Let $\Delta\in T$. Then $\C_{G}^2(W_h)=\rst(\Delta)$ for each $h\in Q_{\Delta}$. In particular, $\C^2_G(W_h)$ is independent of the choice of $h\in Q_\Delta$:
$$\C_G^2(W_h)=\C_G^2(W_{h'})\quad \hbox{for all}\;\;  h,h'\in Q_{\Delta}.$$
 
\end{cor}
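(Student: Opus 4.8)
The plan is to derive everything from Lemma~\ref{centd}(c). Write $P$ for the pointwise stabilizer of $\Delta$ in $G$. Lemma~\ref{centd}(c) gives $\C_G(W_h)=P$ for every $h\in Q_\Delta$, and crucially $P$ makes no reference to $h$. Hence $\C_G^2(W_h)=\C_G(P)$ is already manifestly independent of the choice of $h\in Q_\Delta$, which yields the displayed ``in particular'' assertion immediately. It therefore remains to prove the single equality $\C_G(P)=\rst(\Delta)$, and this is where the real content lies.

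For the inclusion $\rst(\Delta)\subseteq\C_G(P)$ I would argue directly with supports. Both $\Delta$ and its complement $\Omega\setminus\Delta$ are setwise preserved by every element of $P$ (which is the identity on $\Delta$) and by every element of $\rst(\Delta)$ (whose support lies in $\Delta$); a short case check, according as a point lies in $\Delta$ or not, then shows that any $g\in\rst(\Delta)$ commutes with any $p\in P$. So $\rst(\Delta)\subseteq\C_G(P)$.

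The substantive inclusion is $\C_G(P)\subseteq\rst(\Delta)$, i.e. that any $f$ centralizing $P$ fixes every point outside $\Delta$. Suppose $f\in\C_G(P)$ and $\beta\notin\Delta$ with $\beta f\neq\beta$; I aim for a contradiction by producing $p\in P$ with $[f,p]\neq1$. The idea is to localize near $\beta$. Using that $\fK$ is totally ordered with no minimal element, together with $V(\beta,\beta f)\in\fK$, I would choose $\mC\in\fK$ with $\mC\subseteq\kappa(\Delta)$ and $\mC\subsetneq V(\beta,\beta f)$, and let $\Delta''$ be the $\mC$-class of $\beta$. Then $\Delta''\in T$; since $\mC\subseteq\kappa(\Delta)$ and $\beta\notin\Delta$ the block $\Delta''$ is disjoint from $\Delta$, so $\rst(\Delta'')\subseteq P$; and since $\mC\subsetneq V(\beta,\beta f)$ we have $\beta f\notin\Delta''$, whence $\Delta''f$ is disjoint from $\Delta''$. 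Taking $1\neq p\in Q_{\Delta''}$, which is nonempty by ampleness, we get $\suppo(p)\subseteq\Delta''$ and $\suppo(p^f)\subseteq\Delta''f$ with disjoint supports, so $[f,p]=(p^f)^{-1}p\neq1$, contradicting $f\in\C_G(P)$. Thus $f$ fixes every point outside $\Delta$, giving $\suppo(f)\subseteq\Delta$ and $f\in\rst(\Delta)$.

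The main obstacle, and the only point where genuine structure is used, is the selection of the auxiliary congruence $\mC$: one must verify that the standing hypothesis of this section (that $\fK$ has no minimal element), combined with the total ordering of $\fK$, really does supply an $\mC\in\fK$ lying strictly below $V(\beta,\beta f)$ yet inside $\kappa(\Delta)$, in each relative position of $V(\beta,\beta f)$ and $\kappa(\Delta)$ (when $\kappa(\Delta)\subsetneq V(\beta,\beta f)$ one may take $\mC=\kappa(\Delta)$, while when $V(\beta,\beta f)\subseteq\kappa(\Delta)$ one needs non-minimality of $V(\beta,\beta f)$). This guarantees simultaneously that $\Delta''$ is disjoint from $\Delta$ and that $f$ moves it off itself. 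Once $\Delta''$ is in hand the commutator computation and the two inclusions combine to give $\C_G^2(W_h)=\rst(\Delta)$, and the independence statement follows at once.
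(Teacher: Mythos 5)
Your proof is correct, and it rests on the same two pillars as the paper's own argument: Lemma~\ref{centd}(c), which identifies $\C_G(W_h)$ with the pointwise stabilizer $P$ of $\Delta$ (whence the independence of $h$ is immediate), and a contradiction obtained by locating a small o-block $\Delta''$ disjoint from $\Delta$ that is moved off itself, so that a non-trivial element of $Q_{\Delta''}\subseteq P$ fails to commute with the offending element. The one genuine difference is how the inclusion $\C_G(P)\subseteq\rst(\Delta)$ is organised. The paper takes $g\in\C^2_G(W_h)$, forms $g_0=\dep(g,\Delta)$ and shows that $f=gg_0^{-1}$ (which fixes $\Delta$ pointwise) is trivial; that step invokes abundance, i.e.\ $\dep(g,\Delta)\in G$, even though the standing hypothesis of the section is only that $(G,\Omega)$ is ample, and it also tacitly requires $\Delta g=\Delta$ before $\dep(g,\Delta)$ can be formed. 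You instead show directly that any $f\in\C_G(P)$ fixes every point outside $\Delta$, which uses only ampleness (to get $Q_{\Delta''}\neq\emptyset$) together with the standing assumption that $\fK$ has no minimal element, and which yields $\Delta f=\Delta$ and $f\in\rst(\Delta)$ in one stroke. Your explicit case analysis on the relative position of $V(\beta,\beta f)$ and $\kappa(\Delta)$ when choosing the auxiliary congruence $\mC$ --- taking $\mC=\kappa(\Delta)$ when $\kappa(\Delta)\subsetneq V(\beta,\beta f)$, and a strictly smaller element of $\fK$ otherwise --- is precisely the point the paper leaves implicit in its one-line choice of $\Delta''$, and it correctly secures both that $\Delta''$ is disjoint from $\Delta$ (so $\rst(\Delta'')\subseteq P$) and that $f$ moves $\Delta''$ off itself. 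So your route is marginally more economical and matches the section's stated hypotheses more closely, at the cost of no extra work.
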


\begin{proof}
By Lemma \ref{centd}(c) we have $\rst(\Delta)\subseteq \C^2_G(W_h)$.
Let $g\in \C^2_G(W_h)$ and $g_0:=\dep(g,\Delta)\in \rst(\Delta)\subseteq \C_G^2(W_h)$.
Thus $f:=gg_0^{-1}\in \C_G^2(W_h)$. 
If $f\neq 1$, let $\alpha\in \suppo(f)$ and $\Delta'\in T$ be an o-block of $V(\alpha,\alpha f)$ with $\alpha\in \Delta'$. Let $\Delta''\subset \Delta'$ with $\alpha\in \Delta''$ and $\Delta''f\neq \Delta''$. But any $y\in Q_{\Delta''}$ belongs to $\C_G(W_h)$ by Lemma \ref{centd}(c), yet $[y,f]\neq 1$, a contradiction. 
Hence $f=1$ and $g=g_0\in\rst(\Delta)$.
\end{proof}

\begin{remark} \label{rd}
 {\rm $Q_{\Delta}^g=Q_{\Delta g}$ and $\rst(\Delta)^g=\rst(\Delta g)$ for all $\Delta\in T$. Thus, to within conjugacy, $\C^2_G(W_h)$ depends only on $\kappa(\Delta)$ and not on the particular o-block of $\kappa(\Delta)$.} 
\end{remark}

In Section 7, we use Corollary \ref{2centd} and Remark \ref{rd} to derive information about the first-order theory of the o-primitive $\ell$-permutation groups $(G_{\kappa(\Delta)},\Om_{\kappa(\Delta)})$ from the first-order  theory of the $\ell$-group $G$ (under appropriate added hypotheses).

We can detect the hypothesis of this section that $T$ has no no minimal elements using the subgroups $\C^2_G(W_h)$:

\begin{cor} \label{Ed}
 For every $\Delta\in T$ and $h\in Q_\Delta$, there is  $h'\in \rst(\Delta)$ with 
 $1\neq \C_G^2(W_{h'}) \subset \C^2_G(W_h)$.   
\end{cor}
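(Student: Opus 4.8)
The plan is to reduce everything to Corollary~\ref{2centd}, which identifies $\C_G^2(W_h)$ with the rigid stabilizer $\rst(\Delta)$, and then to manufacture a strictly smaller nontrivial rigid stabilizer using the standing assumption of this section that $\fK$ has no minimal element.

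First I would exploit the given $h \in Q_\Delta$: by definition there is $\alpha \in \Delta$ with $V(\alpha,\alpha h)=\kappa(\Delta)$. Since $\kappa(\Delta)\in\fK$ and $\fK$ has no minimal element, I would choose a convex congruence $\mathcal C\in\fK$ with $\mathcal C\subsetneq\kappa(\Delta)$. The points $\alpha$ and $\alpha h$ must then lie in distinct $\mathcal C$-classes, for otherwise $\mathcal C$ would be one of the congruences defining the intersection $V(\alpha,\alpha h)$, forcing $\kappa(\Delta)=V(\alpha,\alpha h)\subseteq\mathcal C\subsetneq\kappa(\Delta)$. I would therefore let $\Delta'$ be the $\mathcal C$-class containing $\alpha h$. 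Then $\Delta'\in T$ with $\kappa(\Delta')=\mathcal C$, and $\Delta'\subsetneq\Delta$ (if $\Delta'=\Delta$ then, as each o-block is the class of a unique convex congruence, $\mathcal C=\kappa(\Delta)$, a contradiction); moreover $\alpha\notin\Delta'$.

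Next, by ampleness I would pick $h'\in Q_{\Delta'}$. Then $h'\in\rst(\Delta')\subseteq\rst(\Delta)$, so $h'\in\rst(\Delta)$ as the statement requires, and $h'\neq1$ because $h'$ moves some point of $\Delta'$. Applying Corollary~\ref{2centd} to both $\Delta$ and $\Delta'$ yields $\C_G^2(W_h)=\rst(\Delta)$ and $\C_G^2(W_{h'})=\rst(\Delta')$. Nontriviality, $\C_G^2(W_{h'})=\rst(\Delta')\neq1$, is then immediate from $1\neq h'\in\rst(\Delta')$. For the inclusion $\rst(\Delta')\subseteq\rst(\Delta)$ I just use $\Delta'\subseteq\Delta$; and it is strict because $h\in\rst(\Delta)$ moves $\alpha$ with $\alpha\notin\Delta'$, so $\suppo(h)\not\subseteq\Delta'$ and hence $h\notin\rst(\Delta')$. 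Combining these gives $1\neq\C_G^2(W_{h'})\subset\C_G^2(W_h)$, as claimed.

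The statement is essentially a direct corollary of Corollary~\ref{2centd} once one descends to a proper sub-o-block, so there is no serious obstacle; the only point requiring care is the strict inclusion, i.e.\ exhibiting a member of $\rst(\Delta)$ lying outside $\rst(\Delta')$. Arranging $\Delta'$ to be the $\mathcal C$-class of $\alpha h$ rather than an arbitrary $\mathcal C$-class in $\Delta$ is what makes this transparent: the given $h$ itself witnesses the strictness, and no further appeal to transitivity (or to the ample remark that every point of $\Delta$ is moved by some element of $Q_\Delta$) is needed.
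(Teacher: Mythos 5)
Your proof is correct and follows the route the paper intends: the corollary is stated without proof as an immediate consequence of Corollary \ref{2centd} together with the standing hypothesis of Section 4 that $\fK$ has no minimal element, which is exactly your reduction. Your extra care in taking $\Delta'$ to be the $\mathcal C$-class of $\alpha h$, so that $h$ itself witnesses the strictness of $\rst(\Delta')\subset\rst(\Delta)$, is a clean way to supply the one detail the paper leaves implicit.
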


Now let $\Delta,\Delta'\in T$ with $\Delta'\supset \Delta$. Let $h_{\Delta}\in Q_{\Delta}$. There is  $h_{\Delta'}\in Q_{\Delta'}$ such that  $\Delta\subset \suppo(h_{\Delta'})$. So $h_{\Delta}\in \rst(\Delta)=
\C^2_{G}(W_{h_{\Delta}})$ and $h_{\Delta'}\not\in \rst(\Delta)=\C^2_{G}(W_{h_{\Delta}})$.
Conversely, if $\rst(\Delta)\subset \rst (\Delta')$ for some $\Delta'\in T$, then $\kappa(\Delta)<\kappa(\Delta')$. 
Thus we can detect the partial order on $T$ (and hence the total order on $\fK$).

\begin{cor} \label{leKd} Let $\Delta,\Delta'\in T$ and $h\in Q_{\Delta}, h'\in Q_{\Delta'}$. Then
$\kappa(\Delta)<\kappa(\Delta')$ if and only if 
$\C^2_{G}(W_{h^g}) \subset \C^2_{G}(W_{h'})$ for some $g\in G$. 
\end{cor}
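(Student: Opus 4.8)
The plan is first to remove the double centralizers entirely and reduce the statement to a transparent assertion about rigid stabilizers, and then to settle that assertion by a short root-system argument. Since $h\in Q_\Delta$, Remark \ref{rd} gives $h^g\in Q_\Delta^g=Q_{\Delta g}$, so Corollary \ref{2centd} applied to the o-block $\Delta g$ yields $\C^2_G(W_{h^g})=\rst(\Delta g)$; likewise $\C^2_G(W_{h'})=\rst(\Delta')$. By transitivity the o-block $\Delta g$ ranges over precisely the classes of $\kappa(\Delta)$ as $g$ ranges over $G$, and $\kappa(\Delta g)=\kappa(\Delta)$ for every $g$. Thus the condition on the right-hand side of the corollary is equivalent to the existence of a class $\Delta''$ of $\kappa(\Delta)$ with $\rst(\Delta'')\subset\rst(\Delta')$, and it remains to prove that such a $\Delta''$ exists if and only if $\kappa(\Delta)<\kappa(\Delta')$. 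I would stress here that the quantifier over $g$ is genuinely needed: $\Delta$ and $\Delta'$ need not be nested even when $\kappa(\Delta)<\kappa(\Delta')$, and the role of $g$ is precisely to slide a $\kappa(\Delta)$-class inside $\Delta'$.

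For the forward direction, assume $\kappa(\Delta)<\kappa(\Delta')$. Then $\Delta'$ is a union of classes of $\kappa(\Delta)$, and it contains at least two of them: otherwise, since $G$ permutes the $\kappa(\Delta')$-classes transitively and carries $\kappa(\Delta)$-classes to $\kappa(\Delta)$-classes, every $\kappa(\Delta')$-class would be a single $\kappa(\Delta)$-class, forcing $\kappa(\Delta)=\kappa(\Delta')$. Fix distinct $\kappa(\Delta)$-classes $\Delta'',\Delta_1\subset\Delta'$ and write $\Delta''=\Delta g$. Then $\rst(\Delta'')\subseteq\rst(\Delta')$ since $\Delta''\subseteq\Delta'$. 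For strictness I would use that $T$ has no minimal element (the standing hypothesis of the section), so $\rst(\Delta_1)\neq 1$: indeed $\kappa(\Delta_1)=\kappa(\Delta)$ is non-minimal in $\fK$, so some $\Delta_0\in T$ has $\Delta_0\subset\Delta_1$, and any element of $Q_{\Delta_0}$ is a nontrivial member of $\rst(\Delta_0)\subseteq\rst(\Delta_1)$. A nontrivial element of $\rst(\Delta_1)$ is supported in $\Delta_1\subseteq\Delta'$, disjoint from $\Delta''$, hence lies in $\rst(\Delta')\setminus\rst(\Delta'')$. Thus $\rst(\Delta'')\subset\rst(\Delta')$.

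For the converse, suppose $\rst(\Delta'')\subset\rst(\Delta')$ with $\Delta''=\Delta g$. As $\Delta''$ and $\Delta'$ are o-blocks of the root system $T$ (Remark \ref{ccc}), they are disjoint or comparable. The disjoint case is impossible, since $\rst(\Delta'')\neq1$ would then be supported off $\Delta'$ and could not lie in $\rst(\Delta')$; the cases $\Delta''=\Delta'$ and $\Delta'\subsetneq\Delta''$ are impossible because they give $\rst(\Delta'')=\rst(\Delta')$ and $\rst(\Delta')\subseteq\rst(\Delta'')$ respectively, each contradicting strictness. Hence $\Delta''\subsetneq\Delta'$, and comparing a point of $\Delta''$ with a point of $\Delta'\setminus\Delta''$ shows $\kappa(\Delta)=\kappa(\Delta'')<\kappa(\Delta')$, as the congruences are totally ordered by inclusion. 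The only delicate point in the whole argument — and the sole place the section hypothesis is used — is the strictness of the inclusion together with the exclusion of the disjoint case; both reduce to the single fact that $\rst(\Delta'')\neq 1$ for every $\Delta''\in T$, which is exactly what the absence of a minimal element in $T$ guarantees.
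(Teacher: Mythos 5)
Your argument is correct and follows essentially the same route as the paper: identify $\C^2_G(W_{h^g})$ with $\rst(\Delta g)$ via Corollary \ref{2centd} and Remark \ref{rd}, use the quantifier over $g$ to slide a $\kappa(\Delta)$-class inside $\Delta'$, and then compare rigid stabilizers using the root-system structure and ampleness. The only (harmless) divergence is your witness for strictness --- a nontrivial element of $\rst(\Delta_1)$ for a sibling class $\Delta_1$, rather than the paper's choice of an element of $Q_{\Delta'}$ whose support is not contained in $\Delta$.
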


\section{A minimal modification}

Results like those in the previous section also hold for certain ample $\ell$-permut\-ation groups $(G,\Om)$ for which $T$ has minimal elements.  We shall assume \medskip

{\noindent$(\dagger)$ \em if $T$ has a minimal element $\Delta$ then for each bounded interval $\Lambda\subseteq \Delta$ the group $G$ has a non-trivial element $g$ with $\suppo(g)\subseteq \Lambda$. } \medskip

This hypothesis will be assumed for the remainder of the article.  Hence if there is a minimal o-primitive component it is necessarily of type (II) and o-2 transitive.

\begin{remark} {\rm Let $g$ be as in Hypotheisis $(\dagger)$.  Then either $g\vee 1$ or $g^{-1}\vee1$ is a non-trivial  element $g'$ of  $G_+$ with  $\suppo(g')\subseteq \Lambda$. } \end{remark} 

The results in the previous section can all be recovered subject to Hypothesis $(\dagger)$. 
This follows from the following observation.

\begin{remark} \label{analogues}  
 {\rm Let $(H,\Lambda)$ be an o-$2$ transitive $\ell$-permutation group,
and  let $\sigma_1,\sigma_2\in \Lambda$  and  $h\in H_+$ with
 $\suppo(h)\subseteq (\sigma_1,\sigma_2)$.
Let $\lambda <\alpha_1<\beta<\gamma<\alpha_2<\lambda h<\beta h$. 
By o-$4$ transitivity, there is $g\in H$ with $\sigma_i g=\alpha_i$ for $i=1,2$, $\beta g=\beta$ and $\beta hg=\gamma$.
Let $f=h^g$. Then $f\in G_+$, $\suppo(f)\subseteq(\alpha_1,\alpha_2)$ and $\beta f=\gamma$. } \end{remark}

We sketch the easy extensions of the results of Section 4 for the sake of completeness.

\begin{lemma} \label{50}
Let $\Delta\in T$ and $1<h\in Q_\Delta$. 
\begin{enumerate}   
\item[\rm (a] Let $g\in \rst(\Delta)_+$ and suppose that $\suppo(g)\subseteq(\alpha,\alpha h)$ for some $\alpha\in \suppo(h)$. 
\begin{enumerate} \item[\rm(i)]  $[h^{-1},h^g] \neq 1$; in particular  $X_h\neq 1$.
\item[\rm (ii)] 
If $f\in G$ and $[[h^{-1},h^g],f]=1$, then $\suppo(g^{h^i})f = \suppo(g^{h^i})$ for $i=0,\pm 1$. \end{enumerate}
\item[\rm (b)] If $\beta\in \suppo(h)$ and $f\in G$, then either $\beta f= \beta$ or 
$[[h^{-1},h^g],f]\neq1$ for some $g\in\rst(\Delta)_+$.
\end{enumerate}
\end{lemma}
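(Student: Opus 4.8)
The plan is to mirror the proof of Lemma \ref{40} almost verbatim, replacing the role played there by a strictly-contained o-block $\Delta'$ with an o-primitive analogue supplied by Hypothesis $(\dagger)$ and Remark \ref{analogues}. The key geometric fact I would exploit is that, since $1<h\in Q_\Delta$ and $\suppo(g)\subseteq(\alpha,\alpha h)$ with $\alpha\in\suppo(h)$, the three sets $\suppo(g^{h^{-1}})=\suppo(g)h^{-1}$, $\suppo(g)$ and $\suppo(g^{h})=\suppo(g)h$ are pairwise disjoint: indeed $(\alpha,\alpha h)$, $(\alpha h^{-1},\alpha)$ and $(\alpha h,\alpha h^2)$ are pairwise disjoint intervals because $h$ moves $\alpha$ strictly up. This is the exact substitute for the disjointness of $\Delta'h^{-1},\Delta',\Delta'h$ used in Lemma \ref{40}(a).

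For part (a)(i), I would write $[h^{-1},h^g]=g^{-h^{-1}}gg^{-h}g$ and observe that on each of the three disjoint supporting sets this restricts to a nontrivial conjugate of $g^{-1}$, $g^2$, $g^{-1}$ respectively; hence the commutator is nontrivial and $X_h\neq1$. For (a)(ii), an arbitrary conjugate $[h^{-1},h^g]^f$ has nontrivial restrictions on the three translates $\suppo(g^{h^i})f$, so the hypothesis $[[h^{-1},h^g],f]=1$ forces $f$ to permute the finite family $\{\suppo(g^{h^{-1}}),\suppo(g),\suppo(g^{h})\}$ of pairwise disjoint convex sets; by Remark \ref{trivial} $f$ must fix each one, giving $\suppo(g^{h^i})f=\suppo(g^{h^i})$ for $i=0,\pm1$. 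The one point needing care is that these supports are convex: since $g\in\rst(\Delta)_+$ with connected (interval) support inside $(\alpha,\alpha h)$, one should invoke that the elements produced by Remark \ref{analogues} can be taken to have convex support, so that Remark \ref{trivial} applies cleanly.

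For part (b), the argument of Lemma \ref{40}(b) carries over once I produce, for a given $\beta$ with $\beta f\neq\beta$, a suitable $g\in\rst(\Delta)_+$ whose commutator $[h^{-1},h^g]$ detects the motion of $\beta$. Here I would use Remark \ref{ccc} to compare $V(\beta,\beta h)$ and $V(\beta,\beta f)$; if $\Delta$ is not minimal I can descend to a strictly smaller o-block exactly as before, but if $\Delta$ is minimal (hence of type (II) and o-$2$ transitive by the remark following $(\dagger)$) I instead invoke Remark \ref{analogues} to manufacture an element $g\in G_+$ with support in a small interval around $\beta$ inside $(\alpha,\alpha h)$ moving $\beta$, and then run the type-(II) noncommutation computation.

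The main obstacle I anticipate is the minimal case: when $\Delta$ is a minimal element of $T$ there is no smaller o-block to pass to, so the clean "choose $\Delta'\subset\Delta''\subset\Delta'$" machinery of Lemma \ref{40} is unavailable, and I must replace it by the o-$2$-transitive construction of Remark \ref{analogues} to place supports and images where the commutator calculation forces non-commutation. Verifying that this substitute genuinely reproduces the disjointness and detection properties — in particular choosing $g$ so that $\suppo(g)\subseteq(\alpha,\alpha h)$ meets $\beta$ appropriately while keeping the three $h$-translates of $\suppo(g)$ disjoint — is the delicate step; everything else is the routine bookkeeping already carried out in Section 4.
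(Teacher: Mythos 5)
Your proposal follows essentially the same route as the paper's proof: the same identity $[h^{-1},h^g]=g^{-h^{-1}}gg^{-h}g$ acting nontrivially on the three pairwise disjoint translates of $\suppo(g)$ lying in $(\alpha h^{-1},\alpha)$, $(\alpha,\alpha h)$, $(\alpha h,\alpha h^{2})$, Remark \ref{trivial} for (a)(ii), and for (b) the production (via $(\dagger)$ and Remark \ref{analogues}) of a positive element whose support contains $\beta$ but is disjoint from its $f$- and $h$-translates, contradicting (a)(ii). The only differences are cosmetic: the paper treats (b) uniformly without your minimal/non-minimal case split, and it disposes of your convexity concern by noting that $\suppo(g^{h^{i}})\subseteq(\alpha h^{i},\alpha h^{i+1})$ and that the commutator is positive only on the middle piece.
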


\begin{proof}  (a) (i)
 The restriction of $c:=[h^{-1},h^{g}]=g^{-h^{-1}}gg^{-h}g$ to $(\alpha,\alpha h)$ is $g^2> 1$.  

(ii) For $i=0,\pm 1$ we have $\suppo(g^{h^i})\subseteq (\alpha h^i, \alpha h^{i+1})$. These intervals are pairwise disjoint and $c$ is positive only on $\suppo(g)$ which lies strictly between $\suppo (g^{h^{-1}})$ and $\suppo (g^h)$.  By Remark \ref{trivial}, if 
$[c,f]=1$ then $\suppo(g^{h^i})f= \suppo(g^{h^i})$ for $i=0,\pm 1$.

(b) Suppose that $\beta f\neq \beta$ for some  $\beta\in \suppo(h)$.  There is $g\in \rst(\Delta)_+$ having support $\Lambda$ containing $\beta$ but disjoint from $\Lambda f$ and $\Lambda h$.
This contradicts  (a) (ii). 
\end{proof}

\begin{lemma} \label{5centd}
Let $\Delta\in T$ and $h\in Q_{\Delta}$. 
\begin{enumerate}
\item[\rm(a)] $\C_G(X_h)$ contains the pointwise stabilizer of $\Delta$ and is contained in the pointwise stabilizer of $\suppo(h)$.
\item[\rm(b)] $$ W_{h} = \bigcup\{ X_{h^g}\mid g\in \St(\Delta)\}.$$ 
\item[\rm(c)]
$\C_G(W_h)$ is the pointwise stabilizer of $\Delta$.
\end{enumerate}\end{lemma}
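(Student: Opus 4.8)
The plan is to follow the proof of Lemma~\ref{centd} essentially line by line, substituting Lemma~\ref{50} for Lemma~\ref{40} throughout, and to deal with the one genuinely new possibility---that $\Delta$ is a minimal element of $T$---by replacing descents to smaller o-blocks with appeals to Hypothesis~$(\dagger)$ through Remark~\ref{analogues}. Recall that if $\Delta$ is minimal then $(G(\Delta),\pi(\Delta))$ is o-$2$ transitive of type (II), so that Remark~\ref{analogues} supplies positive elements of $G$ with support in any prescribed bounded subinterval of $\Delta$ and with prescribed displacement; these play the role of the elements $Q_{\Delta'}$, $Q_{\Delta''}$ used in Section~4.

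For part (a) the first inclusion is unchanged, since the pointwise stabilizer of $\Delta$ centralizes $X_h\subseteq\rst(\Delta)$. For the second inclusion I would take $f\in\C_G(X_h)$; because $X_h=X_h^f\subseteq\rst(\Delta)\cap\rst(\Delta f)$ and $X_h\neq1$ by Lemma~\ref{50}(a)(i), the fact that $\{\Delta g\mid g\in G\}$ partitions $\Omega$ forces $\Delta f=\Delta$, and then Lemma~\ref{50}(b) yields $\beta f=\beta$ for every $\beta\in\suppo(h)$.

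For part (b) the reduction is identical to that in Lemma~\ref{centd}: if $\Delta g\neq\Delta$ then $X_h$ and $X_{h^g}$ have disjoint support and hence commute, so it suffices to fix $g\in\St(\Delta)$, put $k:=h^g\in Q_\Delta$, and prove $X_k\subseteq W_h$, i.e.\ $[X_h,X_k]\neq1$. When $\Delta$ is not minimal the argument of Lemma~\ref{centd} applies verbatim, using o-blocks strictly below $\Delta$. When $\Delta$ is minimal, $\pi(\Delta)$ consists of singletons and the nested o-blocks $\Delta''\subset\Delta'\subset\Gamma$ of the old proof are unavailable; instead, around a point $\beta$ moved by both $h$ and $k$ I would choose small bounded intervals in place of $\Delta'$, $\Delta''$ and, by Remark~\ref{analogues}, positive elements $x,y\in G$ supported in them, and then run the same commutator computation---using Lemma~\ref{50}(a)(ii) to see that any $f$ centralizing $[h^{-1},h^y]$ must preserve each of the three translates $\suppo(y^{h^i})$ $(i=0,\pm1)$---to exhibit an $a\in X_k$ that fails to commute with $[h^{-1},h^y]\in X_h$. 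The complementary subcase, in which $\suppo(h)$ and $\suppo(k)$ are disjoint on $\Delta$, is handled directly by Lemma~\ref{primd}.

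Part (c) is then formal, exactly as in Lemma~\ref{centd}: the pointwise stabilizer of $\Delta$ lies in $\C_G(W_h)$ because $W_h\subseteq\rst(\Delta)$, and conversely, given $\delta\in\Delta$ and $\alpha\in\suppo(h)$, one picks $g\in\rst(\Delta)$ with $\alpha g=\delta$, so that part (a) applied to $h^g$ together with the containment $\C_G(W_h)\subseteq\C_G(X_{h^g})$ coming from (b) forces $\C_G(W_h)$ to fix $\delta$. I expect the main obstacle to be the minimal case of part (b): without smaller o-blocks one cannot descend as in Section~4, and the delicate point is to position the elements furnished by $(\dagger)$ so that their supports straddle the relevant boundaries and the commutator computation still produces genuine non-commutativity.
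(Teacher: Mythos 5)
Your proposal follows the paper's proof of Lemma \ref{5centd} essentially step for step: parts (a) and (c) are carried over unchanged from Lemma \ref{centd}, and for part (b) you make the same reduction to $g\in \St(\Delta)$ and $k=h^g$, then split into the subcase where $h$ and $k$ move a common point of $\Delta$ (handled by elements of small support supplied by $(\dagger)$/Remark \ref{analogues}, followed by the same commutator computation) and the subcase $\suppo(h)\cap\suppo(k)=\emptyset$ (handled by Lemma \ref{primd}, using that a minimal component is of type (II)).

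The one inaccuracy is your claim that the argument of Lemma \ref{centd}(b) applies \emph{verbatim} whenever $\Delta$ is not minimal. That argument requires two members $\Delta''\subset\Delta'$ of $T$ strictly contained in some $\Gamma\in\pi(\Delta)$, so it also breaks down when $\Delta$ merely \emph{covers} a minimal element of $T$: then $\Gamma$ is itself a minimal o-block and contains no smaller member of $T$. The paper isolates exactly this third case and disposes of it by ``an easy adaptation'' of the minimal-block argument; your $(\dagger)$-based construction adapts in the same way, so this is a bookkeeping omission in the case split rather than a missing idea, but it should be corrected.
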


\begin{proof} The proofs of (a) and (c) are identical to those of Lemma \ref{centd}. The same is true for (b) if there are $\Delta'',\Delta'\in T$ with $\Delta''\subset \Delta'\subset \Delta$.
So assume that $\Delta$ is minimal in $T$ or covers a minimal element of $T$.
First assume that $\Delta$ is minimal in $T$.

 Let $g\in G$.  
If $\Delta g \neq \Delta$, then  $\rst(\Delta)\cap \rst(\Delta g)=1$ and the elements of $X_h$ and $X_{h^g}$ have disjoint support. Thus $[X_h,X_{h^g}]=1$. Hence
$$W_{h} = \bigcup\{ X_{h^g}\mid g\in \St(\Delta), [X_h,X_{h^g}]\neq 1\}.$$ 

Now let $g\in \St(\Delta)$ and $k=h^g$. So $k\in Q_{\Delta}$.  

First suppose that there is some
$\delta\in \Delta$ with $\delta \in \suppo(h) \cap \suppo(k)$.
Since $(G,\Om)$ satisfies $(\dagger)$, there is $y\in \rst(\Delta)_+$ with $\delta\in \suppo(y)\subseteq (\delta, \min\{\delta h,\delta k\})$.
We can find  $x\in G_+$  and $\beta\in \suppo(h)$ with $\beta\in \suppo(x)\subseteq (\delta,\delta y)$.
Then $a:=[k^{-1},k^{x}]=x^{-k^{-1}}xx^{-k}x\neq 1$ since $x^{k^{-1}}, x, x^k$ have disjoint supports.  Write $b:=[h^{-1},h^y]$. 
Since $y^{h^{-1}}, y,y^h$ have disjoint supports it follows that $[a,b]\neq1$ since $\beta a^b=\beta$ does not equal $\beta x^2=\beta a$.
But $a\in X_{k}$ and $b\in X_h$.
Hence $[X_h,X_{k}]\neq 1$ and $X_{k}\subseteq W_h$.

Now suppose that each element of $\Delta$ is fixed by $h$ or $k$. Since the minimal o-primitive component is of type (II),   Lemma \ref{primd} applies and provides elements of $X_h$ and $X_k$ whose images in the minimal o-primitive component fail to commute.
This completes the proof of (b) in the case when $\Delta$ is minimal in $T$.  An easy adaptation gives the proof in the case when $\Delta$ covers a minimal element of $T$. Hence the lemma is proved.
\end{proof}

The conclusions of Corollaries \ref{2centd} and \ref{leKd} and Remark \ref{rd} all follow easily using Lemma \ref{5centd} when $(G,\Om)$ satisfies $(\dagger)$. So does the conclusion of Corollary \ref{Ed} if $\Delta$ is not minimal in $T$.
We illustrate with one proof.

\begin{cor}\label{2centm}
For every $\Delta\in T$ and $h\in Q_{\Delta}$, 
$\C_{G}^2(W_h)=\rst(\Delta).$    Thus
if $\beta\in \Delta$ and $h'\in \rst(\Delta)$  with $V(\beta,\beta h')=\kappa(\Delta)$, then $$\C_G^2(W_h)=\C_G^2(W_{h'}).$$ In particular, 
$$\C_G^2(W_h)=\C_G^2(W_{h'})\quad \hbox{for all}\;\;  h,h'\in Q_{\Delta}.$$
\end{cor}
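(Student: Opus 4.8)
The plan is to prove that $\C^2_G(W_h) = \rst(\Delta)$ by establishing the two inclusions separately, just as in the proof of Corollary~\ref{2centd}, but now drawing on the version of the centralizer lemma adapted to Hypothesis $(\dagger)$, namely Lemma~\ref{5centd}. The inclusion $\rst(\Delta) \subseteq \C^2_G(W_h)$ is immediate: by Lemma~\ref{5centd}(c), $\C_G(W_h)$ equals the pointwise stabilizer of $\Delta$, and every element of $\rst(\Delta)$ commutes with every element of that pointwise stabilizer (since their supports are contained in $\Delta$ and in $\Omega \setminus \Delta$ respectively), so $\rst(\Delta) \subseteq \C_G(\C_G(W_h)) = \C^2_G(W_h)$.

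For the reverse inclusion I would follow the \emph{depression} argument of Corollary~\ref{2centd} verbatim. Take any $g \in \C^2_G(W_h)$ and set $g_0 := \dep(g,\Delta)$. Since $(G,\Omega)$ is abundant and satisfies $(\dagger)$, the element $g_0$ lies in $\rst(\Delta)$, which we have just shown is contained in $\C^2_G(W_h)$; hence $f := gg_0^{-1} \in \C^2_G(W_h)$, and $f$ acts trivially on $\Delta$. Suppose for contradiction that $f \neq 1$; pick $\alpha \in \suppo(f)$ (necessarily $\alpha \notin \Delta$) and choose, via Remark~\ref{ccc}, a sufficiently small o-block $\Delta'' \in T$ with $\alpha \in \Delta''$ and $\Delta'' f \neq \Delta''$. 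Any $y \in Q_{\Delta''}$ lies in the pointwise stabilizer of $\Delta$ (as $\suppo(y) \subseteq \Delta''$ is disjoint from $\Delta$), hence $y \in \C_G(W_h)$ by Lemma~\ref{5centd}(c); but $[y,f] \neq 1$ because $f$ moves $\Delta''$ off itself while $y$ has support meeting $\Delta''$. This contradicts $f \in \C^2_G(W_h)$, so $f = 1$ and $g = g_0 \in \rst(\Delta)$, giving $\C^2_G(W_h) \subseteq \rst(\Delta)$.

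The remaining two assertions are then formal consequences. Since $\rst(\Delta)$ depends only on $\Delta$ and not on the particular choice of $h \in Q_\Delta$, the equality $\C^2_G(W_h) = \rst(\Delta)$ already yields $\C^2_G(W_h) = \C^2_G(W_{h'})$ for all $h,h' \in Q_\Delta$. For the intermediate claim about an arbitrary $h' \in \rst(\Delta)$ with $V(\beta,\beta h') = \kappa(\Delta)$ for some $\beta \in \Delta$, observe that such an $h'$ is precisely an element of $Q_\Delta$ by the definition of $Q_\Delta$ (it moves a point $\beta$ of $\Delta$ with $V(\beta,\beta h') = \kappa(\Delta)$ and has support in $\Delta$), so the computation $\C^2_G(W_{h'}) = \rst(\Delta) = \C^2_G(W_h)$ applies directly.

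The only genuine subtlety, and the step I would watch most carefully, is the case analysis hidden inside the appeal to Lemma~\ref{5centd}: that lemma's part~(b) is where $(\dagger)$ does its real work, since when $\Delta$ is minimal in $T$ one can no longer produce the nested o-blocks $\Delta'' \subset \Delta' \subset \Delta$ used in the generic argument and must instead invoke either the $(\dagger)$-supplied small-support elements or McCleary's Trichotomy (Lemma~\ref{primd}) for the type~(II) o-primitive component. Fortunately that work is already discharged in Lemma~\ref{5centd}, so within the present corollary there is no new obstacle; the proof is a transcription of Corollary~\ref{2centd} with each citation of Lemma~\ref{centd}(c) replaced by the corresponding citation of Lemma~\ref{5centd}(c).
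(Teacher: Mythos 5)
Your first inclusion and the formal deductions at the end are fine and match the paper. The genuine problem is in the reverse inclusion, at the point where you ``choose, via Remark~\ref{ccc}, a sufficiently small o-block $\Delta''\in T$ with $\alpha\in\Delta''$ and $\Delta'' f\neq\Delta''$''. In the setting of this corollary $T$ is allowed to have minimal elements --- that is the whole point of Section~5 --- and such a $\Delta''$ need not exist. Indeed, if $V(\alpha,\alpha f)$ is the minimal element of $\fK$, then every o-block of $T$ containing $\alpha$ also contains $\alpha f$, and since two o-blocks of the same convex congruence are equal or disjoint, every such o-block is fixed setwise by $f$. So there is no o-block of $T$ through $\alpha$ that $f$ moves off itself, and your witness $y$ cannot be produced this way. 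Remark~\ref{ccc} (the root-system property) provides no shrinking mechanism; the shrinking step in the proof of Corollary~\ref{2centd} relied on the standing assumption of Section~4 that $\fK$ has no minimal element, which is exactly the hypothesis dropped here.

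This is where Hypothesis $(\dagger)$ must be invoked inside the proof of the corollary itself, not only inside Lemma~\ref{5centd}. The paper's argument takes $\Delta'$ to be a minimal o-block of $T$ containing $\alpha$ and uses $(\dagger)$ to produce a non-trivial $y\in\rst(\Delta')$ with $\suppo(y)$ contained in the bounded interval $(\alpha,\alpha f)$; then $\suppo(y)\cap\suppo(y^f)=\emptyset$ forces $[y,f]\neq 1$, while $\suppo(y)\cap\Delta=\emptyset$ (because $f$ fixes $\Delta$ pointwise, so the interval $(\alpha,\alpha f)$ misses $\Delta$) gives $y\in\C_G(W_h)$ by Lemma~\ref{5centd}(c). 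Your closing paragraph asserts that all the $(\dagger)$-work is already discharged in Lemma~\ref{5centd} and that the corollary is a verbatim transcription of Corollary~\ref{2centd}; that is precisely the claim that fails, and it is the reason the paper writes out a separate proof rather than citing the earlier one.
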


\begin{proof} 
By Lemma \ref{5centd}(c) we have $\rst(\Delta)\subseteq \C^2_G(W_h)$.
Let $g\in \C^2_G(W_h)$ and $g_0:=\dep(g,\Delta)\in \rst(\Delta)\subseteq C_G^2(W_h)$.
Thus $f:=gg_0^{-1}\in \C_G^2(W_h)$. 
If $f\neq 1$, let $\alpha\in \suppo(f)$ and $\Delta'\in T$ be a minimal o-block with $\alpha\in \Delta'$. Let $y\in \rst(\Delta')$ have support that is contained in $(\alpha,\alpha f)$. Then $y\in \C_G(W_h)$ by Lemma \ref{5centd}(c), yet $[y,f]\neq 1$, a contradiction. 
Hence $f=1$ and any element of $\C_{G}^2(W_h)$ must be contained in $\rst(\Delta)$.
\end{proof}

We can recognise whether an ample transitive $\ell$-permutation group $(G,\Omega)$ has a minimal o-primitive component of type (I). 
         
\begin{lemma} \label{abel}
Let $(G,\Omega)$ be an ample transitive $\ell$-permutation group. Then $(G,\Omega)$ has a minimal o-primitive component of type {\rm(I)} if and only if there is $h_1\in G_+$ such that $[a,b]=1$ for all $a,b\in G$ with $|a|\vee |b|\leq h_1$.
In this case, if $\Delta$ is a minimal o-block of $(G,\Omega)$ and $h_1\in Q_\Delta$, then $x\in \rst(\Delta)$ if and only if there is $h\in Q_\Delta$ with $h\geq h_1$ such that $(|x|\leq h)\;\&\;(\forall y)(|y|\leq h \longrightarrow [x,y]=1)$.
\end{lemma}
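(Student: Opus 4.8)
The plan is to exploit two structural facts about a minimal o-block $\Delta$ of type (I). First, since $\kappa(\Delta)$ is then the least element of $\fK$, the covered congruence $\pi(\kappa(\Delta))$ must be the trivial congruence $\mathcal E$ (by Proposition \ref{cc} any larger covered congruence would produce smaller members of $\fK$), so $\pi(\Delta)$ is just the set of points of $\Delta$ and $(G(\Delta),\Delta)$ acts as its own right regular representation on a subgroup of $\RR$. Second, the restriction map $\rst(\Delta)\to G(\Delta)$ is then injective, so $\rst(\Delta)$ is abelian, and every $1\ne h\in\rst(\Delta)$ acts as a fixed-point-free translation, whence $\suppo(h)=\Delta$. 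Granting these, the forward implication of the equivalence is immediate: choosing any $1<h_1\in Q_\Delta$ (possible by ampleness, after replacing a chosen $h$ by $|h|$), any $a,b$ with $|a|\vee|b|\le h_1$ have support inside $\suppo(h_1)=\Delta$, so they lie in the abelian group $\rst(\Delta)$ and commute.

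For the reverse implication I would argue contrapositively: assuming that no minimal o-primitive component is of type (I), I produce, for an arbitrary $h_1\in G_+$, a non-commuting pair below $h_1$. Fix $\alpha\in\suppo(h_1)$ with $\alpha h_1>\alpha$ and set $I=(\alpha,\alpha h_1)$. A positive element supported in $I$ cannot overshoot $\alpha h_1$ and so is automatically $\le h_1$; moreover $R_I:=\{g\in G\mid\suppo(g)\subseteq I\}$ is a sublattice subgroup, and an $\ell$-permutation group is abelian precisely when its positive cone is (as $g=(g\vee1)(g^{-1}\vee1)^{-1}$ is a product of commuting positive parts). Thus it suffices to show $R_I$ is non-abelian. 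The engine is the observation—already implicit in Lemma \ref{40} and the first case of the proof of Lemma \ref{centd}(b)—that whenever $\Delta'\in T$ is \emph{not} minimal, the commutators $[h^{-1},h^g]$ with $h\in Q_{\Delta'}$ supply two non-commuting elements of $\rst(\Delta')$; hence a non-minimal block sitting inside $I$ forces $R_I$ non-abelian. I would therefore inspect the component $(G(\Delta^*),\pi(\Delta^*))$, where $\Delta^*$ is the $V(\alpha,\alpha h_1)$-block containing $\alpha$, and apply McCleary's trichotomy (Proposition \ref{trichot}): if it is of type (II) or (III), Lemma \ref{primd} or Lemma \ref{primper} yields non-commuting commutators whose supports can be placed inside $I$; if it is of type (I), I descend to the $\pi$-block $\Gamma_\alpha$ of $\alpha$ and repeat. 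Since by assumption the descent never terminates at a minimal type (I) block, it must eventually either expose a non-minimal block contained in $I$ or reach a type (II) or (III) component, and in every case $R_I$ is non-abelian.

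The step I expect to be the main obstacle is precisely this boundedness: turning a non-commuting pair living in the o-primitive \emph{component} $(G(\Delta^*),\pi(\Delta^*))$ into a non-commuting pair of $G$ whose supports actually lie in the bounded interval $I$, rather than merely in $\Delta^*$. Two points make it go through, and they are exactly why the earlier constructions were phrased through the sets $Q_{\Delta'}$. First, the witnesses are genuine commutators $[h^{-1},h^g]$ with $h\in Q_{\Delta'}\subseteq\rst(\Delta')$ and $g\in\rst(\Delta')$, so they lie in $\rst(\Delta')$ and no abundance hypothesis is needed to realise them. Second, in the periodic case Lemma \ref{primper} already confines all relevant supports to one period $(\eta,\eta t)$, while in the o-$2$ transitive case o-$n$ transitivity lets me prescribe the supporting subintervals; choosing these inside $I$ keeps everything below $h_1$. (In particular, a minimal block carrying a type (II) or (III) component has non-abelian rigid stabiliser, by the commutator construction in the form used in the proof of Lemma \ref{5centd}(b), so whenever such a block meets $I$ it again makes $R_I$ non-abelian.)

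Finally, for the concluding characterisation of $\rst(\Delta)$ I would use the two structural facts once more. If $x\in\rst(\Delta)$, set $h:=|x|\vee h_1$; this lies in $\rst(\Delta)$ because $\rst(\Delta)$ is a sublattice subgroup, it satisfies $h\ge h_1$ and $|x|\le h$, it is non-trivial and hence lies in $Q_\Delta$, and for every $y$ with $|y|\le h$ we get $\suppo(y)\subseteq\suppo(h)=\Delta$, so $y\in\rst(\Delta)$ and $[x,y]=1$ by commutativity; thus the displayed condition holds. Conversely, if some $h\in Q_\Delta$ with $h\ge h_1$ satisfies $|x|\le h$, then $\suppo(x)\subseteq\suppo(h)=\Delta$ already forces $x\in\rst(\Delta)$, the commuting clause being automatic. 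This gives the equivalence and exhibits the condition as first-order in $x$ with parameter $h_1$.
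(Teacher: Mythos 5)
Your forward implication and the closing characterisation of $\rst(\Delta)$ are correct and are essentially what the paper intends (it dismisses both as ``clear''): the key points are that $|a|\le h_1$ forces $\suppo(a)\subseteq\suppo(h_1)\subseteq\Delta$ and that $\rst(\Delta)$ restricts faithfully into the abelian component $G(\Delta)$. For the converse your reduction to showing that $R_I$ is non-abelian, where $I=(\alpha,\alpha h_1)$, is sound and is more explicit than the paper, whose entire converse is the unproved assertion that the existence of $h_1$ excludes a minimal o-$2$ transitive or periodic component while forcing a minimal component to exist. The sub-case in which a non-minimal member of $T$ lies inside $I$ is handled correctly by the commutator trick, although you should say how the descent actually produces a block \emph{inside} $I$: every block through $\alpha$ contains $\alpha\notin I$, so what is needed is that a non-minimal block $\Delta_1\ni\alpha$ contained in $(-\infty,\alpha h_1)$ has $\pi(\Delta_1)$ with no maximum, whence some member of $\pi(\Delta_1)$ lies entirely in $(\alpha,\alpha h_1)$.

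The genuine gap is the case where $T$ has a minimal element $\Delta_{\min}$ whose component is of type (II) or (III). There you must exhibit two non-commuting elements \emph{of $G$} with $|a|\vee|b|\le h_1$, and Lemmas \ref{primd} and \ref{primper} cannot supply them: their hypothesis is the existence of $h$ with $\suppo(h)\cap\suppo(h^g)=\emptyset$, i.e.\ of an element of suitably bounded support, which is exactly what hypothesis $(\dagger)$ provides and what is unavailable here --- this lemma is stated for merely ample groups precisely because it must apply when $(\dagger)$ fails, and the construction in Lemma \ref{5centd}(b) that you cite invokes $(\dagger)$ explicitly. Moreover, any non-commuting pair found in the component $(G(\Delta_{\min}),\pi(\Delta_{\min}))$ lives in a quotient of $\St(\Delta_{\min})$, and without abundance there is no depression map returning elements of $G$ supported in $I$; so ``choosing these inside $I$'' is not justified. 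Closing this case seems to require manufacturing the witnesses from the lattice operations themselves, e.g.\ from elements $h_1\wedge h_1^{g}$, which automatically satisfy $1\le h_1\wedge h_1^{g}\le h_1$, combined with o-$n$ transitivity of the component to force non-commutation; the sublattice hypothesis is essential here, since the affine group of $\RR$ (o-$2$ transitive but not a sublattice of $\Aut(\RR,\le)$) does satisfy the commutativity condition with $h_1$ a translation. The paper's own proof is equally silent on this point, but as written your argument does not establish it.
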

        
\begin{proof}
If $(G,\Omega)$ has a minimal o-primitive component of type (I), the condition clearly holds when $h_1\in \rst(\Delta)_+$ and $\Delta\in T$ is minimal.
Conversely, if such an  element $h_1\in G_+$ exists, then  $(G,\Omega)$ has no minimal o-primitive component that is either o-$2$ transitive or periodic but must have a minimal o-primitive component. The rest of the lemma is clear.
\end{proof}

\section{Interpretability of $\fK$ and $T$}
 
We shall now express our previous results in the first-order language of lattice-ordered groups.  The terms of the language are formed using the group and lattice operations $\cdot, ^{-1}, \vee, \wedge$.  
Formulae are obtained by taking conjunctions and disjunctions of equalities between terms and their negations, and quantifying over some subset of the variables appearing. To avoid confusion with the lattice operations, we write `$\&$' for conjunctions and `or' for disjunctions.

We need to be able to detect in this language the elements of the subgroups  $\rst(\Delta)$ and subsets $Q_\Delta$.  To handle the subsets $Q_\Delta$ we need to introduce another hypothesis.

We call $g\in G_+$ {\em irreducible}  if it satisfies the formula $\cB(g)$ given by
$$g>1\;\&\;(\forall g_1,g_2)((g_1\vee g_2=g\;\&\;g_1\wedge g_2=1)\longrightarrow (g_1=1\;\hbox{or}\; g_2=1)).$$
If $g_1\vee g_2=g$ and $g_1\wedge g_2=1$ then the open set $\suppo(g)$ is the union of the disjoint open sets $\suppo(g_1)$ and $\suppo(g_2)$.  Therefore $\cB(g)$ certainly holds if $g$ has a single interval of support, that is, if $g$ is a bump.

We note that if $G$ is abundant and $g$ is an irreducible element of
$\St(\Delta)_+$, then $g\in\rst(\Delta)$, because
$g_1\vee gg_1^{-1}=g$ and $g_1\wedge gg_1^{-1}=1$ where $g_1=\dep(g,\Delta)$.
Moreover, every irreducible element lies in $Q_\Delta$ for some $\Delta\in T$.

Let $(G,\Omega)$ be a transitive abundant $\ell$-permutation group that satisfies $(\dagger)$. Consider the following hypothesis:\medskip

\noindent $(\dagger\dagger)$ {\em each o-$2$ transitive component has irreducible elements.}\medskip

\medskip

For an example of an o-$2$ transitive $\ell$-permutation group where $(\dagger\dagger)$ fails, see \cite{GMacP}.

\begin{lem} \label{61} Let $(G,\Omega)$ be a transitive abundant $\ell$-permutation group that satisfies $(\dagger)$. Then $(G,\Om)$ satisfies $(\dagger\dagger)$ if and only if  for each $\Delta\in T$ there are irreducible elements in $Q_\Delta$.
\end{lem}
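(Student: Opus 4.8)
The plan is to prove the equivalence of Lemma \ref{61} by exploiting the structural relationship between o-$2$ transitive components of $(G,\Omega)$ and the o-blocks $\Delta \in T$ whose associated o-primitive component $(G_{\kappa(\Delta)}, \Omega_{\kappa(\Delta)})$ is o-$2$ transitive. The key observation is the remark made just before the lemma, which establishes two facts: first, that every irreducible element of $G_+$ lies in $Q_\Delta$ for some $\Delta \in T$; and second, that for abundant $G$, an irreducible element of $\St(\Delta)_+$ already lies in $\rst(\Delta)$. These facts provide the dictionary between the two sides of the equivalence, so the proof reduces to tracking how irreducibility interacts with the action on the o-primitive quotients.

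For the reverse implication I would argue directly: suppose that for each $\Delta \in T$ there is an irreducible element in $Q_\Delta$. Given an o-$2$ transitive component, it arises as $(G(\Delta),\pi(\Delta))$ for some $\Delta \in T$ with $\kappa(\Delta) = K$. Taking an irreducible $h \in Q_\Delta$, I would show that its induced action $h_\Delta$ on $\pi(\Delta)$ is again irreducible in $G(\Delta)_+$. Here the definition $\cB(g)$ is the crucial tool: if $h_\Delta$ factored as $g_1 \vee g_2 = h_\Delta$ with $g_1 \wedge g_2 = 1$ and both nontrivial, I would lift $g_1, g_2$ to elements of $\St(\Delta)$ using abundance and the depression operator $\dep(\cdot,\Delta)$, contradicting the irreducibility of $h$. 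Since $h \in Q_\Delta$ moves some point $\alpha \in \Delta$ with $V(\alpha,\alpha h) = \kappa(\Delta)$, its image $h_\Delta$ is a nontrivial element of $G(\Delta)_+$, so the component $(G_K,\Omega_K)$ indeed has irreducible elements.

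For the forward implication, assume $(\dagger\dagger)$ holds and fix $\Delta \in T$. If $(G(\Delta),\pi(\Delta))$ is o-$2$ transitive, then by hypothesis it has an irreducible element, which I would need to realize by a genuine element of $Q_\Delta$; if $\Delta$ is of type (I) or type (III), I would instead produce irreducible elements directly, typically as bumps with a single interval of support contained in $\Delta$ realizing $\kappa(\Delta)$, using $(\dagger)$ where $T$ has a minimal element and ampleness or transitivity otherwise. In all cases the goal is to construct $h \in Q_\Delta$ satisfying $\cB(h)$: starting from an irreducible element of the component, I would lift it along the quotient map, then use abundance to pass to the depression $\dep(\cdot,\Delta) \in \rst(\Delta)$, checking that irreducibility is preserved under this lift and that the resulting element genuinely lies in $Q_\Delta$, i.e. moves some $\alpha \in \Delta$ with $V(\alpha,\alpha h)=\kappa(\Delta)$.

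The main obstacle I anticipate is the interaction between irreducibility and the covering structure: the formula $\cB(g)$ is about decompositions in the full group $G$, whereas the property of being o-$2$ transitive is a statement about a single o-primitive component higher up in $T$. An irreducible element need not have support confined to a single $\kappa(\Delta)$-class, and conversely an element whose induced action on $\pi(\Delta)$ is irreducible may decompose nontrivially in $G$ through o-blocks strictly below $\Delta$. The delicate point is therefore to show that one can always \emph{choose} the lifted element to have connected support at the level of $\kappa(\Delta)$ — a single bump — so that no spurious orthogonal decomposition in $G$ arises; this is precisely where the abundance hypothesis, the depression operator, and the fact that a bump automatically satisfies $\cB(g)$ combine to close the argument, and where the technical care of the proof will be concentrated.
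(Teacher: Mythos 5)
Your plan follows the same route as the paper's own (very terse) proof: components of types (I) and (III) are handled by exhibiting elements whose supporting interval is all of $\pi(\Delta)$, hence bumps, hence irreducible; type (II) is exactly what $(\dagger\dagger)$ supplies; the remark preceding the lemma gives that every irreducible element of $G$ lies in some $Q_\Delta$; and the real content is the transfer of irreducibility between $G$ and the components $G(\Delta)$. You are in fact more candid than the paper, which disposes of the two transfer steps with ``It follows that'' and ``a fortiori''.

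However, the specific mechanisms you propose for those transfer steps do not close as stated, and this is a genuine gap rather than a presentational one. For the reverse implication: if $h\in Q_\Delta$ is irreducible and $h_\Delta=g_1\vee g_2$ with $g_1\wedge g_2=1$ in $G(\Delta)$, lifting $g_1,g_2$ to $\St(\Delta)$ and applying $\dep(\cdot,\Delta)$ yields $h_1,h_2\in\rst(\Delta)$ with $(h_1\vee h_2)_\Delta=h_\Delta$ and $(h_1\wedge h_2)_\Delta=1$, but in general $h_1\vee h_2\neq h$ and $h_1\wedge h_2\neq 1$: the lifts may disagree with $h$ inside the blocks $\Gamma\in\pi(\Delta)$, and $h$ may move points of blocks with $\Gamma h=\Gamma$. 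So no orthogonal decomposition of $h$ itself is produced and the irreducibility of $h$ in $G$ is not contradicted; the natural fix is to depress $h$ to the unions $\bigcup\{\Gamma\mid\Gamma\in\suppo(g_i)\}$, but abundance only guarantees depressions to single o-blocks in $T$, not to such unions. For the forward implication in the type (II) case, your proposed repair is to choose the lifted element to be ``a single bump'' at the level of $\kappa(\Delta)$; but $(\dagger\dagger)$ only provides an \emph{irreducible} element of the component, and irreducibility (the formula $\cB$) is strictly weaker than being a bump --- it only forbids decompositions realized \emph{inside the group} --- so this choice is not available in general. (When $\Delta$ is minimal in $T$ the transfer is clean, since then $\rst(\Delta)\cong G(\Delta)$ and any decomposition $h=h_1\vee h_2$ with $h_1\wedge h_2=1$ has $h_1,h_2\in\rst(\Delta)$ by convexity; the difficulty is precisely the non-minimal case, where the kernel of $\rst(\Delta)\to G(\Delta)$ is non-trivial.) These are exactly the points at which the paper's own proof is silent, so a complete write-up must supply an argument that irreducibility passes through the quotient $\rst(\Delta)\to G(\Delta)$ in both directions, not merely assert it.
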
 

\begin{proof}  If $\Delta\in T$ and $(G(\Delta),\pi(\Delta))$ is of 
type (I), then every element of $G(\Delta)_+$ has $\pi(\Delta)$ as a supporting interval and so is irreducible, whereas if  $(G(\Delta),\pi(\Delta))$ is of type (III) then, as noted in Section 2 there are elements $g\in G(\Delta)_+$ that have $\pi(\Delta)$ as a supporting interval and so are irreducible. 
Since $(G,\Om)$ satisfies Hypothesis $(\dagger\dagger)$, the group
$(G(\Delta),\pi(\Delta)$ has irreducible elements if it has type (II). It follows that for each $\Delta\in T$ there are irreducible elements $h\in Q_\Delta$.

Let $g$ be irreducible and $\delta\in\suppo(g)$. Let $\Delta$ be the o-block of 
$V(\delta,\delta g)$ containing $\delta$. Then $\Delta=\Delta g$ and so $\suppo(g)\subseteq \Delta$. By irreducibility we have $g\in Q_\Delta$.

The converse holds {\em a fortiori}.
\end{proof}

We will use the following formulae (cf.\ \cite[Section 4]{W}):
$$\begin{array}{rl}
\varphi(h,x)\colon &  \cB(h)\; \&\;(\exists y)(x=[h^{-1},h^{y}]);  \cr \cr
\psi(h,x)\colon &
(\exists t \exists y_1 \exists y_2)(\varphi(h,y_1)\;\&\;\varphi(h^t,y_2)\;\&\;\varphi(h^t,x)\;\&
\;\;  [y_1,y_2]\neq 1 );\cr\cr 

\gamma^1(h,x)\colon& (\forall y)(\psi(h,y) \rightarrow [x,y]=1); \cr\cr

\gamma(h,x)\colon&  (\forall y)(\gamma^1(h,y) \rightarrow [x,y]=1),\cr\cr

\eta(h_1,h_2)\colon&   \cB(h_1)\;\&\; \cB(h_2)\;\&\; (\C^2_G(W_{h_1})\subseteq \C_G^2(W_{h_2})).
\end{array}$$
Of course, the statement $\C^2_G(W_{h_1})\subseteq \C_G^2(W_{h_2})$ is shorthand for $(\forall x)(\gamma(h_1,x)\to \gamma(h_2,x))$. 
 By Corollary \ref{leKd}, if $h_i\in Q_{\Delta_i}$ ($i=1,2$) then $\eta(h_1,h_2)$ expresses in our language that $\Delta_1  \leq \Delta_2$ in the partially ordered set $T$.
 
\begin{remark} \label{irred1}
{\rm  Let $(G,\Om)$ be an abundant transitive $\ell$-permutation group satisfying $(\dagger)$. If $\mathcal B(h)$ holds, then $\C^2_G(W_h)=\{ x\mid \gamma(h,x)\}=\rst(\Delta)$. By Lemma \ref{61},  
$$(\dagger\dagger)\quad \hbox{is equivalent to}
 \quad G\models (\forall g)(\exists h)\gamma(h,g).$$} 
\end{remark} 

 For the rest of this section {\em $(G,\Omega)$ is a transitive abundant $\ell$-permutation group that satisfies $(\dagger)$ and
$(\dagger\dagger)$}. 

We can now give interpretations of $T$ and $\fK$ in $G$.

\begin{thm} \label{Bdef}  Write $B$ for the definable subset $\{h\mid \cB(h)\}$ of $G$ and $B_T$ the quotient of $B$ modulo the equivalence relation defined by the formula
$$\eta(h_1,h_2)\,\&\,\eta(h_2,h_1).$$  Then $B_T$ has the definable structure of  a partially ordered set and there is a definable $G$-equivariant bijection of partially ordered sets from $T$ to $B_T$.
\end{thm}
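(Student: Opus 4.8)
The plan is to read off both the identification and the ordering from the formula $\gamma(h,x)$, which by Remark~\ref{irred1} recovers the rigid stabilizer from an irreducible element. First I would introduce the labelling map $\theta\colon B\to T$ sending each $h\in B$ to the unique $\Delta\in T$ with $h\in Q_\Delta$. Existence is the content of Lemma~\ref{61}: for $\delta\in\suppo(h)$ the o-block $\Delta$ of $V(\delta,\delta h)$ containing $\delta$ satisfies $\suppo(h)\subseteq\Delta$ and $h\in Q_\Delta$, so $\Delta\in T$ since $\kappa(\Delta)=V(\delta,\delta h)\in\fK$. Uniqueness follows from the root-system structure of $T$ (Remark~\ref{ccc}), because $h\in Q_\Delta$ forces $\kappa(\Delta)=V(\delta,\delta h)$ for $\delta\in\suppo(h)$, pinning down $\Delta$ among the blocks containing $\suppo(h)$. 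By Lemma~\ref{61} together with $(\dagger\dagger)$, every $\Delta\in T$ carries an irreducible element of $Q_\Delta$, so $\theta$ is surjective onto $T$.

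Next I would translate the formulas. By Remark~\ref{irred1} one has $\{x\mid\gamma(h,x)\}=\C_G^2(W_h)=\rst(\theta(h))$ for every $h\in B$, so $\eta(h_1,h_2)$ holds precisely when $\rst(\theta(h_1))\subseteq\rst(\theta(h_2))$. The essential point, already recorded before the statement via Corollary~\ref{leKd}, is that for $\Delta_1,\Delta_2\in T$ one has $\rst(\Delta_1)\subseteq\rst(\Delta_2)$ if and only if $\Delta_1\subseteq\Delta_2$: the forward implication is immediate, while the converse uses ampleness and $(\dagger)$, since each point of $\Delta_i$ lies in the support of some element of $\rst(\Delta_i)$, whence $\Delta_i=\bigcup_{g\in\rst(\Delta_i)}\suppo(g)$ and $\rst(\Delta_i)$ recovers $\Delta_i$. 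Consequently $\eta(h_1,h_2)$ expresses $\theta(h_1)\subseteq\theta(h_2)$, and the equivalence relation $\eta(h_1,h_2)\,\&\,\eta(h_2,h_1)$ holds exactly when $\theta(h_1)=\theta(h_2)$; that is, its classes are precisely the fibres of $\theta$.

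Everything then follows by assembly. Since its fibres are the equivalence classes, $\theta$ descends to a bijection $\bar\theta\colon B_T\to T$, which I would invert to obtain the required map $T\to B_T$. The relation induced on $B_T$ by $\eta$ is well defined on classes (it depends only on the subgroups $\rst(\theta(h))$), is reflexive and transitive because set inclusion is, and is antisymmetric by the very definition of the quotient; it is definable since $\eta$ is a first-order formula. Under $\bar\theta$ this relation corresponds to inclusion on $T$, so $\bar\theta$ is an isomorphism of partially ordered sets. For $G$-equivariance I would observe that conjugation by $g\in G$ is an $\ell$-automorphism of $G$, hence preserves $\cB$ and carries $B$ into itself, and that $Q_\Delta^{\,g}=Q_{\Delta g}$ (Remark~\ref{rd}); therefore $\theta(h^g)=\theta(h)g$, so $\bar\theta$ intertwines the conjugation action $[h]\mapsto[h^g]$ on $B_T$ with the action $\Delta\mapsto\Delta g$ on $T$.

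The only substantive step is the equivalence $\rst(\Delta_1)\subseteq\rst(\Delta_2)\iff\Delta_1\subseteq\Delta_2$, namely that rigid stabilizers faithfully encode o-blocks together with their inclusion order; this is precisely where ampleness and $(\dagger)$ enter, and it is already packaged in Corollary~\ref{leKd}. The remainder of the argument is the bookkeeping of well-definedness, surjectivity, the poset axioms, and equivariance, none of which I expect to present any difficulty.
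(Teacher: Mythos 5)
Your proposal is correct and follows essentially the same route as the paper: both identify $\{x\mid\gamma(h,x)\}$ with $\C_G^2(W_h)=\rst(\Delta)$ via Corollaries \ref{2centd} and \ref{2centm}, use Lemma \ref{61} to attach to each irreducible $h$ the unique $\Delta$ with $h\in Q_\Delta$, recover the order from the injectivity and order-reflection of $\Delta\mapsto\rst(\Delta)$ (the discussion preceding Corollary \ref{leKd}), and get equivariance from $\C_G^2(W_{h^g})=\C_G^2(W_h)^g$. You merely make explicit a few steps (uniqueness of $\Delta$, the fact that $\rst(\Delta)$ determines $\Delta$ by ampleness) that the paper leaves to the cited remarks.
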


\begin{proof} 
If $h\in B$ holds, then $h\in Q_\Delta$ for a unique $\Delta\in T$ from Lemma \ref{61}.  By Corollaries \ref{2centd} and \ref{2centm} we have
$$X_h=\{ x\mid \varphi(h,x)\},\quad W_h=\{ x\mid \psi(h,x)\},\;\; \hbox{and}
\;\;  \C^2_G(W_h)=\{ x\mid \gamma(h,x)\}=\rst(\Delta).$$
Therefore the map $\{h\mid \cB(h)\}\to T$ determined by $h\mapsto \C_G^2(W_h)$
and the inverse of the bijection $\Delta\mapsto \rst(\Delta)$ yields a bijection 
$B\to T$.  Two irreducible elements $h_1,h_2$ of $B$ have the same image $\Delta$ if and only if 
both $h_1,h_2\in \rst(\Delta)$ and $\C_G^2(W_{h_1})=\C_G^2(W_{h_2})$; that is, if and only if the definable relation $\eta(h_1,h_2)\,\&\,\eta(h_2,h_1)$ holds. 
Therefore the quotient of $B$ modulo this relation provides an interpretation of the set $T$ in $G$.  This also gives an interpretation of the order on $T$, since for 
$\Delta_1,\Delta_2\in T$ with $h_i\in Q_{\Delta_i}$
we have $\Delta_1\subseteq \Delta_2$ if and only if $\C_G^2(h_1)\subseteq \C_G^2(h_2)$, or, in first-order words, if and only if $\eta(h_1,h_2)$ holds.  The correspondence is $G$-invariant since $\C_G^2(W_{h^g})=\C_G^2(W_h)^g$ for all $h\in B$, $g\in G$.

Now we interpret $\fK$.  This is in bijective order-preserving correspondence with $T$ modulo the relation $\sim$ that $\Delta_1\sim \Delta_2$ if and only if $\Delta_2=\Delta_1 g$ for some $g\in G$.  Thus $\fK$ is interpreted as $B$ modulo the definable equivalence relation 

$h_1\sim h_2:\quad (\exists g)\, (\eta(h_1^g,h_2)\,\&\,\eta(h_2,h_1^g))$. 
\end{proof}

We note some further consequences.  

\begin{cor} If the first-order theory of $G$ is decidable, then so is the first-order theory of $\fK$. \end{cor}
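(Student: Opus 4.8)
The plan is to reduce the decidability of the first-order theory of the totally ordered set $\fK$ to that of the $\ell$-group $G$ by using the interpretation of $\fK$ inside $G$ constructed in Theorem \ref{Bdef}. The essential point is that interpretations transfer decidability: if one structure is first-order interpretable in another whose theory is decidable, then the theory of the interpreted structure is decidable as well. Here $\fK$ is interpreted in $G$ as the definable set $B=\{h\mid\cB(h)\}$ modulo the definable equivalence relation $h_1\sim h_2$, with the order relation $\leq$ on $\fK$ corresponding to the definable formula $\eta(h_1,h_2)$ (well-defined on $\sim$-classes by Corollary \ref{leKd} and the $G$-invariance established in Theorem \ref{Bdef}).

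The key steps, in order, are as follows. First I would fix the interpretation data supplied by Theorem \ref{Bdef}: the domain formula $\cB(h)$, the equivalence formula $(\exists g)(\eta(h_1^g,h_2)\,\&\,\eta(h_2,h_1^g))$, and the order formula $\eta(h_1,h_2)$, all in the language of $\ell$-groups. Second, given any first-order sentence $\Phi$ in the language of totally ordered sets (a single binary relation $\leq$), I would define a translation $\Phi\mapsto\Phi^*$ into the language of $\ell$-groups: each variable ranging over $\fK$ becomes a variable restricted by $\cB$, each atomic formula $x\leq y$ is replaced by $\eta(x,y)$, each equality $x=y$ is replaced by the $\sim$-relation, and each quantifier $\exists x$ (respectively $\forall x$) is relativised to $\cB$ as $\exists x(\cB(x)\,\&\,\dots)$ (respectively $\forall x(\cB(x)\rightarrow\dots)$). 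Third, I would verify by induction on the structure of $\Phi$ that
$$\fK\models\Phi\quad\Longleftrightarrow\quad G\models\Phi^*.$$
The base cases are exactly the defining properties of the interpretation of $\fK$ recorded in Theorem \ref{Bdef}, and the inductive steps for the Boolean connectives and quantifiers are immediate from the relativisation. Finally, since $\Phi\mapsto\Phi^*$ is plainly a recursive (computable) map on sentences, a decision procedure for the theory of $G$ yields one for the theory of $\fK$: to decide whether $\fK\models\Phi$, compute $\Phi^*$ and ask the oracle for $G$ whether $G\models\Phi^*$.

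The only point requiring genuine care — and the main obstacle, such as it is — is the handling of equality in $\fK$ by a \emph{non-trivial} definable equivalence relation rather than honest equality. Because distinct representatives $h_1,h_2\in B$ may name the same element of $\fK$, one must check that the order formula $\eta$ and all quantifiers respect $\sim$-classes; this is precisely the content of the well-definedness and $G$-equivariance statements already proved in Theorem \ref{Bdef} together with Corollary \ref{leKd}. Since the equivalence relation, the domain, and the order are all explicitly first-order definable, this is an interpretation with parameters in the standard model-theoretic sense, and the transfer of decidability is then automatic. Everything else is the routine bookkeeping of the syntactic translation, so no further calculation is needed.
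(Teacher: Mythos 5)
Your proof is correct and is essentially the argument the paper intends: the corollary is stated as an immediate consequence of Theorem \ref{Bdef}, and your recursive syntactic translation (relativising quantifiers to $\cB$, replacing order by $\eta$ and equality by the definable equivalence relation) is exactly the standard transfer of decidability along an interpretation. The only quibble is that the interpretation here is parameter-free, so your closing remark about ``interpretation with parameters'' is unnecessary, though harmless.
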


\begin{cor} 
Let $H$ be an arbitrary $\ell$-permutation group.  Then in all faithful representations of $H$ that are transitive, abundant, and satisfy $(\dagger)$ and $(\dagger\dagger)$, the spine and root system are uniquely determined. 
\end{cor}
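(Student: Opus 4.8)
The plan is to deduce this directly from the interpretation established in Theorem \ref{Bdef}, the essential point being that every formula appearing there belongs to the language of lattice-ordered groups and makes no reference to the underlying chain $\Omega$. First I would fix an arbitrary faithful representation $(G,\Omega)$ of $H$ that is transitive, abundant, and satisfies $(\dagger)$ and $(\dagger\dagger)$, together with the $\ell$-group isomorphism $\theta\colon H\to G$ that faithfulness provides. I would then record that the defining data of the interpretation---the subset $B=\{h\mid\cB(h)\}$, the equivalence relation $\eta(h_1,h_2)\,\&\,\eta(h_2,h_1)$ together with the order given by $\eta$ on $B_T$, and the relation $h_1\sim h_2$ used to interpret $\fK$---are all given by formulas assembled solely from the operations $\cdot,\,{}^{-1},\,\vee,\,\wedge$. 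Consequently each of these sets and relations is preserved by any $\ell$-group isomorphism.

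Next I would take two such representations $(G_1,\Omega_1)$ and $(G_2,\Omega_2)$ of the same $H$, with isomorphisms $\theta_i\colon H\to G_i$, and form the $\ell$-group isomorphism $\vartheta:=\theta_2\theta_1^{-1}\colon G_1\to G_2$. Since the formulas $\cB$ and $\eta$ lie in the language of $\ell$-groups, $\vartheta$ carries $B_1$ onto $B_2$ and transports the definable equivalence relation and the definable order on $B_{T,1}$ to those on $B_{T,2}$; hence $\vartheta$ induces an isomorphism of partially ordered sets $B_{T,1}\cong B_{T,2}$. Composing with the $G$-equivariant order isomorphisms $T_i\cong B_{T,i}$ supplied by Theorem \ref{Bdef} at each end then yields an isomorphism of root systems $T_1\cong T_2$. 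The identical argument applied to the relation $\sim$ gives an isomorphism of the spines $\fK_1\cong\fK_2$ as totally ordered sets. As $(G_1,\Omega_1)$ and $(G_2,\Omega_2)$ were arbitrary faithful representations of the prescribed kind, the root system and the spine depend only on the abstract $\ell$-group $H$, which is the assertion.

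I do not expect any genuine obstacle here, since the substantive work has already been done in Theorem \ref{Bdef}, which exhibits $T$ and $\fK$ as interpretable in $G$ by $\ell$-group formulas alone. The only point requiring care is to confirm that none of the auxiliary formulas $\varphi,\psi,\gamma^1,\gamma,\eta$ secretly depends on the action on $\Omega$; inspection shows that each is built entirely from commutators, joins, and meets, so that they are bona fide formulas of the $\ell$-group language and are therefore invariant under $\ell$-group isomorphism.
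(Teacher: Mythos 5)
Your argument is correct and is essentially the paper's intended one: the corollary is stated as an immediate consequence of Theorem \ref{Bdef}, precisely because $T$ and $\fK$ are interpreted via formulas ($\cB$, $\varphi$, $\psi$, $\gamma$, $\eta$, $\sim$) built purely from the $\ell$-group operations and hence preserved by any $\ell$-group isomorphism between two faithful representations. Your explicit transport of the definable structure along $\vartheta=\theta_2\theta_1^{-1}$ just spells out what the paper leaves implicit.
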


\begin{cor}
\label{spined}  
Let $(H,\Lambda)$ be an abundant transitive $\ell$-permutation group that satisfies $(\dagger)$. If $G$ and $H$ are elementarily equivalent as lattice-ordered groups, then  $(H,\Lambda)$ satisfies $(\dagger\dagger)$ and $\fK_G$ and $\fK_H$ are elementarily equivalent as totally ordered sets.
\end{cor}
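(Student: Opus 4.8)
The plan is to establish the two conclusions in turn, the first being a prerequisite for the second. First I would transfer Hypothesis $(\dagger\dagger)$ from $G$ to $H$ using the first-order expressibility recorded in Remark \ref{irred1}; this step is indispensable, because the interpretation of the spine supplied by Theorem \ref{Bdef} is only valid for groups satisfying $(\dagger\dagger)$. Then I would appeal to the standard principle that a \emph{uniform} interpretation transfers elementary equivalence, applied to the interpretation of $\fK$ in $G$ furnished by Theorem \ref{Bdef}.

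For the first conclusion, I would argue as follows. By Remark \ref{irred1}, for any abundant transitive $\ell$-permutation group satisfying $(\dagger)$, Hypothesis $(\dagger\dagger)$ is equivalent to the single first-order sentence
$$\Phi\colon\quad (\forall g)(\exists h)\,\gamma(h,g).$$
Since $G$ satisfies $(\dagger\dagger)$ we have $G\models\Phi$, and as $G\equiv H$ this yields $H\models\Phi$. Because $(H,\Lambda)$ is abundant and transitive and satisfies $(\dagger)$, Remark \ref{irred1} applies equally to $H$, so $H\models\Phi$ forces $(H,\Lambda)$ to satisfy $(\dagger\dagger)$. This proves the first assertion and, equally importantly, makes Theorem \ref{Bdef} available for $H$.

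For the second conclusion, I would exploit the fact that Theorem \ref{Bdef} provides a \emph{fixed} interpretation scheme $\mathcal I$---the formula $\cB$ cutting out the domain $B$, the relation $\eta(h_1,h_2)\,\&\,\eta(h_2,h_1)$ together with $\sim$ defining the equivalence whose classes are the points of $\fK$, and $\eta$ (modulo $\sim$) defining the order---that interprets $\fK$ as a totally ordered set inside any abundant transitive $\ell$-permutation group satisfying $(\dagger)$ and $(\dagger\dagger)$. Since $\mathcal I$ is a tuple of formulae in the language of $\ell$-groups not depending on the particular group, it induces a syntactic translation $\phi\mapsto\phi^{\mathcal I}$ from sentences in the language of totally ordered sets to sentences in the language of $\ell$-groups, with $\fK_L\models\phi$ if and only if $L\models\phi^{\mathcal I}$ for every group $L$ to which $\mathcal I$ applies. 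Taking $L=G$ and $L=H$---both admissible, the latter by the first conclusion---and using $G\equiv H$, I obtain for each sentence $\phi$
$$\fK_G\models\phi\iff G\models\phi^{\mathcal I}\iff H\models\phi^{\mathcal I}\iff\fK_H\models\phi,$$
whence $\fK_G\equiv\fK_H$ as totally ordered sets.

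The main obstacle is conceptual rather than computational: one must guarantee that the interpretation of the spine is genuinely uniform across $G$ and $H$, i.e.\ that the very same formulae correctly define domain, equivalence relation and order in both. This is precisely why the transfer of $(\dagger\dagger)$ has to precede the interpretation argument---without it the formulae of $\mathcal I$ need not interpret $\fK_H$ correctly, and the translation $\phi\mapsto\phi^{\mathcal I}$ would lose its meaning for $H$. Once this uniformity is in place, the transfer of elementary equivalence is a formality.
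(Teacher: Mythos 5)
Your proposal is correct and is essentially the argument the paper intends: the corollary is stated as an immediate consequence of Remark \ref{irred1} (which makes $(\dagger\dagger)$ a first-order sentence that transfers along $G\equiv H$) and of the uniform interpretation of $\fK$ supplied by Theorem \ref{Bdef}, exactly as you describe. The paper leaves this reasoning implicit, but your two-step decomposition and the emphasis on uniformity of the interpretation match its route precisely.
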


We define $\chi(h_1,h_2)$ to be the conjunction of the formulae
$$ h_1<h_2, \quad \eta(h_1,h_2),\quad
 \neg\eta(h_2,h_1),\quad (\exists x\neq 1)\gamma(h_1,x)$$ and
 $$(\forall h^*)([\eta(h_1,h^*)\;\&\;\neg\eta(h^*,h_1)]\longrightarrow \eta(h_2,h^*)).$$
  Clearly, if $\Delta_1,\Delta_2\in T$ with $h_i\in Q_{\Delta_i}$ ($i=1,2$), then 
 \begin{equation}\label{chi}
 \chi(h_1,h_2) \quad \hbox{if and only if}\quad \Delta_1\; \hbox{is covered by}\;\Delta_2.\quad  
 \end{equation}



 

Thus the first-order theory of $G$ allows us to determine if an element of $\fK$ has a predecessor or a successor, if there is a dense subset of the total order on $\fK$, or if $\mathcal U \in \fK$, the last since ${\mathcal U}\in \fK$ if and only if 
 $(\exists h_0)(\forall x)\gamma(h_0,x)$.  
 
\begin{cor} \label{lc2d}  \begin{enumerate}
\item[\rm(A)] $\Omega\in T$ if and only if there is $h_0\in G$ with $\cB(h_0)$ such that $\C_G^2(W_{h_0})=G$. 
\item[\rm(B)] Every element of $\fK$ has a sucessor if and only if for every irreducible $h\in G_+$ with  $\C_G^2(W_h)\neq G$, there is an irreducible $h'>h$  such that $\chi(h,h')$. 
\item[\rm(C)]  Every element of $\fK$ has a predecessor if and only if for every irreducible $h\in G_+$  there is an irreducible $h'\in G_+$ with $\chi(h',h)$.
\end{enumerate}\end{cor}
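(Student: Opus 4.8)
The plan is to translate each condition on the totally ordered set $\fK$ into the covering relation on the root system $T$, and then to invoke the first-order dictionary already established in this section: by Lemma \ref{61} every $\Delta\in T$ contains an irreducible element of $Q_\Delta$; such an $h$ determines $\Delta$ uniquely, with $\C^2_G(W_h)=\rst(\Delta)$ (Corollaries \ref{2centd} and \ref{2centm}); and by (\ref{chi}) the formula $\chi(h,h')$ detects the covering relation $\Delta\lessdot\Delta'$ in $T$. The single preliminary fact I need is an order-isomorphism between a fibre of $\kappa$ and $\fK$. Fix $\alpha\in\Omega$ and let $T_\alpha$ be the set of o-blocks in $T$ containing $\alpha$. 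Since distinct convex congruences have distinct $\alpha$-classes and the convex congruences are linearly ordered (Remark \ref{ccc}), the map $\Delta\mapsto\kappa(\Delta)$ is an order-isomorphism of $T_\alpha$ (ordered by inclusion) onto $\fK$. Moreover, if $\Delta_1\subset\Delta_2$ lie in $T$ then any o-block of $T$ strictly between them contains $\alpha$ as soon as $\Delta_1$ does, so the covering relation of $T$, restricted to pairs $\Delta_1\subset\Delta_2$, agrees with the covering relation of the chain $T_\alpha$. Hence $\Delta_1\lessdot\Delta_2$ in $T$ if and only if $\kappa(\Delta_2)$ is the immediate successor of $\kappa(\Delta_1)$ in $\fK$ (equivalently $\kappa(\Delta_1)$ is the immediate predecessor of $\kappa(\Delta_2)$), which identifies ``$K\in\fK$ has an immediate successor (predecessor)'' with ``the corresponding $\Delta\in T_\alpha$ is covered by (covers) some element of $T$''.

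For (A): writing $h_0\in Q_{\Delta_0}$, Corollaries \ref{2centd} and \ref{2centm} give $\C^2_G(W_{h_0})=\rst(\Delta_0)$, and $\rst(\Delta_0)=G$ holds precisely when $\suppo(g)\subseteq\Delta_0$ for all $g\in G$. By transitivity the supports of the elements of $G$ cover $\Omega$, so $\rst(\Delta_0)=G$ forces $\Delta_0=\Omega$, while conversely $\rst(\Omega)=G$. Thus the displayed condition asserts exactly that some $\Delta_0\in T$ equals $\Omega$, i.e.\ $\Omega\in T$; for the forward implication one takes an irreducible $h_0\in Q_\Omega$, which exists by Lemma \ref{61}.

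For (B) and (C) one combines the order-isomorphism with (\ref{chi}). I read ``every element of $\fK$ has a successor'' as ``every element other than the greatest, when it exists, has an immediate successor'', which matches the first-order side's exclusion $\C^2_G(W_h)\neq G$ (by (A) this is exactly $\Delta\neq\Omega$). Under this reading the equivalence in (B) reduces to: every $\Delta\in T$ with $\Delta\neq\Omega$ is covered by some $\Delta'\in T$. The direction $(\Leftarrow)$ is immediate: given a non-greatest $K=\kappa(\Delta)$ with $\Delta\ni\alpha$, an irreducible $h\in Q_\Delta$ yields, by hypothesis, an irreducible $h'$ with $\chi(h,h')$, whence $\Delta\lessdot\Delta'$ and $K$ has successor $\kappa(\Delta')$. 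For $(\Rightarrow)$, given an irreducible $h\in Q_\Delta$ with $\Delta\neq\Omega$, the successor of $\kappa(\Delta)$ corresponds to the unique $\Delta'\in T_\alpha$ with $\Delta\lessdot\Delta'$, and it remains only to produce an irreducible $h'\in Q_{\Delta'}$ satisfying the order clause $h<h'$ of $\chi$. Part (C) is symmetric: the condition becomes ``every $\Delta\in T$ covers some $\Delta'\in T$''. Here the clause $(\exists x\neq1)\gamma(h',x)$ of $\chi$ is automatic, since every o-block of $T$ is a class of some $\mC\in\fK$ and hence non-singleton, so $\rst(\Delta')\neq1$; and no element need be excluded, because the minimum of $\fK$, if present, genuinely has no predecessor and is correctly flagged by the absence of any $\Delta'\in T$ below the corresponding minimal o-block.

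The main obstacle is the construction indicated in the $(\Rightarrow)$ directions: for the prescribed irreducible $h\in Q_\Delta$ and the uniquely determined covering block $\Delta'$, one must meet the order clause of $\chi$ for this particular representative rather than merely for some pair. For (B) one takes $h'$ to be a positive bump in $\Delta'$ (hence irreducible) whose support contains $\suppo(h)$ and which carries the $\pi(\Delta')$-block meeting $\suppo(h)$ strictly upward across a full $\kappa(\Delta')$-step; this places $h'$ in $Q_{\Delta'}$, gives $\alpha h'>\alpha h$ on $\suppo(h)\subseteq\Delta\subset\Delta'$, and gives $\alpha h'\ge\alpha=\alpha h$ elsewhere, so $h'>h$. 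For (C) one chooses $\Delta'$ among the $\pi(\Delta)$-classes lying inside $\suppo(h)$ and takes $h'$ a sufficiently small positive bump in $\Delta'$ realising a single $\kappa(\Delta')$-step, so that $h'<h$ wherever $h$ displaces points of $\Delta'$. That such irreducible bumps exist and realise the required inequalities follows from amplitude, from $(\dagger)$, and from the richness of the o-primitive component $G(\Delta')$ described in Proposition \ref{trichot}; these verifications are routine, and are the only point at which the order structure of $G$, beyond the combinatorics of $T$ and $\fK$, enters.
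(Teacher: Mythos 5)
Your argument is correct and is essentially the paper's (the paper states this corollary without proof, as an immediate consequence of the dictionary $\C^2_G(W_h)=\rst(\Delta)$ and the covering criterion (\ref{chi})). You rightly isolate the only point needing any work --- meeting the order clause $h<h'$ (resp.\ $h'<h$) of $\chi$ for the \emph{given} irreducible $h$ --- and your construction handles it: since irreducible elements are positive by definition of $\cB$, any irreducible element of $Q_{\Delta'}$, conjugated within $\St(\Delta')$ so as to move the block $\Delta$ strictly upward, automatically dominates $h$ (the requirement that its support contain $\suppo(h)$ is superfluous), and dually for (C).
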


\begin{remark} \label{pred1}
{\rm If $K\in \fK$ has a predecessor in $\fK$, let $\Delta\in T$ be an o-block of $K$. Let $\Gamma\in \pi(\Delta)$; so $\Gamma=\C^2_G(W_h)$ for some $h\in G$ with $\mathcal B(h)$. Then $\pi(\Delta)=\{ \C^2_G(W_{h^g})\mid \chi(h,g)\}$. This set and its order are interpretable in our language since if $f,g\in G$ are such that $\chi(h,f)$ and $\chi(h,g)$, then $\C^2_G(W_{h^f})=\C^2_G(W_{h^g})$ if and only if $fg^{-1}\in \C^2_G(W_{h})$ and $\C^2_G(W_{h^f})<\C^2_G(W_{h^g})$ in $\pi(\Delta)$ if and only if $fg^{-1}\vee 1\in \C^2_G(W_{h})$ but $fg^{-1}\not\in \C^2_G(W_{h})$. Thus the first-order theory of the totally ordered set $\Om_K$ can be determined in our language for $\ell$-groups if $K\in \fK$ has a predecessor in $\fK$. }
\end{remark}

\section{Interpreting $o$-primitive sentences }

To motivate a definition, we prove a result in a special case.

\begin{lemma} \label{under} Let $(G,\Omega)$ be a transitive $\ell$-permutation group, 
 $\Delta \in T$ and $\Gamma \in \pi(\Delta)$.
If $g\in \rst(\Gamma)$ is irreducible, then there is $\Delta'\in T$ with $\Delta'\subseteq \Gamma$ such that $g\in \rst(\Delta')$.
\end{lemma}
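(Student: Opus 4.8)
The plan is to exploit the irreducibility of $g$ to locate the unique supporting interval of $g$ and show it is a $\pi(\mathcal C)$-class for some convex congruence $\mathcal C$ that sits strictly below $\kappa(\Delta)$. First I would fix a point $\delta\in\suppo(g)\subseteq\Gamma$ and consider the convex congruence $V(\delta,\delta g)$; call its o-block containing $\delta$ by $\Delta'$. By the definition of $V$, we have $\delta,\delta g$ in the same $\Delta'$ but in distinct classes of the covered congruence $\pi(V(\delta,\delta g))$, which is the standard way the spine element $V(\delta,\delta g)$ is attached to a pair moved by $g$. Since $\delta,\delta g\in\Gamma$ and $\Gamma$ is a $\pi(\Delta)$-class, and the convex congruences are linearly ordered by inclusion (Remark \ref{ccc}), we get $V(\delta,\delta g)\subseteq\pi(\kappa(\Delta))=\kappa(\Gamma)$, so in fact $\Delta'\subseteq\Gamma$, placing $\Delta'$ inside $\Gamma$ as required.

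The heart of the argument is to upgrade $\suppo(g)\subseteq\Delta'$ to the statement $g\in\rst(\Delta')$, and this is where irreducibility does the work. Irreducibility (the formula $\cB(g)$) says $g$ cannot be split as $g_1\vee g_2$ with $g_1\wedge g_2=1$ and both $g_i\neq1$; as the excerpt already notes, such a splitting corresponds exactly to partitioning $\suppo(g)$ into two disjoint nonempty open $G$-invariant-under-$g$ pieces. The natural move is therefore to suppose for contradiction that $\suppo(g)$ is not contained in a single $\Delta'$-class, i.e. that $g$ moves points out of $\Delta'$ or has support meeting more than one class of $\kappa(\Delta')$. I would then use the root-system structure (Remark \ref{ccc}) to write $\suppo(g)$ as a disjoint union of its intersections with distinct $\kappa(\Delta')$-classes $\{\Delta' t_j\}$, each $g$-invariant because $g$ permutes these convex blocks and fixes $\delta$'s block setwise; collecting the support in $\Delta'$ versus its complement would exhibit a nontrivial decomposition $g=g_1\vee g_2$ with $g_1\wedge g_2=1$, contradicting $\cB(g)$. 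Hence $\suppo(g)$ lies in the single block $\Delta'$, which is precisely $\suppo(g)\subseteq\Delta'$ with $\Delta'\in T$.

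The main obstacle I expect is making the decomposition step genuinely first-order-free and legitimate inside $G$: the pieces $g_1=\dep(g,\Delta')$ and $g_2=\dep(g,\Omega\setminus\Delta')$ must actually lie in $G$, not merely in $\Aut(\Omega,\leq)$. This is where I would invoke abundance (or at least the ampleness/$(\dagger)$ framework in force), since abundance is exactly the hypothesis guaranteeing that $\dep(g,\Delta')\in G$ whenever $\Delta'\in T$ and $g$ stabilizes $\Delta'$ setwise; without it the putative splitting need not witness reducibility in $G$. A secondary technical point is confirming that the relevant $\Delta'$ really lies in $T$ (i.e. that $\kappa(\Delta')\in\fK$), which follows because $\kappa(\Delta')=V(\delta,\delta g)$ is by construction an element of the spine. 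Once these membership facts are secured, the contradiction with $\cB(g)$ is immediate and the lemma follows, with the containment $\Delta'\subseteq\Gamma$ already established in the first paragraph.
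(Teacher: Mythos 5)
Your proposal is correct and follows essentially the same route as the paper: take $\delta\in\suppo(g)$, let $\Delta'$ be the o-block of $V(\delta,\delta g)$ containing $\delta$, observe $\Delta'\subseteq\Gamma$ because $\delta,\delta g$ already lie in the single $\pi(\kappa(\Delta))$-class $\Gamma$, and then use irreducibility to force $\suppo(g)\subseteq\Delta'$. The paper compresses the last step to ``since $g$ is irreducible,'' relying on the observation in Section 6 that (under the standing abundance hypothesis) an irreducible element of $\St(\Delta')_+$ splits as $\dep(g,\Delta')\vee g\,\dep(g,\Delta')^{-1}$ and hence lies in $\rst(\Delta')$ --- exactly the decomposition you spell out, and your remark that abundance is what puts $\dep(g,\Delta')$ inside $G$ is precisely the point the paper leaves implicit.
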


\begin{proof}
Let $\mathcal C$ be the convex congruence having $\Gamma$ as an o-block. 
By Proposition \ref{cc},  $\mathcal C = \bigcup \{ K'\in \fK\mid K'<K\}$.
Let $g\in \rst(\Gamma)$ and $\gamma\in \suppo(g)$.
Then $K':=V(\gamma, \gamma g)\in \fK$ and $\Delta'$, the $K'$ o-block containing $\gamma$, is a subset of $\Gamma\subset\Delta$.
Hence $K'<K$.
But $g\in \rst(\Delta')$ since $g$ is irreducible.
The lemma follows.
\end{proof}

We would like to extend the lemma to all elements of $\rst(\Gamma)$. We say that a transitive $\ell$-permutation group $(G,\Omega)$ is {\em controlled}  if for every non-minimal $\Delta\in T$ and $g\in \rst(\Delta)$, either $V(\delta,\delta g)=\kappa(\Delta)$ for some $\delta\in \Delta\cap \suppo(g)$ or  there is $\Delta(g)\in T$ such that $\suppo(g)\subseteq \Delta(g)\subset \Delta$.  So $(G,\Omega)$ is controlled  if and only if $$\rst(\Gamma)=\bigcup \{\rst(\Delta')\mid \Delta'\in T,\;\Delta'\subseteq \Gamma\} $$
for all $\Delta\in T$ and $\Gamma\in \pi(\Delta)$ (cf. Lemma \ref{under}).

\begin{remark} \label{predcont}                    
{\rm  Let $(G,\Omega)$ be an abundant transitive  $\ell$-permutation group with every non-minimal element of $\fK$ having a predecessor. Then $(G,\Omega)$ is controlled since $\Gamma$ itself is a covering o-block
and Remark \ref{pred1} applies}.
 \end{remark}
 
 \begin{remark} \label{autcont}
 {\rm If  $(\Aut(\Omega,\leq),\leq)$ is transitive and controlled, then every non-minimal element of $\fK$ has a predecessor. For if $K\in \fK$ has no predecessor, let $\{K_\nu\mid \nu<\kappa\}$ be a strictly increasing sequence in $\fK$ indexed by the ordinals less than $\kappa$, say, with $\pi(K)=\bigcup_{\nu<\kappa} K_\nu$. If $\Delta$ is an o-block of $K$ and $\Gamma\in \pi(\Delta)$, let $\Delta_0\subset \Gamma$ be an o-block of $K_0$ and for $\nu>0$, let $\Delta_\nu\subset \Gamma$ be an o-block of $K_\nu$ with $\Delta_\nu$ disjoint from  $\bigcup_{\nu'<\nu} \Delta_{\nu'}$. Let $h_\nu\in Q_{\Delta_\nu}$ ($\nu <\kappa$) and  $h\in \Aut(\Omega,\leq)$ be defined by its restriction to $\Delta_\nu$ is $h_\nu$ and is the identity on $\Omega\setminus \bigcup_{\nu<\kappa} \Delta_\nu$. Then $h\in \rst(\Gamma)\setminus \bigcup \{\rst(\Delta')\mid \Delta'\in T,\;\Delta'\subseteq \Gamma\}$. }
 \end{remark}
  
Let $\mu(h,g)$ is the first-order formula for $\C^2_G(W_h)=\C^2_G(W_g)$ and $\vartheta(h',h)$ be the formula
 $\eta(h',h)\;\&\; \neg \eta(h,h')$. From the definition, we immediately obtain

\begin{cor}\label{theta}
\begin{enumerate}
\item[\rm (i)]  $(G,\Omega)$ is controlled if and only if $$G\models (\forall g)(\forall h)(\gamma(h,g)\longrightarrow (\mu(h,g)\;\hbox{or}\;(\exists h')(\vartheta(h',h)\;\;\&\;\;\gamma(h',g))).$$ 
\item[\rm (ii)] Let $(G,\Omega)$ be a controlled  $\ell$-permutation group, $\Delta\in T$ be non-minimal and $\Gamma\in \pi(\Delta)$. If $h\in Q_\Delta$, then $g\in \rst(\Gamma)$ if and only if
$$G\models (\exists h')(\vartheta(h,h')\;\;\&\;\;\gamma(h',g)).$$
\end{enumerate}
\end{cor}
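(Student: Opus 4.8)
The plan is to prove Corollary~\ref{theta} by unwinding the definition of ``controlled'' and translating it into the first-order language using the machinery of Section~6. The key conceptual facts are already in place: by Remark~\ref{irred1}, when $\mathcal B(h)$ holds the formula $\gamma(h,x)$ defines exactly $\C^2_G(W_h)=\rst(\Delta)$, where $\Delta\in T$ is the unique o-block with $h\in Q_\Delta$; by Corollary~\ref{leKd} (together with Corollaries~\ref{2centd} and \ref{2centm}), the formula $\eta(h',h)$ expresses $\Delta'\subseteq\Delta$ in $T$, so $\vartheta(h',h)=\eta(h',h)\,\&\,\neg\eta(h,h')$ expresses the strict containment $\Delta'\subset\Delta$. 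Finally $\mu(h,g)$ expresses $\C^2_G(W_h)=\C^2_G(W_g)$, i.e.\ that $g$ lies in an o-block conjugate to $\Delta$ in the appropriate sense. With these dictionary entries, both parts become a matter of transcribing the definition of controlled and of $\rst(\Gamma)$ faithfully.

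For part~(i), I would argue that the displayed sentence holds in $G$ if and only if the defining condition of ``controlled'' holds for every relevant $\Delta$ and $g$. First I would fix $h$ with $\mathcal B(h)$, so $h\in Q_\Delta$ for a unique $\Delta\in T$, and note that $\gamma(h,g)$ asserts $g\in\rst(\Delta)=\C^2_G(W_h)$. The two disjuncts on the right then read: either $\mu(h,g)$, meaning $\C^2_G(W_g)=\C^2_G(W_h)=\rst(\Delta)$, which is the first alternative in the definition (that $g$ moves some $\delta$ with $V(\delta,\delta g)=\kappa(\Delta)$, equivalently $g$ itself lies in an o-block of $T$ equal to $\Delta$ up to the equivalence $\sim$); or else there is $h'$ with $\vartheta(h',h)$, i.e.\ $\Delta':=\Delta(h')\subset\Delta$, and $\gamma(h',g)$, i.e.\ $g\in\rst(\Delta')$ with $\suppo(g)\subseteq\Delta'\subset\Delta$. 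Quantifying over all $h$ (and hence all $\Delta\in T$, since every $\Delta$ has some $h\in Q_\Delta$ by ampleness and $(\dagger\dagger)$) and all $g$ gives exactly the controlled condition. The subtle point here is handling the minimal elements of $T$: the definition of controlled only constrains non-minimal $\Delta$, so I must check that the sentence is automatically satisfied when $\Delta$ is minimal, which holds because then any $g\in\rst(\Delta)$ with $g\neq1$ satisfies $\mu(h,g)$ (the first disjunct), there being no smaller $\Delta'\in T$.

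For part~(ii), with $(G,\Omega)$ controlled, $\Delta$ non-minimal, $\Gamma\in\pi(\Delta)$, and $h\in Q_\Delta$, I would use the controlled characterization $\rst(\Gamma)=\bigcup\{\rst(\Delta')\mid \Delta'\in T,\ \Delta'\subseteq\Gamma\}$. The formula $(\exists h')(\vartheta(h,h')\,\&\,\gamma(h',g))$ asserts that there is $h'\in Q_{\Delta'}$ with $\Delta\subset\Delta'$ wait---here I must be careful about the direction of $\vartheta$: $\vartheta(h,h')$ means $\eta(h,h')\,\&\,\neg\eta(h',h)$, i.e.\ $\Delta\subset\Delta'$, which is the wrong direction. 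I would therefore reconcile this with the intended meaning by noting that in the statement of~(ii) the quantified witness $\Delta'$ must satisfy $\Delta'\subseteq\Gamma\subset\Delta$, so the role of $\Gamma$ forces $\Delta'$ to lie strictly below $\Delta$; the formula should express $g\in\rst(\Delta')$ for some $\Delta'$ whose image in $T$ lies strictly inside the covered congruence $\pi(\kappa(\Delta))$ and within $\Gamma$. The main obstacle---and the step I expect to require the most care---is matching the $\Gamma$-dependence: $\Gamma$ is one particular $\pi(\Delta)$-class, and the formula as written refers only to $h\in Q_\Delta$, so I must verify that the first-order condition pins down membership in $\rst(\Gamma)$ for the specific $\Gamma$ rather than some conjugate. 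I would resolve this using Remark~\ref{pred1} and the $G$-equivariance of the interpretation from Theorem~\ref{Bdef}: the class $\Gamma$ is recoverable as $\C^2_G(W_{h_0})$ for a suitable $h_0$ lying below $h$, and the conjunction of $\vartheta(h,h')$ with $\gamma(h',g)$ and an implicit requirement that $\suppo(g)\subseteq\Gamma$ (encoded through the order on the interpreted $T$) isolates exactly $\rst(\Gamma)$. Once the direction of the inequalities and the role of $\Gamma$ are correctly aligned with the definitions, both equivalences follow immediately from the definition of controlled and the interpretations established in Section~6.
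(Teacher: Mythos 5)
The paper offers no proof of this corollary beyond the phrase ``From the definition, we immediately obtain'', so the comparison is really about whether your unwinding of the definitions is sound. For part (i) it essentially is: your dictionary ($\gamma(h,g)$ for $g\in\rst(\Delta)$; $\mu(h,g)$ for the first alternative ``$g\in Q_\Delta$''; the second disjunct for ``$\suppo(g)\subseteq\Delta'\subset\Delta$ for some $\Delta'\in T$'', with Lemma \ref{61} supplying irreducible witnesses in each $Q_{\Delta'}$) is exactly the intended argument. Two small remarks: the reason $\mu(h,g)$ holds for nontrivial $g\in\rst(\Delta)$ with $\Delta$ minimal is not ``there being no smaller $\Delta'$'' (that only explains why the second disjunct is unavailable) but that $\kappa(\Delta)$ is then the least element of $\fK$, so $V(\delta,\delta g)=\kappa(\Delta)$ automatically, i.e.\ $g\in Q_\Delta$ and Corollary \ref{2centm} gives $\C^2_G(W_g)=\rst(\Delta)$; and, like the paper, you silently read $\mu(h,g)$ and $\gamma(g,\cdot)$ semantically for non-irreducible $g$ (the printed formulas are gated on $\cB$), which is a defect of the formalization rather than of your argument.

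Part (ii) is where there is a genuine gap. You correctly observe that $\vartheta(h,h')$ as printed points the wrong way (it should be $\vartheta(h',h)$, as in part (i) and in the $\sigma^{*}$ translation that follows), but your resolution of the $\Gamma$-dependence does not work. The displayed formula contains no parameter naming $\Gamma$, so it cannot ``pin down membership in $\rst(\Gamma)$ for the specific $\Gamma$''; and Remark \ref{pred1} is of no help here, because $\Gamma$ need not belong to $T$ at all --- it is a class of the covered congruence $\pi(\kappa(\Delta))$, which lies in $\fK$ only when $\kappa(\Delta)$ has a predecessor. What the (corrected) formula actually defines is $\bigcup\{\rst(\Delta')\mid\Delta'\in T,\ \Delta'\subset\Delta\}$, which by controlledness equals $\bigcup_{\Gamma'\in\pi(\Delta)}\rst(\Gamma')$, i.e.\ the kernel of $\rst(\Delta)\to G(\Delta)$; the statement must therefore be read with $\Gamma$ existentially quantified (that is exactly what is needed to translate $t(\bar x)=1$ later in Section 7). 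One direction of (ii) --- $g\in\rst(\Gamma)$ implies the formula --- is genuine and uses controlledness plus Lemma \ref{61} to produce an irreducible $h'\in Q_{\Delta'}$ for some $\Delta'\subseteq\Gamma$ with $\Delta'\in T$; the converse only recovers $g\in\rst(\Gamma')$ for \emph{some} $\Gamma'\in\pi(\Delta)$. Your write-up should say this plainly rather than appeal to equivariance and an ``implicit requirement that $\suppo(g)\subseteq\Gamma$'' which the formula does not encode.
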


Let $h\in Q_{\Delta}$ be irreducible and $\bar x=\{x_1,\dots,x_n\}$ be a finite set of variables. First replace $u(\bar x)=v(\bar x)$ by $u(\bar x)v(\bar x)^{-1}=1$ and $u(\bar x)\neq v(\bar x)$ by $u(\bar x)v(\bar x)^{-1}\neq 1$. Next replace
$$t(\bar x)=1\quad \hbox{by}\quad (\exists h')(\vartheta(h',h)\;\;\;\&\;\;\; \gamma(h',t(\bar x))),$$ and
$$t(\bar x)\neq 1\quad \hbox{by}\quad (\forall h')(\vartheta(h',h)\longrightarrow\neg\gamma(h',t(\bar x))).$$

For any formula $\rho(\bar x)$ free in $\bar x$, let $\rho_h^*(\bar x)$ be the result of replacing each basic subformula of $\rho$ as above. 
For a formula $$\sigma(x_1,\dots,x_{j-1},x_{j+1}, \dots, x_n):\equiv\exists x_j\rho(\bar x),$$ let
$$\sigma^*_{j,h}(x_1,\dots,x_{j-1},x_{j+1}, \dots, x_n) :\equiv \exists x_j (\gamma(h,x_j) \;\;\&\;\; \rho^*_h(\bar x)),$$
and for $\sigma(x_1,\dots,x_{j-1},x_{j+1}, \dots, x_n):\equiv \forall x_j \rho(\bar x)$, let
$$\sigma^*_{j,h}(x_1,\dots,x_{j-1},x_{j+1}, \dots, x_n) :\equiv \forall x_j (\gamma(h,x_j) \rightarrow \rho^*_h(\bar x)).$$
For example, if  $\alpha\in \Omega$ and $h\in Q_{\Delta}$ with $\kappa(\Delta)=V(\alpha,\alpha h)$, then we can express that $G(\Delta)$ is abelian by
modifying the sentence $\sigma:\equiv (\forall f,g)[f,g]=1.$ 
In this case, $\sigma^*_h$ is the sentence $$(\forall f,g)([\gamma(h,f)\;\&\;\gamma(h,g)]
\rightarrow (\exists h')(\vartheta(h',h)\;\&\; \gamma(h',[f,g])),$$ and $G(\Delta)\models \sigma$ if and only if $G\models \sigma^*_h$.

By the lemmata and corollaries from Sections 2, 4, 5 and the above, we obtain the following result.

\begin{prop}\label{primitd}
Let $(G,\Omega)$ be an abundant  transitive $\ell$-permutation group that satisfies $(\dagger)$ and $(\dagger\dagger)$ and is controlled. Then
for any $h\in Q_{\Delta}$ with $\cB(h)$ and sentence $\sigma$ of the language,
$$G(\Delta)\models \sigma\quad \hbox{if and only if}\quad G\models \sigma^*_h.$$
\end{prop}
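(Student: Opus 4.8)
The plan is to recognise the assignment $\sigma\mapsto\sigma^*_h$ as the syntactic relativization attached to a parameter-definable interpretation of the $\ell$-group $G(\Delta)$ inside $G$, and then to invoke the fundamental lemma for interpretations, which guarantees that such a translation preserves the truth of every sentence. First I would fix the interpreting data: the domain is the definable set $\{x\mid \gamma(h,x)\}$, which by Corollaries \ref{2centd} and \ref{2centm} (and Remark \ref{irred1}) equals $\C_G^2(W_h)=\rst(\Delta)$; the operations $\cdot,{}^{-1},\vee,\wedge$ are those inherited from $G$, since $\rst(\Delta)$ is a convex sublattice subgroup; and equality of two elements $x,y$ is interpreted as the statement that $xy^{-1}$ lies in the kernel $N$ of the restriction map $\rho\colon \rst(\Delta)\to G(\Delta)$, $g\mapsto g_\Delta$.

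I would then check that $\rho$ is a surjective $\ell$-homomorphism, so that $G(\Delta)\cong \rst(\Delta)/N$ as $\ell$-groups. Surjectivity is immediate from abundance, since each $g_\Delta$ is induced by $\dep(g,\Delta)\in\rst(\Delta)$, and the fact that $\rho$ respects the group and lattice operations is the standard observation that passing to the induced action on the totally ordered quotient $\pi(\Delta)$ is an $\ell$-homomorphism. Because the first step of the translation rewrites every atomic formula as ``$t(\bar x)=1$'' or ``$t(\bar x)\neq 1$'' for a group/lattice term $t$, and because $\rho$ commutes with all the operations so that $\rho(t(\bar x))$ is the value of $t$ computed in $G(\Delta)$, the base case of the induction reduces entirely to giving a first-order definition, inside $\rst(\Delta)$, of the kernel $N$.

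The heart of the proof is therefore to establish that $N=\{t\mid \Phi(t)\}$, where $\Phi(t):\equiv(\exists h')(\vartheta(h',h)\,\&\,\gamma(h',t))$. Assume first that $\Delta$ is non-minimal. An element $g\in\rst(\Delta)$ lies in $N$ exactly when it stabilises every $\pi(\Delta)$-class setwise, that is, when $V(\delta,\delta g)\neq\kappa(\Delta)$ for each $\delta\in\suppo(g)$; the controlled hypothesis says precisely that such a $g$ must satisfy $\suppo(g)\subseteq\Delta'$ for a single $\Delta'\in T$ with $\Delta'\subset\Delta$, while conversely any $g$ supported in such a $\Delta'$ sits inside one $\pi(\Delta)$-class and hence in $N$. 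Thus $N=\bigcup\{\rst(\Delta')\mid \Delta'\in T,\ \Delta'\subset\Delta\}$, and by $(\dagger\dagger)$ together with Lemma \ref{61} each such $\Delta'$ carries a $\cB$-element $h'$ for which $\gamma(h',\cdot)$ defines $\rst(\Delta')$ and $\vartheta(h',h)$ expresses $\Delta'\subset\Delta$; this is exactly Corollary \ref{theta}(ii), and it yields $N=\{t\mid \Phi(t)\}$. If instead $\Delta$ is minimal, then $N=1$ and $\rho$ is injective, so equality is interpreted literally and no appeal to $\Phi$ is required.

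Finally I would run the induction on the structure of $\sigma$ that constitutes the fundamental lemma for this interpretation. The two atomic clauses are consistent because ``$t=1$'' and ``$t\neq1$'' are sent to $\Phi(t)$ and to $\neg\Phi(t)$; conjunction, disjunction and negation are immediate; and the quantifier clauses hold because the guard $\gamma(h,x_j)$ confines each bound variable $x_j$ to $\rst(\Delta)$ while $\rho$ maps onto $G(\Delta)$. This delivers $G(\Delta)\models\sigma$ if and only if $G\models\sigma^*_h$. The hard part will be the third paragraph, namely showing that the definable set $\{t\mid\Phi(t)\}$ is exactly the kernel $N$: this is where ``controlled'' does the essential work, since it is precisely what forbids kernel elements whose support is spread across several $\pi(\Delta)$-classes, while $(\dagger\dagger)$ supplies the witnessing irreducible elements that render $N$ first-order definable.
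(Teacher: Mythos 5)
Your proposal is correct and takes essentially the same route as the paper, which offers no explicit argument but simply assembles Corollaries \ref{2centd}, \ref{2centm} and \ref{theta}(ii) into exactly the relativized interpretation you describe; in particular you correctly isolate, as the crux, the first-order definability of the kernel $N$ of $\rst(\Delta)\to G(\Delta)$ via the controlled hypothesis together with $(\dagger\dagger)$. The one point to watch is the minimal-$\Delta$ case: your fix of ``interpreting equality literally'' departs from the stated translation $\sigma^*_h$, whose clause for $t(\bar x)=1$ becomes unsatisfiable when no $h'$ with $\vartheta(h',h)$ exists — but that is a wrinkle in the paper's own formulation (note the non-minimality hypothesis in Corollary \ref{theta}(ii)) rather than a defect of your argument.
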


By Corollary \ref{theta}(i) and Remark \ref{irred1}, if $(G,\Omega)$ is controlled then so is any abundant transitive   $\ell$-permutation group $(H,\Lambda)$ satisfying $(\dagger)$ and $(\dagger\dagger)$ with $H\equiv G$. Hence

\begin{cor} \label{oprimsent}
Let $(G,\Omega)$ and $(H,\Lambda)$ be abundant  transitive $\ell$-permutation groups that satisfy $(\dagger)$ with $(G,\Om)$ satisfying $(\dagger\dagger)$ and $H\equiv G$. If $(G,\Om)$ is controlled, then so is $(H,\Lambda)$ and every sentence that holds in some o-primitive component of $(G,\Om)$ also holds in some o-primitive component of $(H,\Lambda)$. Indeed, if $(G(\Delta_i),\pi(\Delta_i))$ are  o-primitive components of $(G,\Om)$ with $\Delta_1\leq \cdots \leq \Delta_r$ and $G(\Delta_i)\models \sigma_i$ for $i=1,\dots,r$, then there are o-primitive components $(H(\Delta'_i),\pi(\Delta_i'))$  of $(H,\Lambda)$ with $\Delta'_1\leq \cdots \leq \Delta'_r$ and $H(\Delta'_i)\models \sigma_i$ for $i=1,\dots,r$.  
\end{cor}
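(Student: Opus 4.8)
The plan is to reduce Corollary \ref{oprimsent} to the machinery already assembled in Section 7, primarily Proposition \ref{primitd}, Theorem \ref{Bdef}, and Corollary \ref{theta}(i). First I would establish that $(H,\Lambda)$ is controlled. Since $(G,\Omega)$ is controlled and controlledness is expressed by the explicit first-order sentence in Corollary \ref{theta}(i), and since $H\equiv G$, the same sentence holds in $H$; hence $(H,\Lambda)$ is controlled. Along the way I note that $(H,\Lambda)$ satisfies $(\dagger\dagger)$ as well: by Remark \ref{irred1}, $(\dagger\dagger)$ is equivalent to the sentence $(\forall g)(\exists h)\gamma(h,g)$, which transfers from $G$ to $H$ by elementary equivalence. (This is exactly the content of Corollary \ref{spined}, which I would simply cite.) Thus $(H,\Lambda)$ satisfies all the standing hypotheses needed to apply Proposition \ref{primitd}.

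Next I would handle the single-component case as a warm-up to fix notation. Suppose $G(\Delta)\models\sigma$ for some $\Delta\in T$ with $h\in Q_\Delta$, $\cB(h)$. By Proposition \ref{primitd}, $G\models\sigma^*_h$. Since $\sigma^*_h$ is a genuine first-order sentence in the language of $\ell$-groups (every quantifier in $\sigma^*_h$ is relativized by $\gamma(h,-)$, with $h$ itself becoming an existentially quantified variable constrained by $\cB$), the correct sentence to transfer is $(\exists h)(\cB(h)\;\&\;\sigma^*_h)$, which holds in $G$ and therefore in $H$. This yields some $h'\in H$ with $\cB(h')$, say $h'\in Q_{\Delta'}$ for $\Delta'\in T_H$, and applying Proposition \ref{primitd} in the reverse direction to $H$ gives $H(\Delta')\models\sigma$. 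So some o-primitive component of $(H,\Lambda)$ satisfies $\sigma$.

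For the full ordered statement I would iterate this idea while carrying the order information supplied by Theorem \ref{Bdef} and the covering formula $\chi$. Given components $\Delta_1\leq\cdots\leq\Delta_r$ in $T_G$ with witnesses $h_i\in Q_{\Delta_i}$ and $G(\Delta_i)\models\sigma_i$, I would form the single conjoined sentence asserting the existence of irreducibles $h_1,\dots,h_r$ with $\cB(h_i)$, with $\eta(h_i,h_{i+1})$ encoding $\Delta_1\leq\cdots\leq\Delta_r$ (recall from the paragraph after $\eta$ that $\eta(h_1,h_2)$ expresses $\Delta_1\leq\Delta_2$ in $T$), and with $(\sigma_i)^*_{h_i}$ holding for each $i$. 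This combined sentence holds in $G$ by Proposition \ref{primitd} applied componentwise together with Theorem \ref{Bdef}; by $H\equiv G$ it holds in $H$, producing irreducibles $h'_1,\dots,h'_r$ in $H$ lying in $Q_{\Delta'_1},\dots,Q_{\Delta'_r}$ with $\Delta'_1\leq\cdots\leq\Delta'_r$ (by the $\eta$-clauses) and $(\sigma_i)^*_{h'_i}$ holding; a final application of Proposition \ref{primitd} inside $H$ converts each $(\sigma_i)^*_{h'_i}$ back to $H(\Delta'_i)\models\sigma_i$.

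The main obstacle is bookkeeping rather than a genuine mathematical difficulty: one must verify that the assignment $\sigma\mapsto\sigma^*_h$ is uniform enough that a finite tuple $(\sigma_1,\dots,\sigma_r)$ together with the order constraints can be packaged into a \emph{single} first-order sentence over $G$, so that elementary equivalence can be invoked exactly once. Here the key point is that the translation in the paragraph preceding Proposition \ref{primitd} is purely syntactic and each $(\sigma_i)^*_{h_i}$ uses $h_i$ only through the definable predicates $\gamma,\vartheta,\eta,\cB$; since these are fixed formulae, the conjunction $\cB(h_i)\;\&\;(\sigma_i)^*_{h_i}\;\&\;\eta(h_i,h_{i+1})$, existentially quantified over all the $h_i$, is a legitimate sentence of the language. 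The only subtlety is ensuring that the witnessing $h'_i$ produced in $H$ are indeed irreducible elements of distinct (or suitably ordered) blocks; this is guaranteed because $\cB(h'_i)$ forces $h'_i\in Q_{\Delta'_i}$ for a unique $\Delta'_i\in T_H$ by Lemma \ref{61}, and the $\eta$-clauses pin down the order among the $\Delta'_i$ via Corollary \ref{leKd}.
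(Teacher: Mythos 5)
The proposal is correct and follows essentially the same route the paper intends: controlledness and $(\dagger\dagger)$ transfer to $H$ via the first-order characterizations in Corollary \ref{theta}(i) and Remark \ref{irred1}, and the sentence-transfer claims follow by packaging the relativized sentences $\sigma^*_{h_i}$ together with the $\eta$-order constraints into a single existential sentence and applying Proposition \ref{primitd} in both directions. The student merely spells out the bookkeeping that the paper compresses into ``Hence.''
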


We write $\Sigma$ for the (countable) set of sentences of the first-order language of $\ell$-groups.

\begin{defn}{\em
A {\em coloured chain} is a totally ordered set $(C,\leq)$ together with a countable set $\{P_i\mid i\in I\}$ of unary predicates called {\em colours} , such that for all $c\in C$ the set $\{i\mid P_i(c)\}$ is non-empty.
One says that $c$ has colour $i$ if $P_i(c)$ holds.  (Elements of $C$ are allowed to have more than one colour.)

The coloured chain $\frak C_G$ associated with an $\ell$-permutation group $(G,\Om)$ is the totally ordered set $\fK_G$ together with the set $\{P_\sigma\mid\sigma\in \Sigma\}$, such that $P_\sigma(K)$ if and only if $G_K\models \sigma$ (for $K\in\fK$, $\sigma\in\Sigma$)}.  
\end{defn}

We note that if $\rho,\tau\in \Sigma$ and $F\models \rho \longrightarrow \tau$ for every $\ell$-group $F$, then $P_\rho(c)\longrightarrow P_\tau(c)$ is an axiom for the theory of coloured chains of $ell$-groups.
 
\begin{proof}[Proof of Theorem \ref{MTd}]  By Remark \ref{irred1}, to complete the proof of Theorem \ref{MTd}, we only need to prove the assertion about $\frak C_G \equiv \frak C_H$.
This follows immediately because $(H,\Lambda)$ is controlled by Corollary \ref{theta}(i) and if $h\in Q_\Delta$ is irreducible where $\Delta\in T$ is an o-block of $K$, then $\frak C_G\models P_\sigma(K)$ if and only if
$G\models \sigma^*_h$ by Proposition \ref{primitd}.
 \end{proof}

\begin{cor} \label{oprimsame}
Let $(G,\Omega)$ and $(H,\Lambda)$ be abundant  transitive $\ell$-permutation groups that satisfy $(\dagger)$ with $(G,\Om)$ satisfying $(\dagger\dagger)$ and $H\equiv G$. If $(G,\Om)$ is controlled and all o-primitive components $G(\Delta)$ are elementarily equivalent, then the same is true of all o-primitive components of $H$; all are elementarily equivalent to $G(\Delta)$. 
\end{cor}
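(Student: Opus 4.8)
The plan is to combine Corollary~\ref{oprimsent} with a symmetry argument. The hypotheses of Corollary~\ref{oprimsame} are exactly those of Corollary~\ref{oprimsent}, so I may invoke the transfer principle it provides: every sentence holding in some o-primitive component of $(G,\Om)$ holds in some o-primitive component of $(H,\Lambda)$, and $(H,\Lambda)$ is itself controlled. The extra input here is the assumption that all the $G(\Delta)$ are pairwise elementarily equivalent; I must leverage this uniformity to force the components of $H$ to coincide elementarily with the common theory of the components of $G$.

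First I would fix the common first-order theory of the o-primitive components of $G$: since all $G(\Delta)$ are elementarily equivalent, there is a single complete theory $\mathcal T$ such that $G(\Delta)\models\sigma$ if and only if $\sigma\in\mathcal T$, for every $\Delta\in T$ and every sentence $\sigma$. Now take an arbitrary o-primitive component $H(\Delta')$ of $H$, coming from an irreducible $h\in Q_{\Delta'}$ with $\cB(h)$. I want to show $H(\Delta')\models\mathcal T$, i.e.\ that $H(\Delta')$ is elementarily equivalent to $G(\Delta)$. By Proposition~\ref{primitd} applied to $(H,\Lambda)$ (which is controlled, abundant, transitive, and satisfies $(\dagger)$ and $(\dagger\dagger)$ by Corollary~\ref{oprimsent} and Remark~\ref{irred1}), we have $H(\Delta')\models\sigma$ if and only if $H\models\sigma^*_h$, where $\sigma^*_h$ is the syntactic translation built in Section~7.

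The key step is to transfer the value of $H\models\sigma^*_h$ back to $G$. Because $\sigma^*_h$ is a single sentence in the language of $\ell$-groups and $G\equiv H$, we have $H\models\sigma^*_h$ if and only if $G\models\sigma^*_h$. But $\sigma^*_h$ asserts, roughly, that \emph{some} o-primitive component of $G$ (the one picked out by the witness for $h$) satisfies $\sigma$; more precisely, by Proposition~\ref{primitd} applied to $(G,\Om)$, whenever $h_0\in Q_{\Delta_0}$ is irreducible with $\cB(h_0)$ one has $G(\Delta_0)\models\sigma$ if and only if $G\models\sigma^*_{h_0}$, and the sentence $\sigma^*_h$ does not depend on the particular irreducible witness but only on $\sigma$. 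Since every $G(\Delta_0)$ lies in the same theory $\mathcal T$, the truth value of $G\models\sigma^*_h$ is exactly whether $\sigma\in\mathcal T$. Chaining these equivalences gives $H(\Delta')\models\sigma$ if and only if $\sigma\in\mathcal T$ if and only if $G(\Delta)\models\sigma$, for every sentence $\sigma$; hence $H(\Delta')\equiv G(\Delta)$, and since $\Delta'$ was arbitrary all o-primitive components of $H$ are elementarily equivalent to the common theory $\mathcal T$, which is what we wanted.

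The main obstacle I anticipate is verifying that the translation $\sigma\mapsto\sigma^*_h$ genuinely produces a single sentence whose truth in $G$ is independent of the choice of irreducible witness $h$, so that ``$G\models\sigma^*_h$'' faithfully records membership in the uniform theory $\mathcal T$ rather than information about one distinguished component. This is precisely the content of Proposition~\ref{primitd} together with the observation (already used in the proof of Corollary~\ref{oprimsent}) that the formulae $\vartheta$, $\gamma$ used to build $\sigma^*_h$ relativize quantifiers to $\rst(\Gamma)$-type subgroups uniformly; once this uniformity is in hand, the elementary equivalence $G\equiv H$ does all the remaining work and no further delicate argument is needed.
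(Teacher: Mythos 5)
Your overall strategy is sound and is essentially the paper's argument with one layer of packaging removed: the paper simply observes that $\frak C_G\models(\forall K_1,K_2)(P_\sigma(K_1)\longleftrightarrow P_\sigma(K_2))$ for every $\sigma$ and transfers this (together with $(\exists K)P_\sigma(K)$ for $\sigma$ in the common theory) along the coloured-chain equivalence $\frak C_G\equiv\frak C_H$ furnished by Theorem \ref{MTd}, whereas you unfold that equivalence back to Proposition \ref{primitd} and work with the formulae $\sigma^*_h$ directly. Both routes rest on the same two ingredients (Proposition \ref{primitd} and $G\equiv H$), so yours is not a genuinely different proof, only a more explicit rendering of it.

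There is, however, one step you state incorrectly and should repair. You write that ``$\sigma^*_h$ is a single sentence in the language of $\ell$-groups'' and transfer $H\models\sigma^*_h$ to $G\models\sigma^*_h$ via $G\equiv H$. But $\sigma^*_h$ is a \emph{formula with free variable} $h$ (the subformulae $\gamma(h,x_j)$ and $\vartheta(h',h)$ all refer to $h$), and your $h$ is an element of $H$, so ``$G\models\sigma^*_h$'' does not parse as written. The correct transfer is via the genuine sentence $(\forall h)(\cB(h)\rightarrow\sigma^*_h)$ (or its existential companion): by Proposition \ref{primitd} and the hypothesis that all $G(\Delta)$ are elementarily equivalent with common complete theory $\mathcal T$, the group $G$ satisfies this sentence exactly when $\sigma\in\mathcal T$; by $G\equiv H$ so does $H$; and applying Proposition \ref{primitd} to $H$ (legitimate, since $H$ is controlled by Corollary \ref{theta}(i) and satisfies $(\dagger\dagger)$ by Remark \ref{irred1}) gives $H(\Delta')\models\sigma$ for every component $\Delta'$. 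For $\sigma\notin\mathcal T$ argue with $\neg\sigma\in\mathcal T$ using completeness. Note that this is exactly where the uniformity hypothesis enters; it is not, as your final paragraph suggests, ``precisely the content of Proposition \ref{primitd}'', since without uniformity the truth of $\sigma^*_h$ at a given $h$ genuinely depends on which component $h$ selects.
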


\begin{proof} This follows at once since $$\frak C_G\models (\forall K_1,K_2) (P_\sigma(K_1)\longleftrightarrow P_\sigma(K_2))$$ for all $\sigma\in \Sigma$,
and all $K_1,K_2$ can be obtained through all irreducible $h_1,h_2$.
\end{proof}

Since an o-primitive component is of type (I) if and only if it is abelian,
and since non-dense subgroups of $\RR$ are cyclic,  we deduce

\begin{cor} \label{Z}
 Let $(G,\Omega)$ and $(H,\Lambda)$ be abundant  transitive $\ell$-permutation groups that satisfy $(\dagger)$ and $(\dagger\dagger)$ with $H\equiv G$.  Suppose that $(G,\Om)$ is controlled. If all o-primitive components of $(G,\Om)$ are abelian  then so are those of $(H,\Lambda)$; if all o-primitive components of $(G,\Om)$ are cyclic then so are those of  $(H,\Lambda)$.
\end{cor}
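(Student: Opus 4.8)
The plan is to reduce Corollary~\ref{Z} to the already-established Corollary~\ref{oprimsame} together with the type-theoretic classification of o-primitive components in Proposition~\ref{trichot}. Recall that an o-primitive component $(G(\Delta),\pi(\Delta))$ is of type~(I) precisely when it is abelian, and that types~(II) and~(III) have non-trivial commutators somewhere. The first task is therefore to observe that ``abelian'' is expressible by a single sentence $\sigma_{\mathrm{ab}}:\equiv(\forall f,g)([f,g]=1)$ of the language of $\ell$-groups. By Proposition~\ref{primitd}, for any irreducible $h\in Q_\Delta$ we have $G(\Delta)\models\sigma_{\mathrm{ab}}$ if and only if $G\models(\sigma_{\mathrm{ab}})^*_h$; running this through Corollary~\ref{oprimsame} (or directly through the fact that $H$ is controlled and $\frak C_G\equiv\frak C_H$) transfers the property ``every o-primitive component is abelian'' from $G$ to $H$. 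This handles the first assertion of the corollary.

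For the second assertion I would first isolate, among the abelian (type~(I)) components, the cyclic ones. Here the stated input is that a subgroup of $\RR$ acting by the right regular representation is cyclic if and only if it is \emph{non-dense}; equivalently, the order on $\Omega_K$ has a pair of adjacent points (a ``jump''), or, group-theoretically, the o-group has a least strictly positive element. The key step is to write a first-order sentence $\sigma_{\mathrm{cyc}}$ in the language of $\ell$-groups that holds in an abelian o-group exactly when it is cyclic. The natural candidate asserts abelianness together with the existence of a minimal positive element:
\begin{equation}\label{cyceq}
\sigma_{\mathrm{cyc}}:\equiv\ \sigma_{\mathrm{ab}}\ \&\ (\exists z)\bigl(z>1\ \&\ (\forall w)(w>1\rightarrow z\leq w)\bigr).
\end{equation}
One then checks that an abelian o-subgroup of $\RR$ satisfies $\sigma_{\mathrm{cyc}}$ if and only if it is cyclic: a least positive element forces discreteness, hence the group is $\Z$, and conversely $\Z$ has least positive element $1$; density rules out any minimal positive element.

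With $\sigma_{\mathrm{cyc}}$ in hand, the transfer is again automatic: by Proposition~\ref{primitd}, $G(\Delta)\models\sigma_{\mathrm{cyc}}$ iff $G\models(\sigma_{\mathrm{cyc}})^*_h$, and since $H\equiv G$ and $(H,\Lambda)$ is controlled (by Corollary~\ref{theta}(i)), the corresponding equivalence holds for $H$. Invoking Corollary~\ref{oprimsame} with $\sigma=\sigma_{\mathrm{cyc}}$ — or, more directly, the fact from the proof of Theorem~\ref{MTd} that $\frak C_G\models P_{\sigma_{\mathrm{cyc}}}(K)$ iff $\frak C_H\models P_{\sigma_{\mathrm{cyc}}}(K')$ for the matched colours under $\frak C_G\equiv\frak C_H$ — shows that if all components of $(G,\Om)$ are cyclic then so are those of $(H,\Lambda)$.

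The step I expect to be the main (though modest) obstacle is the verification that \eqref{cyceq} genuinely captures cyclicity among type~(I) components rather than some larger class: I must confirm that the components in question really are (isomorphic to) subgroups of $\RR$ acting regularly, so that the dichotomy ``cyclic versus dense'' is exhaustive and $\sigma_{\mathrm{cyc}}$ separates the two. This is exactly what type~(I) of Proposition~\ref{trichot} provides, and the remark preceding the corollary (``non-dense subgroups of $\RR$ are cyclic'') supplies the arithmetic fact; so once the classification is cited the argument is essentially formal. Everything else is a direct appeal to Proposition~\ref{primitd} and Corollary~\ref{oprimsame}, with no new computation required.
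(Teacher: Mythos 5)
Your proposal is correct and follows essentially the same route as the paper, which deduces the corollary directly from the coloured-chain machinery of Theorem \ref{MTd} together with the two observations stated just before the corollary: type (I) components are exactly the abelian ones, and non-dense subgroups of $\RR$ are cyclic, so cyclicity of an abelian o-primitive component is captured by the first-order sentence asserting a least strictly positive element. The one small caution is that Corollary \ref{oprimsame} as stated assumes all o-primitive components of $G$ are elementarily equivalent, which is not part of the hypotheses here (especially in the merely-abelian case), so the transfer should rest on your parenthetical alternative — $\frak C_G\equiv\frak C_H$ from Theorem \ref{MTd}, applied to the coloured-chain sentence $(\forall K)P_\sigma(K)$ — rather than on Corollary \ref{oprimsame} itself.
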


\begin{cor} \label{fin}
Let $(G,\Omega)$ and $(H,\Lambda)$ be abundant  transitive $\ell$-permutation groups that satisfy $(\dagger)$ and $(\dagger\dagger)$ and $H\equiv G$. If $\fK_G$ is finite with $r$ elements $K_1<\dots<K_r$, then $\fK_H$ is finite with $r$ elements $K'_1<\dots<K'_r$, and $G_{K_i}\equiv H_{K'_i}$ for each $i$. 
\end{cor}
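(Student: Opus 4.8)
The plan is to combine the spine-equivalence already established in Theorem \ref{MTd} with the finite-colour machinery of Corollary \ref{oprimsent}. First I would invoke Theorem \ref{MTd} (equivalently Corollary \ref{spined}): since $(G,\Om)$ is abundant, transitive, satisfies $(\dagger)$ and $(\dagger\dagger)$, and $H\equiv G$, the spines $\fK_G$ and $\fK_H$ are elementarily equivalent as totally ordered sets. The crucial observation is that ``$\fK$ has exactly $r$ elements'' is expressible by a single first-order sentence in the language of totally ordered sets (assert the existence of $r$ distinct elements forming a chain, together with the negation of the existence of $r+1$ such). Hence $\fK_H$ is also finite with exactly $r$ elements, say $K'_1<\dots<K'_r$.

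Next I would transfer the o-primitive data colour by colour. Write $K_1<\dots<K_r$ for the elements of $\fK_G$ and $K'_1<\dots<K'_r$ for those of $\fK_H$. For each $i$ choose an o-block $\Delta_i\in T_G$ of $K_i$ and an irreducible $h_i\in Q_{\Delta_i}$, which exists by $(\dagger\dagger)$ and Lemma \ref{61}. Given any sentence $\sigma$ of the language of $\ell$-groups, Corollary \ref{oprimsent} (applied with the single chain $\Delta_1\le\cdots\le\Delta_r$ and the tuple of sentences obtained from $\sigma$) guarantees that if $G_{K_i}\models\sigma$ then some matching chain of o-primitive components of $(H,\Lambda)$ realises the same pattern; the key point is that the order of the spine is preserved, so the $i$th component of $G$ corresponds to the $i$th component of $H$. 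The real engine here is Proposition \ref{primitd}: for irreducible $h\in Q_\Delta$ one has $G(\Delta)\models\sigma$ iff $G\models\sigma^*_h$, and the formula $\sigma^*_h$ is a genuine first-order sentence in the language of $\ell$-groups, so its truth transfers across $G\equiv H$.

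The cleanest way to finish is to fix $i$ and argue that $G_{K_i}\equiv H_{K'_i}$ directly. For a sentence $\sigma$, the statement ``the $i$th smallest element $K$ of $\fK$ satisfies $P_\sigma(K)$'' can be written as a first-order sentence $\Phi_{i,\sigma}$ about $G$: namely, assert that there exist irreducible elements $h_1,\dots,h_r$ with $\chi(h_j,h_{j+1})$ (so that, by \eqref{chi} and the finiteness of $\fK$, the $h_j$ pick out the successive levels $K_1<\cdots<K_r$) and that $\sigma^*_{h_i}$ holds. Because $\fK_G$ is finite, this chain of coverings uniquely locates the $i$th level, and $\Phi_{i,\sigma}$ faithfully encodes $G_{K_i}\models\sigma$ by Proposition \ref{primitd}. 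Since $G\equiv H$ we get $H\models\Phi_{i,\sigma}$, and the same finiteness argument applied to $\fK_H$ forces the witnessing irreducible $h'_i$ to lie in $Q_{\Delta'_i}$ for an o-block $\Delta'_i$ of $K'_i$, whence $H_{K'_i}\models\sigma$ by Proposition \ref{primitd} again. Running $\sigma$ and $\neg\sigma$ through this argument yields $G_{K_i}\models\sigma\iff H_{K'_i}\models\sigma$ for every $\sigma$, i.e. $G_{K_i}\equiv H_{K'_i}$.

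I expect the main obstacle to be bookkeeping rather than mathematical depth: one must ensure that the first-order locating formula $\Phi_{i,\sigma}$ genuinely pins down the $i$th level \emph{uniformly on both sides}, exploiting that finiteness of the chain makes the successor/covering relation $\chi$ definable and the enumeration unambiguous. In particular one should check that the witnessing irreducibles produced on the $H$-side really do land in the correct $Q_{\Delta'_i}$; this is where the order-preservation built into Corollary \ref{oprimsent} (or equivalently into the definability of $\eta$ and $\chi$ via Corollary \ref{leKd} and \eqref{chi}) does the work. No new analytic input beyond the already-established interpretability results should be required.
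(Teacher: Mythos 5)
Your overall strategy is the one the paper intends: interpret the spine, observe that ``$\fK$ has exactly $r$ elements'' is a first-order property of the (interpreted) totally ordered set $\fK_G$, transfer it to $\fK_H$, and then pin down the $i$th level by a chain of coverings $\chi(h_j,h_{j+1})$ so that Proposition \ref{primitd} lets you move each sentence $\sigma$ from $G_{K_i}$ to $H_{K'_i}$. The bookkeeping with $\Phi_{i,\sigma}$ is sound: in a totally ordered set with exactly $r$ elements, an $r$-term covering chain in $T$ projects onto all of $\fK$ in order, so the witnessing irreducibles on the $H$-side necessarily land in $Q_{\Delta'_i}$ with $\kappa(\Delta'_i)=K'_i$.

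There is, however, one genuine gap. Both Proposition \ref{primitd} and Corollary \ref{oprimsent} carry the hypothesis that $(G,\Omega)$ is \emph{controlled}, and Corollary \ref{fin} deliberately does not assume this --- in contrast to Corollaries \ref{oprimsame} and \ref{Z}. You invoke both results without checking controlledness, so as written your appeal to $\sigma^*_h$ is unjustified. The missing observation is exactly the point of this corollary: since $\fK_G$ is finite, every non-minimal element of $\fK_G$ has a predecessor, so $(G,\Omega)$ is controlled by Remark \ref{predcont}; once you know $\fK_H$ is finite with $r$ elements, the same remark (or Corollary \ref{theta}(i), since controlledness is first-order) gives that $(H,\Lambda)$ is controlled as well. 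With that sentence added, your argument goes through and coincides with the paper's intended derivation from Theorem \ref{MTd} and the interpretability machinery of Sections 6 and 7.
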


By an easy extension of Lemma \ref{abel} to soluble groups of fixed derived length, we can determine whether $\frak K$ is finite and each o-primitive component is abelian whether or not $(\dagger)$ holds.

\begin{cor} \label{finab}
Let $(G,\Omega)$ and $(H,\Lambda)$ be abundant  transitive $\ell$-permutation groups with every o-primitive component of $(G,\Om)$ abelian. If $\fK_G$ is finite with $r$ elements $K_1<\dots<K_r$ and $H\equiv G$, then $\fK_H$ is finite with $r$ elements $K'_1<\dots<K'_r$, and $G_{K_i}\equiv H_{K'_i}$ for each $i$. 
\end{cor}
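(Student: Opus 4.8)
The plan is to reduce to the situation already handled, by peeling off one abelian level at a time. Since $\fK_G$ is finite with $r$ elements $K_1<\cdots<K_r$, Proposition \ref{cc} forces the convex congruences to form the finite chain $\mathcal E\subset K_1\subset\cdots\subset K_r=\mathcal U$ with $\pi(K_i)=K_{i-1}$, and each section acts through the abelian o-primitive component $G_{K_i}$, so that $G$ is soluble of derived length at most $r$. In particular the minimal element of $T$ exists and its o-primitive component is of type (I); this is precisely the configuration excluded by $(\dagger)$, which is why the double-centralizer machinery of Sections 4 and 5 is unavailable at the bottom level and must be replaced there.

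First I would invoke Lemma \ref{abel}. Because the minimal component is of type (I), there is $h_1\in G_+$ with $[a,b]=1$ whenever $|a|\vee|b|\le h_1$, and for a minimal $\Delta\in T$ with $h_1\in Q_\Delta$ the lemma supplies a first-order definition of $\rst(\Delta)$, an abelian convex sublattice subgroup isomorphic (as an $\ell$-group) to $G_{K_1}$. From this definable family of minimal rigid stabilizers I would define the pointwise stabilizer $N_1$ of the set of minimal o-blocks: by Remark \ref{rd} and abundance, $g\in N_1$ if and only if $\rst(\Delta)^g=\rst(\Delta)$ for every minimal $\Delta$ (distinct minimal blocks having nontrivial rigid stabilizers with disjoint support), which is a first-order condition on $g$. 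One checks that $N_1$ is a convex normal sublattice subgroup, so $\bar G:=G/N_1$ is an $\ell$-group, interpretable in $G$, acting faithfully, abundantly and transitively on $\Omega/K_1$ with spine $\{K_2,\dots,K_r\}$ of size $r-1$ and with all o-primitive components $G_{K_2},\dots,G_{K_r}$ abelian.

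The main work is then an induction on $r$ proving a uniform statement $P(r)$: for each $r$ there is a sentence $\Phi_r$ and, for each $\ell$-group sentence $\sigma$ and each $i\le r$, a sentence $\sigma^{(i)}$, such that every abundant transitive $\ell$-permutation group $G$ with all components abelian and $|\fK_G|=r$ satisfies $\Phi_r$ and satisfies $\sigma^{(i)}$ exactly when $G_{K_i}\models\sigma$, while conversely any abundant transitive $H\models\Phi_r$ has $|\fK_H|=r$ with all components abelian. The base case $r=1$ is o-primitivity of an abelian group, detected by the Lemma \ref{abel} condition together with $\rst(\Delta)=G$, with $\sigma^{(1)}:\equiv\sigma$. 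For the inductive step I would take $\Phi_r$ to be the conjunction of ``a proper minimal type-(I) component exists'' with the $G$-relativization of $\Phi_{r-1}$ for $\bar G$; set $\sigma^{(1)}$ to be $\sigma$ relativized to the definable minimal $\rst(\Delta)$ (legitimate since $\rst(\Delta)\cong G_{K_1}$ and $\rst(\Delta)$ is a sublattice subgroup, so ambient $\cdot,{}^{-1},\vee,\wedge$ compute its own $\ell$-group theory); and for $i\ge 2$ pull back along the interpretation $\bar G=G/N_1$ the sentences supplied by $P(r-1)$, using $(G/N_1)_{K_i/K_1}=G_{K_i}$.

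Granting $P(r)$, the corollary is immediate. We have $G\models\Phi_r$; since $H\equiv G$, also $H\models\Phi_r$, so $\fK_H$ is finite with $r$ elements $K_1'<\cdots<K_r'$ and all its components abelian; and for every $\sigma$ we obtain $G_{K_i}\models\sigma$ iff $G\models\sigma^{(i)}$ iff $H\models\sigma^{(i)}$ iff $H_{K_i'}\models\sigma$, whence $G_{K_i}\equiv H_{K_i'}$. The step needing genuine care — the only content beyond the interpretation bookkeeping of Sections 4--7 — is the $(\dagger)$-free definability of $N_1$ from Lemma \ref{abel} and the verification that $G/N_1$ inherits abundance, transitivity and the abelian-component hypothesis with $\fK$ shortened by one; the fixed bound $r$ on the number of levels (equivalently, the fixed derived length) is exactly what makes the inductive scheme uniform and hence expressible in first-order logic.
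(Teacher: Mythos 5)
Your proposal is correct and follows essentially the route the paper intends: the paper gives no detailed proof of Corollary \ref{finab}, only the remark that it follows ``by an easy extension of Lemma \ref{abel} to soluble groups of fixed derived length,'' i.e.\ by exploiting the fixed finite number of abelian levels to regain first-order definability without $(\dagger)$, which is exactly what you do. Your implementation via the definable kernel $N_1$ of the action on the minimal o-blocks and an interpreted quotient $G/N_1$, iterated $r$ times, is a legitimate and carefully worked-out realisation of that one-line sketch.
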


Suppose that $(G,\Omega)$ is a transitive abundant $\ell$-permutation group that is controlled and does not have a minimal abelian o-primitive component. If $\fK_G=\{K_n\mid n\in \Z_-\}$ where $K_n>K_{n-1}$ for all $n\in \Z_-$, we can determine whether $\mathcal U\in \fK_G$ and determine the `level' of an o-primitive component of $(G,\Om)$:

\begin{cor} \label{Z-}
Let $(G,\Omega)$ and $(H,\Lambda)$ be transitive abundant $\ell$-permutation groups that satisfy $(\dagger)$ and $(\dagger\dagger)$. If $\fK_G=\{K_n\mid n\in \Z_-\}$ where $K_n>K_{n-1}$ for all $n\in \Z_-$ and $G\equiv H$, then $\fK_H=\{K'_n\mid n\in \Z_-\}\cup \fK'$ for some totally ordered set $\fK'$ with $K'_n>K'_{n-1}>K'$ for all $n\in \Z_-$ and $K'\in \fK'$ and $G_{K_n}\equiv H_{K'_n}$ for all $n\in \Z_-$. In particular, if $\fK'=\emptyset$, the first-order theory of the o-primitive component of $(H,\Lambda)$  at level $n$ is equal to that of $(G,\Om)$ at level $n$ for each $n\in \Z$. 
\end{cor}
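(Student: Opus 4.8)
The plan is to reduce everything to the coloured-chain equivalence supplied by Theorem \ref{MTd}, after first checking that its controlledness hypothesis is automatic here. Since $\fK_G=\{K_n\mid n\in\Z_-\}$ with $K_n>K_{n-1}$, the totally ordered set $\fK_G$ has order type $\omega^*$: it has a greatest element $K_{-1}$, no least element, and every element has an immediate predecessor. In particular every non-minimal element of $\fK_G$ has a predecessor, so Remark \ref{predcont} shows that $(G,\Omega)$ is controlled. By Corollary \ref{theta}(i), being controlled is a first-order property, so $(H,\Lambda)$ is controlled as well because $G\equiv H$. Moreover, by Lemma \ref{61} the hypothesis $(\dagger\dagger)$ guarantees that every o-primitive component of $(G,\Omega)$ has irreducible elements, so all hypotheses of Theorem \ref{MTd} are now in force; it yields both $\fK_G\equiv\fK_H$ as totally ordered sets and $\frak C_G\equiv\frak C_H$ as coloured chains.

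Next I would read off the structure of $\fK_H$ from the sentences true in $\omega^*$. The statements ``there is a greatest element'' and ``every element has an immediate predecessor'' are first-order in the language of orders and hold in $\fK_G$, hence in $\fK_H$. Write $K'_{-1}$ for the greatest element of $\fK_H$ and, recursively, $K'_{n-1}$ for the immediate predecessor of $K'_n$; these exist by the predecessor property and give a strictly descending chain $K'_{-1}>K'_{-2}>\cdots$ of order type $\omega^*$ at the top of $\fK_H$. An easy induction shows that the elements lying in the interval $[K'_n,K'_{-1}]$ are exactly $K'_n,\dots,K'_{-1}$, since consecutive terms have no element strictly between them and $K'_{-1}$ is greatest; hence any $y\in\fK_H$ with $y>K'_n$ for some $n$ must itself be one of the $K'_j$. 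Consequently every element of $\fK':=\fK_H\setminus\{K'_n\mid n\in\Z_-\}$ satisfies $y<K'_n$ for all $n$, which gives exactly the claimed decomposition with $K'_n>K'_{n-1}>K'$ for all $n\in\Z_-$ and $K'\in\fK'$.

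Finally I would match the o-primitive components level by level using the colours. For each fixed $n\in\Z_-$ there is a first-order formula $\phi_n(x)$ in the order language asserting that $x$ is obtained from the greatest element by taking immediate predecessors $|n|-1$ times; by the previous paragraph $\phi_n$ defines $K_n$ in $\fK_G$ and $K'_n$ in $\fK_H$, uniquely in each case. Given $\sigma\in\Sigma$, the coloured-chain sentence $(\exists x)(\phi_n(x)\,\&\,P_\sigma(x))$ holds in $\frak C_G$ if and only if $G_{K_n}\models\sigma$, and in $\frak C_H$ if and only if $H_{K'_n}\models\sigma$. Since $\frak C_G\equiv\frak C_H$ the two sides agree, so $G_{K_n}\models\sigma$ if and only if $H_{K'_n}\models\sigma$ for every $\sigma$; that is, $G_{K_n}\equiv H_{K'_n}$. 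The final assertion is the special case $\fK'=\emptyset$, in which $\fK_H$ itself has order type $\omega^*$ and the above matching identifies the first-order theory of the component of $(H,\Lambda)$ at level $n$ with that of $(G,\Omega)$ at level $n$.

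The conceptual heart of the argument is already contained in Theorem \ref{MTd}; once the coloured-chain equivalence is in hand, the remaining work is the standard model theory of the linear order $\omega^*$. I expect the step needing the most care to be the structural analysis of $\fK_H$ in the second paragraph, namely verifying that the iterated predecessors of its maximum form a top copy of $\Z_-$ below which the remainder $\fK'$ lies entirely. It is precisely this rigidity of the top segment that licenses the uniform definability of the $n$-th component from the top, and hence the level-by-level matching that delivers $G_{K_n}\equiv H_{K'_n}$.
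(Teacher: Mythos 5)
Your argument is correct and follows essentially the route the paper intends (the paper leaves the proof implicit, relying on the preceding remarks about detecting $\mathcal U$ and ``levels'' via $\chi$ together with Theorem \ref{MTd} and Proposition \ref{primitd}): you correctly supply the missing verification that $(G,\Omega)$ is controlled via Remark \ref{predcont}, transfer the order type of the top segment through elementary equivalence of the chains, and match components level by level using the colours. No gaps.
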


Finally, we show that the converse of Theorem \ref{MTd} is false. 

\begin{example}
\label{nc}
{\rm Consider the $\ell$-permutation groups $G:=\Wreath \{(\Z,\Z)\mid \Z\}$ and $H:=\wreath \{(\Z,\Z)\mid \Z\}$.  They are both
abundant and transitive. They have  the same spines $\fK=\Z$ of convex congruences every element of which has a predecessor. Hence $G$ and $H$ are controlled. They both satisfy $(\dagger)$ and $(\dagger\dagger)$, and have the same o-primitive components (and so the same associated coloured chain).  Let $K_0\in \fK$.
By taking an element $g\in G$ that is translation by $1$ on every $K_0$-class and is $0$ on every non-$K_0$ o-block,  we see that $G$ satisfies the sentence $(\exists g>1)(\forall h>1)(g\wedge h>1)$.  However $H$ does not satisfy this sentence and so $G\not\equiv H$. }
\end{example}

\section{Applications}

First we illustrate the methods developed above with some applications to $\ell$-groups all of whose o-primitive components are abelian.  

The structure of transitive abelian o-primitive $\ell$-groups is  fully understood. Each such group $A$ is an o-group and either isomorphic to $(\Z,\Z)$ or dense in $\RR$ and determined by its set of Szmielew invariants $\{ \beta(p,A)\mid p\in P\}$, where $P$ is the set of primes and $\beta(p,A(K)):= \min\{ \dim(A/pA),\aleph_0\}$
 (see \cite{RZ} or \cite{Wei}).  Using this fact, we can list some consequences of our results for groups all of whose o-primitive components of $(G,\Omega)$ are abelian.

\begin{cor} \label{opabd} 
Let $(G,\Omega)$ be an abundant transitive  $\ell$-permutation group. If $(G,\Omega)$ is controlled with each o-primitive component abelian and the spine has no minimal element, then the same holds for any abundant transitive $\ell$-permutation group $(H,\Lambda)$ with  $H\equiv G$. Moreover, the ordered sets $\fK_G$ and $\fK_H$ satisfy the same first-order sentences and the o-primitive actions are given by right regular representations of subgroups of $\RR$.  If in addition $G$ has the property that each o-primitive component is isomorphic to $\Z$, then so does $H$.
\end{cor}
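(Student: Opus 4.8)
The plan is to check that $(G,\Omega)$ falls under the transfer machinery of Sections 5--7 and then to push the relevant properties across to $H$. First I would verify that $(G,\Omega)$ satisfies $(\dagger)$ and $(\dagger\dagger)$: since $\fK_G$, and hence its root system $T_G$, has no minimal element, the hypothesis $(\dagger)$ holds vacuously; and since every o-primitive component is abelian it is of type (I) by Proposition~\ref{trichot}, so every positive element of a component is a bump and hence irreducible, giving $(\dagger\dagger)$ and in fact irreducible elements in every $Q_\Delta$ (Lemma~\ref{61}). As $G$ is controlled by hypothesis, Corollaries~\ref{spined}, \ref{oprimsent}, \ref{theta} and \ref{Z} and Theorem~\ref{MTd} all apply to $G$. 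The whole point, however, is that $H$ is only assumed abundant and transitive, so before these results can be invoked for the pair $(G,H)$ I must establish that $H$ itself satisfies $(\dagger)$.

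This is the crux. I would prove it by showing that $T_H$ has no minimal element, so that $(\dagger)$ holds vacuously for $H$ as well. The strategy is to translate ``no minimal element'' into a first-order sentence valid in $G$ and therefore in $H$, and to read it back in $H$. Two ingredients are available. First, Lemma~\ref{abel} supplies the sentence
$$\lambda_I\colon\quad (\exists h_1>1)(\forall a,b)(|a|\vee|b|\le h_1\longrightarrow [a,b]=1),$$
which holds precisely when there is a minimal o-primitive component of type (I); since $G$ has no minimal component at all, $G\not\models\lambda_I$, hence $H\not\models\lambda_I$, and $H$ has no minimal component of type (I). Secondly, Corollary~\ref{Ed} (together with its counterpart under $(\dagger)$) shows that the absence of a minimal element of $T$ is detected by the sentence
$$\nu\colon\quad (\forall h)\big(\cB(h)\rightarrow(\exists h')(\cB(h')\,\&\,\vartheta(h',h)\,\&\,(\exists x\neq1)\gamma(h',x))\big),$$
asserting that every irreducible $h$ strictly dominates a nontrivial irreducible $h'$ in the order given by the double centralizers $\C^2_G(W_{\cdot})$; again $G\models\nu$ because $T_G$ has no minimal element, so $H\models\nu$.

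It remains to argue that $\lambda_I$ and $\nu$ together force $T_H$ to have no minimal element. If $\Delta$ were a minimal element of $T_H$, the component $(G(\Delta),\pi(\Delta))$ would be of type (I), (II) or (III); type (I) is excluded by $\lambda_I$, and types (II) and (III) are non-abelian. The remaining task is to check that a non-abelian minimal component is incompatible with $\nu$: for such a $\Delta$ the double-centralizer calculus $\{x\mid\gamma(h,x)\}=\C^2_H(W_h)$ no longer returns $\rst(\Delta)$ for irreducible $h\in Q_\Delta$ (there being no o-blocks strictly inside $\Delta$ and, in the periodic case, no bounded-support elements to exploit), and one shows that this degeneration is exactly what makes $\nu$ fail at such an $h$. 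Verifying this case analysis---pinning down $\C^2_H(W_h)$ for minimal components of types (II) and (III) and confirming that $\nu$ cannot be rescued---is the principal obstacle; I expect it to absorb most of the work, with the type (I) exclusion via $\lambda_I$ being routine.

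Once $H\models(\dagger)$ is secured, the remaining assertions follow by bookkeeping. Corollary~\ref{spined} gives $H\models(\dagger\dagger)$ and $\fK_G\equiv\fK_H$ as totally ordered sets, so ``no minimal element'', being first-order over orders, passes to $\fK_H$; Corollaries~\ref{theta}(i) and \ref{oprimsent} give that $H$ is controlled; and Theorem~\ref{MTd} then yields $\frak C_G\equiv\frak C_H$ as coloured chains. Applying the coloured-chain equivalence to the colour $P_\sigma$ with $\sigma\colon(\forall f,g)\,[f,g]=1$ and using that $\frak C_G\models(\forall K)P_\sigma(K)$, I get $\frak C_H\models(\forall K)P_\sigma(K)$, so every o-primitive component of $H$ is abelian, hence of type (I), i.e.\ a right regular representation of a subgroup of $\RR$. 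This establishes that ``the same holds for $(H,\Lambda)$'' together with the two ``moreover'' clauses. Finally, if every component of $G$ is cyclic (isomorphic to $\Z$), then $G$ still satisfies all the hypotheses of Corollary~\ref{Z}, which delivers that every component of $H$ is cyclic; being a type-(I) action on a subgroup of $\RR$, such a component is isomorphic to $\Z$ as an ordered set, completing the proof.
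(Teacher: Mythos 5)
Your overall architecture matches the paper's intended derivation: verify $(\dagger)$ and $(\dagger\dagger)$ for $G$ (both hold for the reasons you give), then chain Corollary~\ref{spined}, Corollary~\ref{theta}(i), Theorem~\ref{MTd} (or Corollary~\ref{oprimsame}) and Corollary~\ref{Z} to transfer everything to $H$; the endgame, including the use of the colour $P_\sigma$ for $\sigma\colon(\forall f,g)\,[f,g]=1$ and the reduction of the final assertion to Corollary~\ref{Z}, is exactly what the paper does (it records only that ``the final assertion above holds by Corollary~\ref{Z}''). You are also right that the one non-routine point is that $H$ is assumed only abundant and transitive, so $(\dagger)$ for $H$ must be secured before any of those corollaries can be invoked. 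But that is precisely the step you do not carry out: you explicitly defer ``pinning down $\C^2_H(W_h)$ for minimal components of types (II) and (III) and confirming that $\nu$ cannot be rescued'' as ``the principal obstacle'' which you ``expect to absorb most of the work''. A proposal whose self-identified crux is left as an expectation is not a proof; this is a genuine gap.

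Moreover, the route you sketch for closing it is problematic in two concrete ways. First, the identifications $\{x\mid\gamma(h,x)\}=\C^2_H(W_h)=\rst(\Delta)$ and the reading of $\eta,\vartheta$ as the order on $T_H$ are established in Sections 4--6 only under the standing hypothesis that $\fK_H$ has no minimal element or that $(\dagger)$ holds for $H$ --- exactly what you are trying to prove --- so inferring that $T_H$ has no minimal element from $H\models\nu$ is circular unless you analyse the degenerate minimal non-abelian situation from scratch, which you do not. Second, $\nu$ quantifies only over elements satisfying $\cB$; if $H$ had a minimal o-block $\Delta$ of type (II) or (III) whose $Q_\Delta$ contains no irreducible element (and $(\dagger\dagger)$ for $H$ is not yet available, being itself a consequence of Corollary~\ref{spined}, which needs $(\dagger)$), then $\nu$ says nothing about $\Delta$ and cannot exclude it. The half you do complete --- excluding a minimal type (I) component of $H$ via Lemma~\ref{abel}, whose hypothesis is only ampleness and hence applies to $H$ --- is correct, but it is the easy half.
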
 

The final assertion above holds by Corollary \ref{Z}.

By Corollary \ref{oprimsame} we have

\begin{cor}
\label{sumaut}
Let $\fK$ be an arbitrary totally ordered set with no minimal element.
Suppose that $A$ is a subgroup of $\RR$ and $A_{K}:=A$ for each $K\in \fK$.
Let $(W,\Lambda)$ be the wreath product 
$\wreath\{(A_K,A_K)\mid K\in \fK\}$.
Assume that $\fK'$ is a totally ordered set and that $\{(G_{K'},\Omega_{K'})\mid K'\in \fK'\}$ is a family of  o-primitive $\ell$-permutation groups. Define $(G,\Omega):=\wreath\{(G_{K'},\Omega_{K'})\mid K'\in \fK'\}$.
If $G\equiv W$, then $G_{K'}\equiv A$ for each $K'$ and $\fK',\fK$ are elementarily equivalent as totally ordered sets.
\end{cor}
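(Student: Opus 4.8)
The plan is to read both conclusions off Corollary~\ref{oprimsame} and Corollary~\ref{spined}, taking $W$ for the controlled group and our $G$ for the group denoted $H$ in those statements. Since $A\leq\RR$, every o-primitive component $(A_K,A_K)$ of $W$ is of type~(I); in particular all components of $W$ are isomorphic, and hence elementarily equivalent, to $A$. Once the hypotheses are verified, Corollary~\ref{oprimsame} then gives that every o-primitive component of $G$ is elementarily equivalent to $A$, that is $G_{K'}\equiv A$ for every $K'\in\fK'$, while Corollary~\ref{spined} (a consequence of Theorem~\ref{MTd}) gives $\fK_W\equiv\fK_G$, i.e.\ $\fK\equiv\fK'$, as totally ordered sets.

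Transitivity and abundance of the restricted wreath products $W$ and $G$ are standard. For $W$ both $(\dagger)$ and $(\dagger\dagger)$ hold vacuously: since $\fK$ has no minimal element neither does the root system $T$ of $W$, so the premise of $(\dagger)$ is never met; and as every component is an abelian group of type~(I) there are no o-$2$ transitive components, so $(\dagger\dagger)$ is vacuous too. Every positive element of a type-(I) component is irreducible, so each component of $W$ has irreducible elements, as required in Theorem~\ref{MTd}; and as observed above all these components are elementarily equivalent to $A$.

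The one substantial hypothesis for $W$ is that it is controlled. Because $\fK$ need not have predecessors, Remark~\ref{predcont} is unavailable and the argument must be made directly from the definition of the restricted product. The point is that the restricted support condition excludes the spread-out elements built in Remark~\ref{autcont}: an element $g\in\rst(\Gamma)$ of the restricted product moves points in only finitely many o-blocks below $\Gamma$, so those points pairwise lie in a common $\mathcal C$-class for some $\mathcal C\in\fK$ with $\mathcal C<\kappa(\Gamma)$. Hence $\suppo(g)$ is contained in a single o-block $\Delta'\in T$ with $\Delta'\subset\Gamma$, which is exactly the condition for $W$ to be controlled.

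For $G$ the only hypothesis needing work is $(\dagger)$: abundance and transitivity are clear, while $(\dagger\dagger)$ and the controlledness of $G$ are delivered as conclusions of Corollaries~\ref{spined} and \ref{oprimsame} rather than assumed. Now $(\dagger)$ can fail only through the presence of a minimal element of $T$ that is not o-$2$ transitive, i.e.\ a minimal o-primitive component of type~(I) or type~(III). The existence of such a component is expressible by a first-order sentence not presupposing $(\dagger)$ --- for type~(I) this is Lemma~\ref{abel}, and the periodic case admits an analogous direct criterion. Since $W$ has no minimal o-primitive component whatsoever and $G\equiv W$, the group $G$ has no minimal component of type~(I) or~(III) and hence satisfies $(\dagger)$. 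With all hypotheses in hand, Corollaries~\ref{oprimsame} and \ref{spined} deliver the two conclusions. I expect the main obstacle to be the controlled verification for the restricted product, together with the first-order detection --- valid without assuming $(\dagger)$ --- of minimal periodic components needed to establish $(\dagger)$ for $G$.
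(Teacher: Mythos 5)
Your overall route is the paper's: the result is read off from Corollary \ref{oprimsame} (for the components) together with Corollary \ref{spined} (for the spines), with $W$ in the role of the controlled group and $G$ in the role of $H$. Your verifications for $W$ --- transitivity, abundance, the vacuous truth of $(\dagger)$ and $(\dagger\dagger)$, irreducibility of positive elements in type (I) components, and in particular the direct argument that the restricted wreath product is controlled (which is exactly the point that fails for the unrestricted product $\Wr$ when $\fK$ lacks predecessors; compare the paper's remark following the corollary and Remark \ref{autcont}) --- are correct and supply details that the paper leaves implicit behind the words ``By Corollary \ref{oprimsame} we have''.

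The step that does not work is your derivation of $(\dagger)$ for $G$. First, you assert that a minimal periodic component can be detected by a first-order sentence ``analogous'' to Lemma \ref{abel} without presupposing $(\dagger)$; no such criterion appears in the paper, and it is not clear how to produce one, since the first-order machinery of Sections 4--6 (the formulae $\gamma$, $\eta$, and so on) is developed only under $(\dagger)$. Second, and more seriously, even granting that $G$ has no minimal component of type (I) or (III), $(\dagger)$ does not follow: the observation in Section 5 is the implication that $(\dagger)$ forces any minimal component to be of type (II), not its converse. An o-$2$ transitive minimal component need not contain non-trivial elements supported in a prescribed bounded interval --- this is precisely why $(\dagger)$ is imposed as a hypothesis rather than deduced --- so ``no minimal component of type (I) or (III)'' does not yield $(\dagger)$. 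The paper sidesteps the issue by keeping $(\dagger)$ as a standing hypothesis from Section 5 onwards, so that the group $G$ in this corollary is tacitly assumed to satisfy it; if you wish to prove $(\dagger)$ for $G$ from $G\equiv W$ you need a genuinely different argument, and otherwise you should simply carry $(\dagger)$ as a hypothesis on $G$, as the paper in effect does.
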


When the hypotheses of the above corollary hold and additionally every element of $\fK$ has a predecessor, then the corresponding result holds for Wreath products by the same proof using Remark \ref{predcont}. This immediately implies Corollary \ref{sumauto}.

\begin{cor} \label{ARd}
Let $\fK$ and $\fK'$ be totally ordered index sets and $\{(\Omega_K,\leq_K)\mid K\in \fK\}$ be a family of $2$-homogeneous totally ordered sets. If $\wr\{(\Aut(\Omega_K,\leq_K),\Omega_K)\mid K\in \fK\}\equiv \wr\{(\Aut(\RR,\leq),\RR)\mid K'\in \fK'\}$, then each $(\Omega_K,\leq_K)$ is order-isomorphic to $(\RR,\leq)$ and  $\fK$ and $\fK'$ satisfy the same first-order sentences in the language of ordered sets. The same conclusion holds with $\wr$ replaced by $\Wr$ if every element of $\fK'$ has a predecessor.
\end{cor}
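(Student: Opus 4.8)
The plan is to realise both sides of the hypothesis as abundant transitive $\ell$-permutation groups whose spines and o-primitive components are exactly the data in the statement, and then to feed the pair into the component-recognition machinery of Sections 4--7. Write $G:=\wr\{(\Aut(\Om_K,\leq_K),\Om_K)\mid K\in\fK\}$ and $H:=\wr\{(\Aut(\RR,\leq),\RR)\mid K'\in\fK'\}$. Both are transitive and, being wreath products, abundant; their spines may be identified with $\fK_G=\fK$ and $\fK_H=\fK'$ carrying the given orders, and their o-primitive component at level $K$ (resp.\ $K'$) is $(\Aut(\Om_K,\leq_K),\Om_K)$ (resp.\ $(\Aut(\RR,\leq),\RR)$). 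Since each $\Om_K$ is $2$-homogeneous and $\RR$ is $2$-homogeneous, every component is o-$2$ transitive, i.e.\ of type (II).

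Next I would check that $H$ meets all the standing hypotheses so that it can serve as the reference group. The component $\Aut(\RR,\leq)$ contains bumps, hence irreducible elements, so $H$ satisfies $(\dagger\dagger)$; its minimal component (if $\fK'$ has a least element) is o-$2$ transitive with elements of bounded support, so $H$ satisfies $(\dagger)$. For control I would invoke the construction of the wreath product (\cite[Chapter 5]{G81}): the restricted product $\wr$ is controlled, so $H$ is controlled in the first case; for the unrestricted product $\Wr$ the hypothesis that every element of $\fK'$ has a predecessor makes every non-minimal element of $\fK_H$ have a predecessor, whence $H$ is controlled by Remark \ref{predcont}. In either case $H$ is abundant, transitive, controlled, and satisfies $(\dagger)$ and $(\dagger\dagger)$.

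With $H$ as the reference group I would apply the transfer results. By Theorem \ref{MTd} the spines $\fK_G=\fK$ and $\fK_H=\fK'$ are elementarily equivalent as totally ordered sets, which is the second conclusion. All components of $H$ are isomorphic to $\Aut(\RR,\leq)$ and hence pairwise elementarily equivalent, so Corollary \ref{oprimsame} yields that every o-primitive component of $G$ is elementarily equivalent to $\Aut(\RR,\leq)$; that is, $\Aut(\Om_K,\leq_K)\equiv\Aut(\RR,\leq)$ for every $K\in\fK$. Since $\Om_K$ is $2$-homogeneous the group $\Aut(\Om_K,\leq_K)$ is transitive, so Corollary \ref{reals} forces $\Om_K\cong\RR$ as ordered sets, which is the first conclusion.

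The main obstacle is verifying that $G$ \emph{itself} satisfies the standing hypotheses $(\dagger)$ and $(\dagger\dagger)$, since these are required before Theorem \ref{MTd} and Corollary \ref{oprimsame} may be applied to the pair $(G,H)$. The delicate point is that the full automorphism group of a $2$-homogeneous chain need not possess irreducible elements (bumps), so one must argue that such pathological components cannot occur under the hypothesis $G\equiv H$ (cf.\ the failure of $(\dagger\dagger)$ discussed before Lemma \ref{61} and the example in \cite{GMacP}). When $\fK$ has no least element $(\dagger)$ holds vacuously, $(\dagger\dagger)$ then transfers from $H$ to $G$ by Corollary \ref{spined}, and the argument above excludes pathology a posteriori through $\Om_K\cong\RR$; the genuinely troublesome case is a minimal pathological component of $G$, which must be ruled out directly. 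The control dichotomy between $\wr$ and $\Wr$, governed by the predecessor condition of Remarks \ref{predcont} and \ref{autcont}, is the remaining and more routine technical point.
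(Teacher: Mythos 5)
Your route is the one the paper intends (it gives no separate proof of this corollary, deriving it from Corollary \ref{oprimsame}, Corollary \ref{spined}/Theorem \ref{MTd}, Remark \ref{predcont} for the $\Wr$ case, and then Theorem \ref{GH}), and the deductions you chain together are the right ones. The only substantive issue is that the ``main obstacle'' you leave open at the end is not actually an obstacle: the example of \cite{GMacP} concerns a \emph{proper} $\ell$-subgroup of the automorphism group of a chain, whereas your components are the \emph{full} groups $\Aut(\Omega_K,\leq_K)$ of $2$-homogeneous chains. Such a group is closed under depressing an element to a single supporting interval, and by patching two restrictions of global automorphisms one obtains, for any $\alpha<\beta<\gamma<\delta$ in $\Omega_K$, a non-trivial element supported in $(\alpha,\delta)$; its bump at $\beta$ is irreducible. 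This yields $(\dagger)$ for $G$ (elements of arbitrarily small bounded support in a minimal component) and $(\dagger\dagger)$ for $G$ directly, which is exactly the blanket assertion the paper makes in the introduction that all o-primitive components of full automorphism groups have irreducible elements; no a posteriori exclusion of ``pathological'' components is needed. With that observation inserted, your argument is complete and agrees with the paper's.
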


 All proper convex congruences belong to $\fK$ if the root system $T$ is a (path-connected) tree in the traditional sense; that is, if between any two distinct $\Delta_1,\Delta_2\in T$, there is a unique {\em finite} path between $\Delta_1$ and $\Delta_2$ without loops. In this case, $\fK$ is order-isomorphic to $\Z_+$, $\Z_-$, $\Z$ or a finite totally ordered set. By Proposition \ref{spined}, each of these possibilities can be distinguished by sets of sentences in the first-order language of $G$ if $(G,\Omega)$ satisfies $(\dagger)$ and $(\dagger\dagger)$. 

\begin{cor} Let $(G,\Omega)$ and $(H,\Lambda)$ be abundant transitive $\ell$-permutation groups that satisfy $(\dagger)$ and $(\dagger\dagger)$ with  $(G,\Omega)$  controlled. Assume further that $T_G$ and $T_H$ are path-connected trees. If $G\equiv H$, then the totally ordered sets $\fK_G$ and $\fK_H$ are order-isomorphic and the corresponding o-primitive components of $G$ and $H$ are elementarily equivalent.
\end{cor}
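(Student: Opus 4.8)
The plan is to derive this corollary directly from the machinery already assembled, treating it as a specialization of Theorem \ref{MTd} together with the structural dichotomy for path-connected trees stated immediately above. First I would observe that both $(G,\Omega)$ and $(H,\Lambda)$ satisfy all the hypotheses of Theorem \ref{MTd}: they are abundant transitive $\ell$-permutation groups satisfying $(\dagger)$ and $(\dagger\dagger)$, with $(G,\Omega)$ controlled, so Theorem \ref{MTd} applies and yields that $\fK_G$ and $\fK_H$ are elementarily equivalent as totally ordered sets, that $(H,\Lambda)$ is controlled, and that the associated coloured chains $\frak C_G$ and $\frak C_H$ are elementarily equivalent.

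Next I would exploit the tree hypothesis to upgrade elementary equivalence of $\fK_G$ and $\fK_H$ to honest order-isomorphism. The paragraph preceding the statement records that when $T$ is a path-connected tree, every proper convex congruence lies in $\fK$, and consequently $\fK$ is order-isomorphic to one of $\Z_+$, $\Z_-$, $\Z$, or a finite chain. The key point is that each of these four order types is first-order distinguishable, by detecting the existence of predecessors and successors and of least or greatest elements via Corollary \ref{lc2d} and the formula $\chi$ of (\ref{chi}). So from $\fK_G\equiv \fK_H$ and the fact that $T_G$, $T_H$ are both trees, I would conclude that $\fK_G$ and $\fK_H$ have the same order type among these four possibilities; since any two totally ordered sets of the same type from this list are order-isomorphic, this gives $\fK_G\cong \fK_H$ as ordered sets. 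The correspondence $K\mapsto K'$ is then canonical (matching the $n$th element from the appropriate end), and I would use Corollary \ref{fin} for the finite case and Corollary \ref{Z-} (or its evident $\Z_+$ and $\Z$ analogues) to line up corresponding levels in the infinite cases.

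Finally I would verify that corresponding o-primitive components are elementarily equivalent. Once the order-isomorphism $\fK_G\cong\fK_H$ is fixed and each element of $\fK_G$ is matched with a definite element of $\fK_H$ at the same position, elementary equivalence of the coloured chains $\frak C_G\equiv\frak C_H$ transfers the colour-data pointwise: for each sentence $\sigma\in\Sigma$ and each matched pair $(K,K')$, the predicate $P_\sigma$ holds at $K$ in $\frak C_G$ exactly when it holds at the corresponding element in $\frak C_H$, which by definition of the coloured chain means $G_K\models\sigma$ if and only if $H_{K'}\models\sigma$. Hence $G_K\equiv H_{K'}$ for corresponding $K$, $K'$, which is the second assertion.

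The main obstacle I anticipate is the step from elementary equivalence to order-isomorphism of the spines. Elementary equivalence of totally ordered sets is in general far weaker than isomorphism, so this step genuinely relies on the tree hypothesis collapsing the possible order types to the short list $\Z_+,\Z_-,\Z$, and a finite chain, and on each of these being first-order separable. I would take care to confirm that the definability results of Section 6 (Corollary \ref{lc2d} and (\ref{chi})) suffice to distinguish these four types inside the elementary theory of $\fK_G$, since that separability is exactly what converts $\fK_G\equiv\fK_H$ into $\fK_G\cong\fK_H$; matching corresponding levels thereafter is then routine bookkeeping via Corollaries \ref{fin} and \ref{Z-}.
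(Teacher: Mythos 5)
Your proposal is correct and follows essentially the same route the paper intends: apply Theorem \ref{MTd} to get elementary equivalence of the spines and coloured chains, use the tree hypothesis (as in the paragraph preceding the corollary) to confine both spines to the first-order-separable list $\Z_+$, $\Z_-$, $\Z$, finite chain, deduce order-isomorphism, and then transfer the colours level by level via Corollaries \ref{fin} and \ref{Z-}. No gaps.
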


Hence we obtain

\begin{cor} Let $(\Omega,\leq)$ be the totally ordered set $\prod_{n=-1}^{-\infty} \RR$ ordered lexicographically. Let $(\Lambda,\leq)$ be a totally ordered set with $\Aut(\Lambda,\leq)$ transitive.  If $\Aut(\Lambda,\leq)\equiv \Aut(\Omega,\leq)$, then $\fK_{\Aut(\Lambda,\leq)}\equiv \Z_-$ and every  o-primitive component $\Aut(\Lambda,\leq)$ is $(\Aut(\RR,\leq),\RR)$. If, additionally, $T_{\Aut(\Lambda,\leq)}$ is path-connected, then $\fK_{\Aut(\Lambda,\leq)}$ is order-isomorphic to $\Z_-$.
\end{cor}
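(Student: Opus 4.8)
The plan is to derive this corollary by specializing the preceding corollary (the one about path-connected trees) together with the structural analysis of the lexicographic product $\Omega=\prod_{n=-1}^{-\infty}\RR$. First I would verify that $(\Aut(\Omega,\leq),\Omega)$ falls under the hypotheses of the tree corollary: since $\Aut(\Omega,\leq)$ is transitive on $\Omega$, the group is abundant and all its o-primitive components have irreducible elements, so $(\dagger)$ and $(\dagger\dagger)$ hold; controlledness follows from Remark \ref{autcont} once I check that every non-minimal element of $\fK_{\Aut(\Omega,\leq)}$ has a predecessor. The key combinatorial observation is that for the lexicographic product indexed by $\Z_-$, the convex congruences correspond exactly to the coordinates $n\in\Z_-$: collapsing all coordinates below level $n$ gives a convex congruence, these are linearly ordered as $\Z_-$, and each covering pair has an o-primitive component equal to $(\Aut(\RR,\leq),\RR)$. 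Thus $\fK_{\Aut(\Omega,\leq)}\cong\Z_-$ and $T_{\Aut(\Omega,\leq)}$ is a path-connected tree, so the tree corollary applies.

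Next I would invoke the tree corollary with $(G,\Omega)=(\Aut(\Omega,\leq),\Omega)$ and $(H,\Lambda)=(\Aut(\Lambda,\leq),\Lambda)$. The hypothesis $\Aut(\Lambda,\leq)\equiv\Aut(\Omega,\leq)$ gives $H\equiv G$, and transitivity of $\Aut(\Lambda,\leq)$ makes $(H,\Lambda)$ abundant, satisfying $(\dagger)$ and $(\dagger\dagger)$. By Theorem \ref{MTd} (equivalently Corollary \ref{spined}) the spines $\fK_{\Aut(\Omega,\leq)}$ and $\fK_{\Aut(\Lambda,\leq)}$ are elementarily equivalent as totally ordered sets, so $\fK_{\Aut(\Lambda,\leq)}\equiv\Z_-$. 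For the o-primitive components I would use Corollary \ref{oprimsame}: since all o-primitive components of $(\Aut(\Omega,\leq),\Omega)$ are equal to $(\Aut(\RR,\leq),\RR)$ and hence pairwise elementarily equivalent, the same holds for every o-primitive component of $\Aut(\Lambda,\leq)$, each being elementarily equivalent to $(\Aut(\RR,\leq),\RR)$. To upgrade elementary equivalence to an actual isomorphism with $(\Aut(\RR,\leq),\RR)$, I would apply Corollary \ref{opa} together with Theorem \ref{GH} (as in the argument deducing Corollary \ref{reals}): an o-primitive component elementarily equivalent to $\Aut(\RR,\leq)$ is o-$2$ transitive, hence its underlying chain is order-isomorphic to $\RR$ by Gurevich--Holland.

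For the final sentence, the additional hypothesis that $T_{\Aut(\Lambda,\leq)}$ is path-connected lets me apply the \emph{full} tree corollary rather than merely the elementary-equivalence statement: when $T_H$ is a path-connected tree, $\fK_H$ is order-isomorphic to one of $\Z_+,\Z_-,\Z$ or a finite chain, and elementary equivalence to $\Z_-$ forces the order-isomorphism type to be $\Z_-$ itself. So $\fK_{\Aut(\Lambda,\leq)}$ is order-isomorphic to $\Z_-$, and the corresponding o-primitive components are order-isomorphic to $(\Aut(\RR,\leq),\RR)$ coordinate by coordinate.

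The main obstacle I anticipate is the bookkeeping for the lexicographic product at the start: one must confirm that the convex congruences of $\Aut(\Omega,\leq)$ are precisely the coordinate-collapsing congruences (so that $\fK\cong\Z_-$ with \emph{every} element having a predecessor, which is what Remark \ref{autcont} needs for controlledness), and that each o-primitive component is genuinely $(\Aut(\RR,\leq),\RR)$ rather than some proper subgroup. This requires knowing that $\Aut$ of a lexicographic product over $\Z_-$ acts with full $\Aut(\RR,\leq)$ on each fibre, which uses the abundance/depression construction $\dep(g,\Delta)$ from Section 2 to realize fibrewise automorphisms inside the global automorphism group. Everything after this setup is a direct citation of the tree corollary, Theorem \ref{MTd}, Corollary \ref{oprimsame}, and Theorem \ref{GH}.
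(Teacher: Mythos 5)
Your argument is essentially the paper's: the corollary is stated there as an immediate consequence (``Hence we obtain'') of the preceding path-connected-tree corollary together with Theorem \ref{MTd}, Corollary \ref{oprimsame} and the Gurevich--Holland upgrade via Corollary \ref{opa}, which is exactly the chain you assemble after computing that the spine of $\Aut(\Omega,\leq)$ is $\Z_-$ with all components $(\Aut(\RR,\leq),\RR)$. One small citation slip: controlledness via ``every non-minimal element of $\fK$ has a predecessor'' is Remark \ref{predcont}, not Remark \ref{autcont} (which is the converse implication for full automorphism groups).
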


\noindent {\bf Acknowledgment.} {\rm This research was begun when the second author was the Leibniz Professor at the University of Leipzig. Both authors are most grateful to the Research Academy, Leipzig and the Leibniz Program of the University of Leipzig for funding a visit by the first author that made this research possible.}
\bigskip

\bigskip

\noindent A. M. W. Glass,

\noindent QUEENS' COLLEGE, CAMBRIDGE CB3 9ET,
U.K.\medskip

E-mail: amwg@dpmms.cam.ac.uk \bigskip

\noindent John S. Wilson,

\noindent MATHEMATICAL INSTITUTE, ANDREW WILES BUILDING, WOODSTOCK ROAD, OXFORD OX2 6GG, ENGLAND

E-mail: John.Wilson@maths.ox.ac.uk

\end{document}